\DeclareMathOperator{\Fix}{Fix}
\DeclareMathOperator{\Id}{Id}
\DeclareMathOperator{\Cov}{Cov}
\DeclareMathOperator{\Var}{Var}
\DeclareMathOperator{\diam}{diam}
\DeclareMathOperator{\Tr}{Tr}
\DeclareMathOperator{\Aut}{Aut}
\DeclareMathOperator{\Neg}{Neg}
\newcommand{\ds}{\displaystyle}
\newcommand{\eps}{\varepsilon}
\newcommand{\bigma}{\boldsymbol{\sigma}}
\newcommand{\bi}{\boldsymbol{\pi}}
\newcommand{\llb}{\llbracket}
\newcommand{\rrb}{\rrbracket}
\newcommand{\phe}{\varphi}
\newcommand{\prt}{\mathcal{Z}}
\newcommand{\dsum}{\ds\sum}
\newcommand{\1}{\mathbbm{1}}
\newcommand{\E}{\mathbb{E}}
\newcommand{\R}{\mathbb{R}}
\newcommand{\mc}{\mathcal}
\newcommand{\Proba}{\mathbb{P}}
\newcommand{\toPr}{\underset{\Proba}{\longrightarrow}}
\newcommand{\defeq}{\underset{\text{def}}{=}}
\newcommand{\qed}{\hfill $\Box$}
\newtheorem{conjecture}{Conjecture}
\shorttitle{The feasibility of multi-graph alignment} 
\begin{document}

\title{The feasibility of multi-graph alignment: a Bayesian approach} 

\authorone[INRIA, DMA/ENS]{Louis Vassaux} 
\authortwo[INRIA, DI/ENS]{Laurent Massoulié}

\address{INRIA Paris, PSL Research University, Paris, France} 
\emailone{louis.vassaux@inria.fr} 

\emailtwo{laurent.massoulie@inria.fr}

\begin{abstract}
We establish thresholds for the feasibility of random multi-graph alignment in two models. In the Gaussian model, we demonstrate an "all-or-nothing" phenomenon: above a critical threshold, exact alignment is achievable with high probability, while below it, even partial alignment is statistically impossible. In the sparse Erdős–Rényi model, we rigorously identify a threshold below which no meaningful partial alignment is possible and conjecture that above this threshold, partial alignment can be achieved. To prove these results, we develop a general Bayesian estimation framework over metric spaces, which provides insight into a broader class of high-dimensional statistical problems.
\end{abstract}

\keywords{graph theory, statistical physics, high-dimensional statistics}

\ams{05C80, 62F15}{60C05, 82B05}    

\section{Introduction}

\subsection{The graph alignment problem}

There are various statistical problems where we observe a network of linked objects which can be represented by a large graph; for instance, a network of protein-protein interactions or a social network. The \textbf{graph alignment problem}, in a broad sense, is the following: we observe the same underlying data multiple times, obtaining $p \geq 2$ \textbf{unlabelled} graphs which should be very similar, and attempt to retrieve the underlying isomorphism which pairs up any two nodes representing the same element. 

Formally, the problem can be described as follows. Assume that we have an underlying $p$-tuple of labelled random graphs $(G^{(1)}, \ldots, G^{(p)})$ with vertex set $\llb 1,n \rrb$. We select $\pi^* = (\pi_1^* = \Id, \pi_2^*, \ldots, \pi_p^*)$ with $\pi_2^* , \ldots, \pi_p^* $ chosen i.i.d,  uniformly at random, over $\mathcal{S}_n$; we observe 

\begin{equation} \label{defipi}
    (\bi^*)^{-1}(G) \defeq (G_{(\pi_1^*)^{-1}(e)}^{(1)})_{e \in E}, \ldots, (G_{(\pi_p^*)^{-1}(e)}^{(p)})_{e \in E}
\end{equation}

where $E$ is the set of edges in the complete graph on $\llb 1, n \rrb$, and $\mathcal{S}_n$ acts on $E$ via $\sigma(\{ u, v \}) \defeq \{ \sigma(u), \sigma(v) \}$. Our aim is to recover $\pi_2^*, \ldots, \pi_p^*$.

Typically, in order to analyse the problem for "typical" values of $G$, we assume that $G$ is also a random variable, such that the $(G_e^{(i)})_{1 \leq i \leq p}$ are correlated for any fixed $e \in E$. Two important examples are the following models:
\begin{itemize}
    \item the \textbf{Gaussian model} $\mathcal{N}(n, \rho)$, where $G^{(1)}, \ldots, G^{(p)}$ are weighted graphs: the weights $(G_e^{(i)})_{1 \leq i \leq p, e \in E}$ are drawn as a centred normal vector over $\R^E \otimes \R^p$, with the following covariance matrix:

    \begin{equation} \label{covdef}
        \Cov(G_{e}^{(i)}, G_{e'}^{(j)}) = 
        \begin{cases}
            1 & \text{ if } e=e', i=j \\
            \rho \in [0, 1] & \text{ if } e=e', i \neq j \\
            0 & \text{ otherwise}
            
        \end{cases}
\end{equation}
\item the \textbf{(correlated) Erdős–Rényi model} $\mathcal{G}(n, \lambda, s)$, in which $G^{(1)}, \ldots, G^{(p)}$ are unweighted graphs: we sample a "master graph" $G^{(0)}$ from the standard Erdős–Rényi model $\mathcal{G}(n, \frac{\lambda}{n})$ (for some $\lambda > 0$); then, the $(G^{(i)})$ are subgraphs of $G^{(0)}$, which are mutually independent conditional on $G^{(0)}$, and which contain any given edge of $G^{(0)}$ with a (fixed) probability $0 < s < 1$.
\end{itemize}

\subsection{Previous works}
The graph alignment problem has been studied extensively in the $2$-graph case, particularly for the two toy models described above. Much attention has been given to finding algorithms which, with high probability, recover the underlying permutation in reasonable time (\cite{relax},  \cite{feizi2017}, \cite{degrees}, \cite{spectral1}, \cite{spectral2}, \cite{spectral3}, \cite{chandeliers}, \cite{eralgo}, \cite{geom2}). These algorithms, in turn, can be used in various applications: for instance, de-anonymisation problems (\cite{deanom}), natural language processing (\cite{nlp}), or the analysis of protein interaction networks (\cite{protein}).

Another line of research consists in the information-theoretical analysis of the alignment problem, i.e. understanding when alignment is feasible (without regards to time complexity). Again, much progress has been made on the subject (\cite{exact}, \cite{almostexact}, \cite{partialimposs}, \cite{threshold}, \cite{lucagauss}, \cite{settling}) and we have a good understanding of the cutoffs for alignment feasibility. Note that, often enough, there is a real difference between feasibility and polynomial-time feasibility: there can be a "hard phase" where alignment is feasible, but likely not in polynomial time (\cite{gap}).

When there are more than two graphs, the problem has been less extensively studied. Several studies have focused on developing algorithms (\cite{multi3}, \cite{multi2}, \cite{multi1}); a few studies (\cite{exacter}, \cite{newah1}) have also looked at the information-theoretical side of this problem. Here, we pursue this information-theoretical line of inquiry; \cite{newah2} is the most closely related to our present work, and we will contrast its contributions with ours right after the statement of Theorem \ref{mainthm} below. 
\subsection{Contributions}

Let us state the main results of this paper. {If $\bi, \bi' \in \{ \Id \} \times \mc{S}_n^{p-1}$, we will define 

\begin{equation} \label{overlap}
    ov(\bi, \bi') = \frac{1}{n} \sum_{i=1}^n \1_{\pi_2(i) = \pi'_2(i), \ldots, \pi_p(i) = \pi'_p(i)}.
\end{equation}
}
We begin by examining whether alignment is possible for the Gaussian model.
\begin{theorem}\label{mainthm}
    Let $c_p = \dfrac{8}{p}$; there is a cutoff for graph alignment at $\rho_0 = \sqrt{\dfrac{c_p \log n}{n}}$.

Specifically, for any $\eps > 0$:
\begin{itemize}
    \item if $\rho \geq (1 + \eps) \rho_0$, there exists an estimator $\hat{\bi}$ which is equal to the underlying permutation $\bi^*$ with high probability;
    \item if $\rho \leq (1 - \eps) \rho_0$, partial alignment is intractable: for any estimator $\hat{\bi}$ of $\bi^*$, $ov(\hat{\bi}, \bi^*) = o_{\Proba}(1)$.
\end{itemize}
\end{theorem}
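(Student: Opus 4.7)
The plan is a matched pair of arguments: achievability via the MAP estimator, and impossibility via the Bayesian framework from the abstract combined with a second-moment computation.

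For achievability, I would first expand the Gaussian log-likelihood and observe that, up to $\bi$-independent terms, it reduces to $L(\bi):=\sum_{e\in E}\bigl(\sum_{i=1}^p \tilde{G}^{(i)}_{\pi_i(e)}\bigr)^{2}$, so the MAP estimator is $\hat{\bi}=\argmax L$. The task reduces to union-bounding $\Proba(L(\bi)\ge L(\bi^*))$ over $\bi\ne\bi^*$. Writing $\sigma_i:=(\pi_i^*)^{-1}\pi_i$, the dominant case is a single transposition $(a,b)$ in one coordinate $\sigma_i$, for which a direct computation yields
\[ L(\bi^*)-L(\bi)\;=\;2\sum_{v\ne a,b}\,D^{(i)}_v\sum_{j\ne i}D^{(j)}_v,\qquad D^{(j)}_v:=G^{(j)}_{\{a,v\}}-G^{(j)}_{\{b,v\}}, \]
a quadratic Gaussian with mean $4(n-2)(p-1)\rho$ and variance $16(n-2)(p-1)(1+o(1))$. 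A Hanson--Wright tail bound then gives failure probability $\exp\bigl(-(1+o(1))(n-2)(p-1)\rho^{2}/2\bigr)$, which beats the $\sim pn^{2}/2$ union bound over swaps as soon as $\rho^{2}\ge 8(1+\eps)\log n/(pn)$. Alternatives where several $\sigma_j$'s are perturbed simultaneously, or where some $\sigma_i$ has support of size $k\gg 1$, admit a parallel analysis: the gap's mean and variance both scale linearly with total support, so the Gaussian exponent $\Theta(k(p-1)\log n)$ dominates the entropy $\log\binom{n}{k}^{p-1}$, in the spirit of \cite{exact,exacter}.

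For impossibility, I would invoke the Bayesian metric-space framework developed in the paper: if two i.i.d.\ samples $\bi^{(1)},\bi^{(2)}$ from the posterior satisfy $\E[ov(\bi^{(1)},\bi^{(2)})]=o(1)$, then a Cauchy--Schwarz/Nishimori argument (via the permutation-matrix representation of overlap) gives $\E[ov(\hat{\bi},\bi^*)]^{2}\le \E[ov(\bi^{(1)},\bi^{(2)})]=o(1)$ for any estimator $\hat{\bi}$, whence $ov(\hat{\bi},\bi^*)=o_\Proba(1)$ by Markov. To establish the posterior inter-overlap bound I would apply the second moment method to the null-normalized partition function $Z(\tilde{G}):=(n!)^{-(p-1)}\sum_{\bi}p(\tilde{G}|\bi)/p(\tilde{G}|\bi^*)$. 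A Gaussian integral yields, for each $\sigma=(\bi^*)^{-1}\bi$,
\[ \E_{\bi^*}\!\Bigl[\frac{p(\tilde{G}|\bi)^{2}}{p(\tilde{G}|\bi^*)^{2}}\Bigr]\;=\;\exp\bigl(c\,\rho^{2}\,F(\sigma)\bigr), \]
where $F(\sigma)$ is a combinatorial quantity capturing the common-orbit structure of $(\sigma_i)_i$. Summing over $\sigma$ and comparing with $\E[Z]=1$, the regime $\rho^{2}\le 8(1-\eps)\log n/(pn)$ forces the exponential correction to be dominated by the combinatorial entropy for every nontrivial $\sigma$, giving $\E[Z^{2}]\le(1+o(1))\E[Z]^{2}$ and hence posterior contiguity to the uniform measure.

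The hardest step is the second moment bound: the space $(\mathcal{S}_n)^{p-1}$ is vast and, for $p\ge 3$, different $\sigma_i$'s can share supports, producing cross-terms in $F(\sigma)$ whose precise evaluation fixes the constant $c_p=8/p$. The metric-space formalism is what allows one to organize the sum by a meaningful ``distance to $\bi^*$'' and separate combinatorial entropy from the Gaussian exponent without accumulating losses.
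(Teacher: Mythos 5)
Your achievability sketch is broadly aligned with the paper's: the paper also bounds, to first order, the probability that some $\bigma\neq\bi^*$ has smaller energy (Propositions \ref{energy} and \ref{exact} are a systematic version of your union bound), and the single-transposition exponent you compute does correctly locate the constant. However, your entropy estimate ``$\log\binom{n}{k}^{p-1}$'' for a perturbation of total support $k$ is the weak point: for $p\ge 3$ the coordinates $\sigma_i$ interact, and the number of $\bigma$ with prescribed pairwise agreement pattern $(d_{ij})$ is \emph{not} a product of binomials. The paper needs a genuine counting result (Theorem \ref{permcount}, obtained by a clever probabilistic averaging over orderings $\tau\in\mathcal{S}_p$) to pin down $F((d_{ij}))\le (n!)^{p-1}\prod_{i\neq j}(d_{ij}!)^{-1/p}$ and keep the constant $c_p=8/p$ tight; your sketch waves past exactly this.

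The impossibility direction is where the genuine gap lies. You propose a vanilla second-moment/contiguity argument: bound $\E[Z^2]$ by $(1+o(1))\E[Z]^2$, conclude contiguity, then use a Nishimori--Cauchy--Schwarz step. The problem is that the untruncated $\E[Z^2]$ does \emph{not} stay bounded up to the threshold. A back-of-envelope cumulant computation (using the same $\Tr((\Sigma M'_{\bigma})^2)\approx 2\sum_{i\neq j}\binom{n}{2}$ that the paper uses) gives, for generic $\bigma$, $\log\E[e^{-2\beta V(\bigma)}]\approx 4(1+\eta)(p-1)n\log n$; summing the near-diagonal pairs $\bi=\bi'$ alone already forces $\E[Z^2]$ to blow up at order $e^{(3+4\eta)(p-1)n\log n}$, i.e.\ the raw second moment diverges for every $\eta>-3/4$, well before the claimed threshold $\eta=0$. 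This is exactly why the paper does \emph{not} use plain contiguity. Instead it works within the metric-estimation framework of Corollary \ref{estimation}, showing directly that $\max_{\bi}\Proba_{post}(B(\bi,r))\to 0$: the posterior mass of a ball, $Z_r(\bi)$, is controlled by a \emph{truncated} first moment $\E[Z_r\mathbbm{1}_{\mathcal{C}}]$ (Lemma \ref{weaklemma}, Proposition \ref{energy}); and the normalisation $Z$ is lower-bounded by combining Gaussian--Lipschitz concentration of $\log Z$ with Paley--Zygmund applied to a \emph{restricted} count $N$ (only $\bigma$ with all $d_{ij}\le 2$, and only after conditioning on the good event $\mathcal{C}$), in Lemma \ref{bruh}. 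The conditioning/restriction are precisely what kill the near-diagonal divergence, so this is a conditional second moment, not the plain one you propose.

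Finally, you correctly flag that the ``cross-terms in $F(\sigma)$'' for $p\ge 3$ are the hardest step, but you offer no mechanism for them. The paper's resolution is the combination of Theorem \ref{permcount} with the separate spanning-tree inequality, Theorem \ref{spanthm}: for every configuration $(d_{ij})$ there exists an ordering $\tau$ whose greedy-spanning-tree weight dominates a rational function of $\frac{1}{p(p-1)}\sum d_{ij}^2$, which is exactly what makes the entropy term beat the Gaussian exponent uniformly in the $\E[N^2]$ bound. Without an analogue of this (or some other argument of comparable strength), the second moment cannot be closed, so your plan as written does not go through.
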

As in the $2$-graph case (see \cite{settling}), we observe an "all-or-nothing" phenomenon: either exact alignment is possible, or partial alignment is intractable. This is a broad phenomenon, which has been observed for many classes of dense models; we thus see that it still holds for $p \geq 3$ graphs.

 Following this document's prepublication, the authors were made aware of the concurrent study \cite{newah2} studying the closely related {detection problem} for the Gaussian case (i.e. determining whether the graphs are correlated); it shows that detection is possible above the threshold $\rho_0$, and is impossible underneath the threshold $\frac{\rho_0}{2}$. The second part of our theorem suggests that the true threshold for this problem may also be $\rho_0$, but this is not immediately clear.

We will also formulate a similar statement for the Erdős–Rényi model.
\begin{theorem} \label{erthm}
    Assume that $\lambda s(1 - (1 - s)^{p-1}) < 1$. Then, partial alignment is intractable.
\end{theorem}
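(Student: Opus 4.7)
The plan is to apply the general Bayesian impossibility framework developed earlier in the paper, combined with a sparse local analysis of the observed multi-graph that isolates the subcritical branching process implicit in the hypothesis.

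First, I would reduce the global overlap statement to a one-point posterior statement. By Markov's inequality applied to $ov(\hat{\bi}, \bi^*)$, it suffices to bound the expected overlap; by Bayes-optimality, this is in turn controlled by the one-point quantities $\E\left[\max_{w \in \llb 1, n \rrb} \Proba\left(\pi_j^*(v) = w \mid G^{(1)}, \ldots, G^{(p)}\right)\right]$ for $j \geq 2$ and $v$ uniform in $\llb 1, n \rrb$. It therefore suffices to prove that the posterior $\mathrm{Law}(\pi_2^*(v) \mid G^{(1)}, \ldots, G^{(p)})$ is non-concentrated on a single vertex of graph $2$, with probability bounded away from zero over the observations and $v$.

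Second, I would carry out the local analysis. Around a uniform root $v$, the observed $p$-tuple converges in the local weak sense to the following joint tree: a Poisson$(\lambda)$ Galton--Watson master tree, each of whose edges is independently marked with a subset $S \subseteq \llb 1, p \rrb$ where $\Proba(i \in S) = s$ independently across $i$; the observation $G^{(i)}$ is the subtree of edges having $i \in S$, viewed in a labeling drawn by a uniform permutation $\pi_i^*$. From $v$'s viewpoint in $G^{(1)}$, a neighbor $u$ carries usable information for locating $\pi_2^*(v)$ only if the edge $\{v,u\}$ belongs to at least one of the other graphs as well, for otherwise $u$ cannot be anchored in $G^{(2)}$, even indirectly through the remaining graphs. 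The resulting \emph{useful subtree} rooted at $v$ is itself a Poisson Galton--Watson process with offspring mean $\mu = \lambda s(1 - (1-s)^{p-1}) < 1$, hence subcritical. With probability $e^{-\mu} > 0$ it reduces to $\{v\}$; on this event a symmetry argument over the vertices of graph $2$ compatible with the observations shows that the posterior on $\pi_2^*(v)$ is spread over a diverging number of candidates, giving the required non-concentration.

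The main difficulty is justifying the "no indirect signal" clause: even when the useful subtree at $v$ is trivial, one could fear that chains of observed edges through $G^{(3)}, \ldots, G^{(p)}$ might bootstrap a non-trivial estimation of $\pi_2^*(v)$. The identification of $\mu = \lambda s(1-(1-s)^{p-1})$, rather than the naïve pairwise constant $\lambda s^2$, is precisely what rules this out: an observed neighbor of $v$ in $G^{(1)}$ can in principle be anchored via \emph{any} of the $p-1$ other graphs, which is why the factor $1 - (1-s)^{p-1}$ replaces $s$ in the branching constant, and subcriticality of this (maximal) process closes off every indirect pathway at once. Once this branching constant is shown to govern the relevant local structure and the one-point non-concentration is fed into the general Bayesian framework from the earlier sections, the desired bound $ov(\hat{\bi}, \bi^*) = o_\Proba(1)$ follows.
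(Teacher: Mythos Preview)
Your approach diverges substantially from the paper's and, as written, contains a genuine gap.

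\textbf{The gap.} Your reduction in the first step is too weak for the target statement. The theorem asserts that \emph{partial} alignment is intractable, i.e.\ $ov(\hat{\bi},\bi^*)=o_\Proba(1)$ for every estimator. Via the one-point reduction you invoke, this requires
\[
\frac{1}{n}\sum_{v}\max_{w}\Proba\bigl(\pi_2^*(v)=w\mid G^{(1)},\ldots,G^{(p)}\bigr)=o_\Proba(1),
\]
which means the one-point posterior must be non-concentrated for a $(1-o(1))$-fraction of vertices. You only claim non-concentration ``with probability bounded away from zero'' over $(v,\text{data})$, and your local argument only treats the event that the useful subtree at $v$ is the singleton $\{v\}$, which has probability $e^{-\mu}<1$. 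That yields at best $\E[ov]\le 1-e^{-\mu}+o(1)$, i.e.\ impossibility of \emph{almost exact} alignment, not intractability of partial alignment. To close this you would need to handle the vertices whose useful subtree is finite but non-trivial (all of them, in the subcritical regime) and show that for each such $v$ there are $\omega(1)$ posterior-equivalent candidates in $G^{(2)}$; this is essentially an automorphism / exchangeability argument on the global observed graphs, not just a local-limit statement, and you have not supplied it. The ``symmetry argument over the vertices of graph $2$ compatible with the observations'' is also left vague: you must exhibit measure-preserving transformations of the posterior, not merely count vertices with the same local type.

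\textbf{How the paper proceeds instead.} The paper does not reduce to one-point posteriors. It works globally with the concentration functional $C_n(r)$ via Lemma~\ref{invariance}: for each ball $B(\bi_0,1-\eps)$, it builds a function $F$ sending $\bi\mapsto\{\bigma_k\bi\}$ where the $\bigma_k=(\sigma_k,\ldots,\sigma_k)$ run over automorphisms of $G_1\cap H_1(\bi)$ fixing the complement of $\overline{X}(\bi)$. Proposition~\ref{hamiltonianinvariant} shows these do not increase $\mathcal{H}$, and Proposition~\ref{whatisneg} (using that $G_1\cap H_1(\bi^*)$ is a subcritical Erd\H{o}s--R\'enyi graph with parameter $\lambda s(1-(1-s)^{p-1})/n$) supplies at least $(cn)!$ such automorphisms with few fixed points, by permuting the many isomorphic small tree-components. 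Quasi-injectivity (condition~(ii)) is then checked separately. The automorphism count of order $(cn)!$ is precisely what forces $C_n(r)=o_\Proba(1)$ for every $r<1$, hence $ov=o_\Proba(1)$.

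You correctly identify $\mu=\lambda s(1-(1-s)^{p-1})$ as the governing branching constant and its interpretation, which is the right intuition; but the mechanism converting subcriticality into $ov=o_\Proba(1)$ must be the global automorphism / posterior-invariance construction, not a positive-probability local event.
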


Just like in the two-graph case, this hypothetical cutoff represents a topological phenomenon: $\lambda s(1 - (1 - s)^{p-1})$ is the parameter of the Erdős–Rényi graph $H^{(1)} = G^{(1)} \cap (G^{(2)} \cup \ldots \cup G^{(p)})$, and so our theorem-conjecture pair states that the feasibility of partial alignment should depend only on whether the graph $H^{(1)}$ has a giant component. This idea is supported by the results from the recent preprint \cite{exacter}, where it is proven that the cutoff for exact alignment is the same as the cutoff for $H^{(1)}$ to be connected.

The proofs of these theorems are based upon a common framework, which we develop in Section \ref{infsection}: we establish some necessary and sufficient conditions for the feasibility of a certain class of Bayesian estimation problems (including partial graph alignment). This framework is applicable to a large number of high-dimensional statistical problems, beyond simply graph alignment problems.

{ How about the feasibility threshold for the Erdős–Rényi model? Well, we believe that Theorem \ref{erthm} is, in fact, sharp:

\begin{conjecture} \label{erconj}
     Assume that $\lambda s(1 - (1 - s)^{p-1}) > 1$. Then, partial alignment is feasible: there exists an estimator $\hat{\bi}$ and some $\eps > 0$ such that, with high probability, $ov(\hat{\bi}, \bi^*) > \eps$.
\end{conjecture}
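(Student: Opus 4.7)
My approach would mirror the strategy used in the two-graph case (\cite{chandeliers}, \cite{threshold}), adapted to exploit information across all $p$ graphs simultaneously. The quantity $\lambda s (1-(1-s)^{p-1})$ admits a clean interpretation: it is the expected number of neighbours of a typical vertex in $G_1$ that are witnessed by at least one of $G_2, \ldots, G_p$, and crossing $1$ is precisely the supercriticality threshold of the associated Galton--Watson branching process. Concretely, the joint neighbourhood of a typical vertex in $(G_1, \ldots, G_p)$ converges locally to a Galton--Watson tree with Poisson$(\lambda)$ offspring whose edges carry i.i.d.\ labels $b \in \{0,1\}^p$ with $\Proba(b) = s^{|b|}(1-s)^{p-|b|}$, and the subgraph of $G_1$ formed by edges with $b_1 = 1$ and at least one other coordinate equal to $1$ is locally a Poisson$(\lambda s (1-(1-s)^{p-1}))$ Galton--Watson tree, with a macroscopic ``multi-witnessed'' giant component under the condition of the conjecture. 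The plan is to produce an estimator whose alignment is non-trivial on this giant component.

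The next step is to construct a local likelihood-ratio test: given a candidate matched tuple $(u, v_2, \ldots, v_p)$ with $u$ observed in $G_1$ and $v_i$ in $G_i$, score the agreement between their local neighbourhoods out to some radius $r = r(n)$ using a weighted count of multi-graph tree motifs, generalising the ``chandeliers'' of \cite{chandeliers}. The motifs should be chosen so that each edge of the master graph contributes through the coordinates in which it is observed, so that the mean of the score under the matched hypothesis scales with a power of $\lambda s(1-(1-s)^{p-1})$. A Kesten--Stigum-type analysis on the labelled branching tree should then show that the signal-to-noise ratio of the test is bounded away from zero precisely when this factor exceeds $1$.

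I would aggregate these local tests into a global estimator $\hat{\bi}$ by declaring $\hat{\pi}_i(u) = v_i$ when the pairwise score exceeds a suitable threshold and no competing candidate does, filling the remainder arbitrarily. Partial alignment $ov(\hat{\bi}, \bi^*) > \eps$ with high probability would follow from showing that a positive fraction of vertices lie in the multi-witnessed giant component and are correctly matched by the local test. An alternative route, more directly connected to the Bayesian framework of the paper, is to analyse the posterior mean estimator via the variance decomposition used for the impossibility direction; both routes ultimately reduce to the same tree recursion.

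The hardest part --- and the reason this remains only a conjecture --- is near-criticality control. When $\lambda s(1-(1-s)^{p-1}) = 1 + \delta$ with $\delta$ small, the signal-to-noise ratio of the tree-counting test degrades and short-range correlations can dominate. Tracking the belief-propagation recursion on the multi-graph Galton--Watson tree is already delicate for $p = 2$; for $p \geq 3$, the labelled-edge structure makes the offspring distribution multi-dimensional and introduces further correlations between coordinates along overlapping paths. I expect that a sharp feasibility proof at the conjectured threshold will require a genuine generalisation of the two-graph analysis, rather than a reduction to $\binom{p}{2}$ pairwise problems.
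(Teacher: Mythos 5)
This statement is a \emph{conjecture} in the paper, not a theorem: the authors prove the converse impossibility direction (Theorem~\ref{erthm}) but explicitly leave feasibility above the threshold open, pointing only to the topological interpretation via the giant component of $H_1 = G_1 \cap (G_2 \cup \cdots \cup G_p)$ and the exact-alignment result of \cite{exacter} as supporting evidence. There is therefore no proof in the paper to compare your attempt against, and your sketch is, as you yourself say, a roadmap rather than a proof.

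Your reading of the threshold $\lambda s(1-(1-s)^{p-1})$ as the offspring mean of $H_1$ matches the paper's own intuition, and the three-step plan (local weak limit to a labelled Galton--Watson tree, a tree-motif/likelihood score generalising chandeliers, aggregation into a global estimator) is the natural adaptation of the two-graph feasibility machinery. You are also right to flag near-criticality and the multi-dimensional offspring structure as the real obstructions. Two places where the sketch glides over genuine difficulties worth naming. First, the conjecture concerns the strict overlap $ov$, which requires \emph{all} coordinates $\sigma_2(i),\ldots,\sigma_p(i)$ to be recovered at the same vertex $i$; supercriticality of $H_1$ only guarantees that each recovered $H_1$-edge is witnessed by \emph{some} $G_j$ with $j\geq 2$, not by all of them. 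Your aggregation step declares $\hat\pi_i(u)=v_i$ ``when the pairwise score exceeds a threshold,'' but in the regime $\lambda s^2 < 1 < \lambda s(1-(1-s)^{p-1})$ every pairwise intersection $G_1\cap G_i$ is subcritical, so any per-coordinate or pairwise test has vanishing signal and the bootstrapping from a partially-revealed $\sigma_2$ to $\sigma_3,\ldots$ needs to be made precise; this is exactly why the conjecture is not a corollary of $\binom{p}{2}$ applications of the two-graph result. Second, the ``alternative route'' through the posterior and the framework of Section~\ref{infsection} is not as symmetric as the sketch suggests: Lemma~\ref{invariance} and Proposition~\ref{whatisneg} are built to prove diffuseness (impossibility), and there is no analogous general lemma in the paper that converts tree-level signal into concentration of $\Proba_{post}$; that direction would require new estimates on $C_n(r)$ from below, closer in spirit to the Gaussian analysis of Proposition~\ref{energy} than to anything in Section~\ref{erproof}.
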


This seems harder to prove than our intractability statement even if it is the same threshold. For our theorem, we simply need to exhibit an obstruction to alignment: finding any symmetries of the problem which make alignment intractable is sufficient, and this is exactly what we do in Proposition \ref{hamiltonianinvariant}. However, if we wished to tackle the conjecture, this would amount to showing that no such symmetries exist. The method used to show Conjecture \ref{erconj} in the case $p=2$, in the paper \cite{infotheory}, uses cycle decompositions to obtain uniform controls over $\sigma \in \mc{S}_n$ and prove that a certain estimator succeeds; in our case, due to the overlapping structures of multiple permutations, cycle decompositions are not possible and the method fails.}

Finally, we will note that during the proof of Theorem \ref{mainthm}, we also prove a lower bound on the weight of the maximal spanning tree of certain weighted graphs, which appears to be novel (to our knowledge) and may be of independent interest. The specific result is detailed in Theorem \ref{spanthm} and Remark \ref{spanrk}.

\subsection{Structure of the paper and notation}

In section \ref{infsection}, we lay out the aforementioned framework for analysing a certain class of Bayesian estimation problems. In Section \ref{gaussproof}, we prove Theorem \ref{mainthm} regarding the Gaussian model; in Section \ref{erproof}, we prove Theorem \ref{erthm} regarding the Erdős–Rényi model.

Throughout this paper, $n$ represents the size of the considered graphs; all limits are taken as $n \to +\infty$. For random variables $X_n, Y_n$, we write $X = O_{\Proba}(Y)$ (resp. $X = o_{\Proba}(Y)$) to mean that $\frac{X_n}{Y_n}$ is tight (resp. $\frac{X_n}{Y_n} \toPr 0$). We write $x_n \ll y_n$ to mean $x_n = o(y_n)$, and we define $\ll_{\Proba}$ similarly. We say that a sequence $(A_n)$ of events occurs with high probability if $\Proba(A_n) \to 1$.
$E$ is the set of edges (i.e. unordered pairs of integers) in $\llb 1, n  \rrb$. 

An alignment $\bi$ of $p$ graphs $(G^{(1)}, \ldots, G^{(p)})$ is a $p$-tuple $(\pi_1 = \Id, \ldots, \pi_p)$, where $\pi_i$ is a bijection between the vertices of $G^{(i)}$ and those of  $G^{(1)}$. Since the vertex set of $G^{(i)}$ is assumed to be $\llb 1, n \rrb$, the set of alignments can be identified with $\{ \Id \} \times \mathcal{S}_n^{p-1} \simeq \mathcal{S}_n^{p-1}$, which we will do for the rest of this paper. 

For any matrix $M$, $\|M\|_F$ is its Frobenius norm and $\|M\|_{op}$ is the associated operator norm. $I_k$ is the identity matrix of size $k \times k$; $J_k$ is the all-ones matrix of size $k \times k$.  If $A$ is a set, we denote by $\mathcal{P}(A)$ the set of subsets of $A$.

Proofs of the various statements made below can be found in the appendix if they are not in the main body of text.

\section{Bayesian estimation over metric spaces} \label{infsection}
\subsection{The graph alignment problem as a Bayesian inference problem}
The graph alignment problem is a Bayesian inference problem. As defined previously, it involves sampling $p$ random graphs $(G^{(1)}, G^{(2)}, \ldots, G^{(p)})$ from some fixed joint distribution; an alignment $\bi^* \in \mathcal{S}_n^{p-1}$ is then picked uniformly at random, and we try to recover $\bi^*$ from the observation of the graphs $(\bi^*)^{-1}(G) = ((\bi^*_1)^{-1}(G^{(1)}), (\bi^*_2)^{-1}(G^{(2)}), \ldots, (\bi^*_p)^{-1}(G^{(p)}))$. Another way to formulate this is that we have observed a $p$-tuple of random graphs $\tilde{G} =(\tilde{G}^{(1)}, \ldots, \tilde{G}^{(p)})$ (where $\tilde{G} = (\bi^*)^{-1}(G)$), sampled from a distribution $\Proba_{\bi^*}$ for some unknown $\bi^* \in \mathcal{S}_n^{p-1}$ whose prior distribution is assumed to be the uniform distribution $\mathcal{U}(\mathcal{S}_n^{p-1})$; again, we wish to estimate $\bi^*$. This reformulation of the problem places it squarely within the zoo of Bayesian inference problems. Specifically, we are trying to construct an estimator $\hat{\bi}$, which is a  $\tilde{G}$-measurable random variable, such that the expected loss
\begin{equation}
    L = \E_{\bi^* \sim \mathcal{U}(\mathcal{S}_n^{p-1})}[\E_{(\tilde{G}^{(1)}, \ldots, \tilde{G}^{(p)}) \sim \Proba_{\bi^*}}[l(\bi^*, \hat{\bi})]]
\end{equation}
is as small as possible, where $l: (\mathcal{S}_n^{p-1})^2 \to \R_+$ is a predetermined loss function.

What would be an adequate choice for $l$? A natural default choice could be the all-or-nothing loss function $l(\bi, \bi') = \1_{\bi \neq \bi'}$; with this loss function, this inference problem is known as \textbf{exact alignment}. However, we have a notion of closeness for permutations, and we would like to acknowledge estimators which are "close enough" to the true alignment. 

In the case where $p=2$, we may define
\begin{equation}\label{eq:overlap}
    ov(\pi, \pi') = \frac{1}{n} \sum_{i=1}^n \1_{\pi(i) = \pi'(i)} \in [0, 1]
\end{equation}
and $d = (1 - ov)$ is a distance over $\mathcal{S}_n$. This means that we now have other natural loss functions: for instance, the functions $l_r(\pi, \pi') = \1_{d(\pi, \pi') \geq r}$, for $0 \leq r < 1$. This allows us to ask slightly different questions: namely, the so-called \textbf{almost exact} and \textbf{partial} alignment problems, which we shall describe in the next section. We may similarly extend the notion of overlap to $\mathcal{S}_n^{p-1}$ when $p \geq 3$; there are multiple ways to do this, including the definition (\ref{overlap}) which we adopted to state our main results. We discuss this further in Section \ref{multiov}.

Before proceeding, let us describe what the metric space $\mathcal{S}_n$ actually looks like. One way to conceptualise it is as a hyperbolic space: for any $\pi \in \mathcal{S}_n$, the size of the ball $B(\pi, r)$ grows roughly like $(n^n)^r$, for $0 \leq r \leq 1$.

Another, more explicit, description of this space is the following. Within the space $\mathcal{M}_n(\R)$ of $n \times n$ matrices, the set $B_n$ of doubly stochastic matrices is a polytope (known as the Birkhoff polytope), whose vertices are the permutation matrices (which we may identify with $\mathcal{S}_n$). The canonical Euclidean structure on this space gives us the metric 

    \begin{equation}
        \langle \pi, \pi' \rangle = ov(\pi, \pi') \text{ i.e. } d_B(\pi, \pi') = \sqrt{2(1 - ov(\pi, \pi'))}
    \end{equation}
    which largely serves the same purpose as our previous distance $d$ in this context. This description also shows quite nicely why we often relax the alignment problem to solve a more general quadratic assignment problem over $B_n$, as in \cite{fan2023}, \cite{varma2025}.

\subsection{Metric estimation problems} \label{estsection}

What follows is fairly general; as motivation for our definitions, it may be helpful to think of the specific case of the graph alignment problem, as defined above.

Fix a sequence $(A_n, d = d_n, \mu = \mu_n)$, which are metric spaces equipped with an a priori probability distribution. Fix a sequence $(B_n)$ of (measurable) observation spaces, equipped with probability distributions $(\nu_x)_{x \in A_n}$ such that $x \mapsto \nu_x$ is measurable. To fix ideas, we will assume that $\diam A_n = 1$, so that $d(x, y) \simeq 1$ means that $x, y$ are far apart.

We set $X_n \in A_n$ a random variable with law $\mu$, and $Y_n \in B_n$ a random variable with law $\nu_{X_n}$; from the observation of $Y_n$, we are trying to estimate the value of $X_n$. We define $\Proba = \Proba_n$ to be the distribution of $(X_n, Y_n)$, and, for any measurable $S_n \subseteq A_n$,
\begin{equation}
    \Proba_{post}(S_n) = \Proba(S_n | Y_n)
\end{equation}
the a posteriori probability of $S_n$.
\begin{definition} \label{posstract}
    A \textbf{partial estimator} $\hat{X}_n$ of $X_n$ is a $Y_n$-measurable random variable such that, for some $0 \leq r < 1$,
        \begin{equation}
            \E[l_r(\hat{X}_n, X_n)] \underset{n \to +\infty}{\longrightarrow} 0 \hspace{10pt} \text{i.e.} \hspace{10pt}  \Proba(d(\hat{X}_n,  X_n) > r) \underset{n \to +\infty}{\longrightarrow} 0
        \end{equation}
        We will say that partial estimation is \textbf{feasible} if a partial estimator exists.
        
        On the other hand, we will say that partial estimation of $X_n$ is \textbf{intractable} if, for any $0 \leq r < 1$, and for any sequence $(\hat{X}_n)$ of estimators,
        \begin{equation}
            \E[l_r(\hat{X}_n, X_n)] \underset{n \to +\infty}{\longrightarrow} 1
        \end{equation}
         An \textbf{almost exact estimator} $\hat{X}_n$ of $X_n$ is a $Y_n$-measurable random variable such that, for any $\eps > 0$, 
        \begin{equation}
            \E[l_\eps(\hat{X}_n, X_n)] \underset{n \to +\infty}{\longrightarrow} 0  \hspace{10pt} \text{i.e.} \hspace{10pt} \Proba(d(\hat{X}_n,  X_n) > \eps) \underset{n \to +\infty}{\longrightarrow} 0
        \end{equation}
        We will say that almost exact estimation is \textbf{feasible} if an almost exact estimator exists.
        
        On the other hand, we will say that almost exact estimation of $X_n$ is \textbf{intractable} if, for some $\eps > 0$, and for any sequence $(\hat{X}_n)$ of estimators,
        \begin{equation}
            \E[l_\eps(\hat{X}_n, X_n)] \underset{n \to +\infty}{\longrightarrow} 1
        \end{equation} 
\end{definition}
{
\begin{remark}
    Notice that partial (resp. almost exact) estimation being intractable is a stronger statement than the nonexistence of a partial (resp. almost exact) estimator. Indeed, there exists an intermediate regime where one could build estimators which - say - succeed half the time and fail half the time. 
\end{remark}
}
A classical result from Bayesian decision theory (see \citep[Section~4]{berger} for instance) is the following.
\begin{proposition} \label{classic}
    Fix $r \geq 0$: we define
    \begin{equation}
        \hat{X}_n^{opt} = \underset{x \in A_n}{\arg \min}\E[l_r(x,X_n) | Y_n] = \underset{x \in A_n}{\arg \max} \Proba_{post}(B(x, r))
    \end{equation}
    where $B(x, r)$ is the (closed) ball of radius $r$ centred at $x$, for the distance $d$. Then, $\hat{X}_n^{opt}$  minimises the expected loss $\E[l_r]$. 
\end{proposition}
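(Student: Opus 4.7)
The plan is to use the standard tower-property argument from Bayesian decision theory: to reduce the global minimization of $\E[l_r(\hat{X}_n, X_n)]$ over all $Y_n$-measurable estimators to a pointwise-in-$y$ minimization of the posterior conditional expected loss $\phi(x, y) := \E[l_r(x, X_n) \mid Y_n = y]$.

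More precisely, I would first write, for any $Y_n$-measurable estimator $\hat{X}_n$,
\begin{equation*}
    \E[l_r(\hat{X}_n, X_n)] = \E\bigl[ \E[l_r(\hat{X}_n, X_n) \mid Y_n] \bigr] = \E[ \phi(\hat{X}_n, Y_n) ],
\end{equation*}
where the second equality uses that $\hat{X}_n$ is $Y_n$-measurable, so it may be "pulled out" of the inner conditional expectation. For each fixed realization $y$ of $Y_n$, the value $\phi(x, y)$ depends only on $x$ and the posterior law $\Proba_{post}$, and is minimized over $x \in A_n$ precisely by $\hat{X}_n^{opt}(y) := \arg\min_{x} \phi(x, y)$. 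Taking expectations, any other estimator $\hat{X}_n$ satisfies $\phi(\hat{X}_n, Y_n) \geq \phi(\hat{X}_n^{opt}, Y_n)$ almost surely, so $\E[l_r(\hat{X}_n, X_n)] \geq \E[l_r(\hat{X}_n^{opt}, X_n)]$, as required.

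The second equality in the statement, that $\arg\min_x \phi(x, y)$ coincides with $\arg\max_x \Proba_{post}(B(x, r))$, follows immediately by unpacking the definition of $l_r$: for every $x \in A_n$,
\begin{equation*}
    \phi(x, y) = \Proba\bigl( d(x, X_n) > r \,\bigm|\, Y_n = y \bigr) = 1 - \Proba_{post}(B(x, r)),
\end{equation*}
where here $B(x, r) = \{ z \in A_n : d(x, z) \leq r \}$ (the choice of open vs.\ closed ball is immaterial up to a null set under the posterior). Minimizing $\phi(x, y)$ is thus equivalent to maximizing $\Proba_{post}(B(x, r))$.

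The main obstacle, which I expect to be the only genuinely technical point, is that of \emph{measurable selection}: one must verify that a minimizer of $x \mapsto \phi(x, Y_n)$ can be chosen as a $Y_n$-measurable random variable, and that the minimum is attained. In the finite setting $A_n = \mathcal{S}_n^{p-1}$ relevant to graph alignment, both issues are trivial — the argmin is over a finite set and can be taken, say, by lexicographic tie-breaking — so the result applies directly. For general Polish $A_n$ one would invoke a classical measurable-selection theorem (such as the Kuratowski–Ryll-Nardzewski theorem) together with lower semicontinuity of $\phi(\cdot, y)$; since the paper only needs the finite case, this can be dispatched briefly.
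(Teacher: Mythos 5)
Your argument is the standard tower-property/posterior-risk argument from Bayesian decision theory, which is exactly the "classical result" the paper cites from Berger without spelling out a proof, so the approach coincides with the paper's intended one and is correct. One small remark: the paper defines $l_r(\sigma,\sigma')=\1_{d(\sigma,\sigma')\geq r}$, so $\E[l_r(x,X_n)\mid Y_n]=1-\Proba_{post}(\{z:d(x,z)<r\})$, and since $A_n=\mathcal{S}_n^{p-1}$ is discrete the boundary sphere need not be posterior-null; the identity with $\arg\max_x\Proba_{post}(B(x,r))$ therefore holds once $B(x,r)$ is taken as the open ball (or equivalently once $l_r$ is read with a strict inequality, as the paper itself does in its "i.e." restatement), but this is a cosmetic convention and does not affect the argument.
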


To use this, we will define a random variable

\begin{equation}
	C_n(r) \defeq \max_{x \in A_n} \Proba_{post}(B(x, r)) = 1 - \E[l_r(\hat{X}_n^{opt}, X_n) | Y_n] 
\end{equation}

which measures to what extent the random measure $\Proba_{post}$ concentrates around a single point. Then, owing to the choices made in defining feasibility and intractability, the following result is an immediate consequence of the proposition.

\begin{corollary}
     \label{estimation}
        With $C_n$ defined as above,
    \begin{enumerate}
        \item partial estimation is feasible iff, for some $0 \leq r < 1$, $C_n(r) \underset{\Proba}{\longrightarrow} 1$.
        \item almost exact estimation is feasible iff, for any $\eps > 0$, $C_n(\eps) \underset{\Proba}{\longrightarrow} 1$. \\ 
    \end{enumerate}
    On the other hand,
    \begin{enumerate}
        \item partial estimation is intractable iff, for any $0 \leq r < 1$, $C_n(r) \underset{\Proba}{\longrightarrow} 0$.
        \item almost exact estimation is intractable iff, for some $\eps > 0$, $C_n(\eps) \underset{\Proba}{\longrightarrow} 0$.
    \end{enumerate} 
\end{corollary}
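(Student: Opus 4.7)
The corollary is essentially bookkeeping around Proposition \ref{classic}: each of the four equivalences will be turned into a statement about $\E[C_n(r)]$, and the boundedness of $C_n$ handles the passage between convergence in probability and convergence in mean.

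First I would extract what Proposition \ref{classic} actually says in terms of $C_n$. For every fixed $r \geq 0$, the proposition produces an estimator $\hat X_n^{opt}$ with $\E[l_r(\hat X_n^{opt}, X_n) \mid Y_n] = 1 - C_n(r)$ almost surely, and hence $\E[l_r(\hat X_n^{opt}, X_n)] = 1 - \E[C_n(r)]$; moreover, by optimality, every $Y_n$-measurable $\hat X_n$ satisfies $\E[l_r(\hat X_n, X_n)] \geq 1 - \E[C_n(r)]$. This single identity together with one inequality encodes essentially everything.

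Second I would record that, since $C_n(r) \in [0,1]$, one has $C_n(r) \toPr 1 \Leftrightarrow \E[C_n(r)] \to 1$ and $C_n(r) \toPr 0 \Leftrightarrow \E[C_n(r)] \to 0$, via dominated convergence in one direction and Markov's inequality applied to $C_n(r)$ or to $1 - C_n(r)$ in the other.

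Combining these, the four assertions become mechanical. Existence of a partial estimator at some $r < 1$ is equivalent to $\E[l_r(\hat X_n^{opt}, X_n)] \to 0$ for that $r$ (by optimality), i.e., $\E[C_n(r)] \to 1$, i.e., $C_n(r) \toPr 1$; intractability of partial estimation is equivalent to $\E[l_r(\hat X_n^{opt}, X_n)] \to 1$ for every $r < 1$, i.e., $\E[C_n(r)] \to 0$, i.e., $C_n(r) \toPr 0$; and the almost-exact statements follow by the same argument, with $\eps > 0$ replacing $r < 1$. I do not expect any real obstacle: the only tacit point is the measurability of the $\arg\max$ defining $\hat X_n^{opt}$, which is a standard measurable-selection issue and is in any case trivial in the concrete settings of this paper, where $A_n$ is finite.
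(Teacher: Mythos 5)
Your reduction of the corollary to the two facts --- (i) by Proposition \ref{classic}, $\E[l_r(\hat{X}_n^{opt},X_n)]=1-\E[C_n(r)]$ and every $Y_n$-measurable estimator does at least this badly, and (ii) since $0\le C_n(r)\le 1$, one passes freely between $C_n(r)\toPr 1$ (resp.\ $0$) and $\E[C_n(r)]\to 1$ (resp.\ $0$) --- is exactly the bookkeeping the paper leaves unwritten, and it settles items (1), (3), (4) cleanly, since in each of those a single radius $r$ (or $\eps$) suffices and one may use the optimal estimator at that radius.

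For item (2), however, the forward implication (if $C_n(\eps)\toPr 1$ for every $\eps>0$, then an almost exact estimator exists) is not literally ``the same argument with $\eps$ replacing $r$.'' The estimator $\hat{X}_n^{opt}$ supplied by Proposition \ref{classic} depends on the chosen radius, so running your argument at each fixed $\eps$ produces a possibly different sequence $\hat{X}_n^{(\eps)}$ with $\E[l_\eps(\hat{X}_n^{(\eps)},X_n)]\to 0$, whereas the definition of an almost exact estimator requires a \emph{single} sequence $(\hat{X}_n)$ achieving this for \emph{every} $\eps>0$ simultaneously. This gap is real but easy to fill: choose $\eps_n\downarrow 0$ slowly enough that $\E[C_n(\eps_n)]\to 1$ and set $\hat{X}_n=\hat{X}_n^{opt,\eps_n}$; then for any fixed $\eps>0$ and all $n$ large enough that $\eps_n\le\eps$, the inclusion $B(\hat{X}_n,\eps_n)\subseteq B(\hat{X}_n,\eps)$ gives $\E[l_\eps(\hat{X}_n,X_n)]\le 1-\E[C_n(\eps_n)]\to 0$. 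You should make this diagonalization explicit rather than asserting the four cases are uniformly ``mechanical.''
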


\begin{remark}
Assume that almost exact estimation is feasible, and let $\eps > 0$. Then,

\begin{equation}
    \Proba_{post}(B(X_n, 2 \eps)) \geq \Proba_{post}(B(\hat{X}_n^{opt}, \eps) )\1_{d(X_n, \hat{X}_n^{opt}) < \eps} = 1 - o_{\Proba}(1).
\end{equation}

Otherwise put, with high probability, the mass of $\Proba_{post}$ will tend to concentrate around $X_n$.
\end{remark}

\subsection{A brief aside on multi-graph alignment} \label{multiov}
As we previously suggested, there are a few natural extensions of $ov$ to $\mathcal{S}_n^{p-1}$. Two particularly relevant ones here are: 
\begin{equation} \label{strongov}
   ov(\bi, \bi') = \frac{1}{n} \sum_{i=1}^n \1_{\pi_2(i) = \pi'_2(i), \ldots, \pi_p(i) = \pi'_p(i)} \text{ (as defined earlier)}
\end{equation}
\begin{equation}
   ov_w(\bi, \bi') = \frac{1}{p-1} \sum_{i=2}^p ov(\pi_i, \pi'_i)
\end{equation}
Essentially, $ov$ measures how often the $(\pi_i, \pi_i')$ all agree at the same time; while $ov_w$ measures how often they will agree pairwise. Thus, informally, graph alignment for $d = 1 - ov$ involves determining $(\pi_2^*, \ldots, \pi_p^*)$ on some given set $X$, while graph alignment for $d_w = 1 - ov_w$ involves determining $\pi_2^*$ on some (large) set $X_2$, \ldots, and $\pi_p^*$ on some (large) set $X_p$.  With this in mind, it seems that the distance $d$ should be more natural as a choice, motivating the following definition.

\begin{definition}
    We will say that partial graph alignment is possible (resp. intractable) if partial estimation of the true alignment $\bi^*$, for the distance function $d$, is possible (resp. intractable), as defined in Definition \ref{posstract}.
    For the distance $d_w$, we will instead speak of \textbf{weak} partial alignment.
\end{definition}

The same definition applies to almost exact alignment. However, in this case, we do not need to make the distinction between normal and weak alignment:

\begin{proposition} If partial alignment is possible (resp. not intractable), then weak partial alignment is possible (resp. not intractable). 

Almost exact alignment is possible (resp. intractable) if and only if  weak almost exact alignment is possible (resp. intractable).
\end{proposition}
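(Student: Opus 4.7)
The proof reduces to establishing the two-sided inequality
\begin{equation}
    d_w(\bigma, \bigma') \;\leq\; d(\bigma, \bigma') \;\leq\; (p-1)\, d_w(\bigma, \bigma').
\end{equation}
The left inequality is immediate: the event $\{\sigma_2(i) = \sigma'_2(i), \ldots, \sigma_p(i) = \sigma'_p(i)\}$ implies each individual event $\{\sigma_j(i) = \sigma'_j(i)\}$, so $\1_{\{\sigma_2(i)=\sigma'_2(i),\ldots\}} \leq \frac{1}{p-1}\sum_{j=2}^p \1_{\sigma_j(i)=\sigma'_j(i)}$; summing over $i$ gives $ov \leq ov_w$, i.e.\ $d \geq d_w$. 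The right inequality is a union bound: an index $i$ at which the $p$-tuples disagree is an index at which some $\sigma_j(i) \neq \sigma'_j(i)$, whence $1 - ov(\bigma, \bigma') \leq \sum_{j=2}^p (1 - ov(\sigma_j, \sigma'_j)) = (p-1)\, d_w(\bigma, \bigma')$.

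From these two inequalities, all four claims follow by unpacking the definitions. For the first sentence (partial $\Rightarrow$ weak partial): since $\{d_w > r\} \subseteq \{d > r\}$, any $Y$-measurable $\hat{\bigma}$ satisfies $\Proba(d_w(\hat{\bigma}, \bigma^*) > r) \leq \Proba(d(\hat{\bigma}, \bigma^*) > r)$. Thus a partial estimator for $d$ is automatically a partial estimator for $d_w$ with the same $r$, and the ``not intractable'' version follows from the same event containment (no limit needed).

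For the second sentence, the direction ``almost exact $\Rightarrow$ weak almost exact'' uses the left inequality exactly as above, applied to every $\eps > 0$. For the converse, given $\eps > 0$, apply the hypothesis of weak almost exact estimation at precision $\eps/(p-1)$ to obtain $\hat{\bigma}$ with $\Proba(d_w(\hat{\bigma}, \bigma^*) > \eps/(p-1)) \to 0$; by the right inequality, $\{d > \eps\} \subseteq \{d_w > \eps/(p-1)\}$, so $\Proba(d(\hat{\bigma}, \bigma^*) > \eps) \to 0$. For intractability, the same two containments of events give the equivalence: $\Proba(d > \eps) \geq \Proba(d_w > \eps)$ handles one direction and $\Proba(d_w > \eps/(p-1)) \geq \Proba(d > \eps)$ handles the other, since $p$ is a fixed constant.

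There is no real obstacle here: the only content is the two-sided inequality, and the rest is careful bookkeeping of the quantifiers in the definitions of (weak) partial and almost exact alignment. The asymmetry in the main proposition (only one direction for partial, both for almost exact) is a direct reflection of the fact that the factor $(p-1)$ is harmless when $\eps$ can be taken arbitrarily small but destroys the constraint $r < 1$ in the partial case.
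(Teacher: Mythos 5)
Your proof is correct and takes the same approach as the paper: both reduce the proposition to a two-sided comparison between $d$ and $d_w$ and then unpack the quantifiers, with the observation that the factor $p-1$ is harmless for almost exact alignment ($\eps$ arbitrary) but not for partial alignment ($r<1$ is a hard constraint). One note: your inequality $d_w \leq d \leq (p-1)d_w$ is the correct one, and your derivation of it (agreement of all components at $i$ is stricter than agreement of each individual component, so $ov \leq ov_w$; union bound for the other side) is right. The paper writes $d \leq d_w \leq (p-1)d$, which has the roles of $d$ and $d_w$ reversed and is as stated inconsistent with the direction of implication it then claims; this appears to be a typo in the paper, and your version is the one that actually makes the proposition go through.
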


\begin{proof}
    This follows from the fact that $d_w \leq d \leq (p - 1) d_w$. \qed
\end{proof}

\subsection{Applying metric estimation} \label{outline}

If we want to understand partial, or almost exact, alignment, our job is thus to control the random variable $C_n(r)$ defined earlier. In the Gaussian case, in order to show intractability of partial alignment, we can directly perform calculations to estimate $C_n(r)$.  

In the Erdős–Rényi case, however, our arguments are much more combinatorial in nature. The general idea is quite simple: if $B = B(\bi^*, r)$ is a ball, we would like to take the alignments within $B$ (which are close to the true alignment $\bi^*$) and modify them to obtain a large number of other alignments, which are nowhere near $\bi^*$ but which seem just as plausible to an outside observer. In the case where $p=2$, this is done in \cite{partialimposs}: they construct a large number of $(\sigma_i) \in S_n$ which are far apart and which \textbf{exactly} preserve $\Proba_{post}$, so that $\Proba_{post}(\sigma_i(B)) = \Proba_{post}(B)$ and the $\sigma_i(B)$ are all disjoint. This is fairly technical, and not quite necessary: at the cost of some abstraction, we can provide a significantly more robust argument, which holds for any metric estimation problem, and does not require that we preserve $\Proba_{post}$ exactly.
\begin{lemma} \label{invariance}
    Assume that the state space $A_n$ is finite. Let $\Neg_n \subseteq A_n$, such that $\Proba_{post}(\Neg_n) = o_\Proba(1)$. (This contains states which are implausibly ill-behaved and which we may ignore.)
    
    For any ball $B = B(x_0, r)$ with radius $r < 1$, assume that there exists a random function $F: B \setminus \Neg_n \to \mathcal{P}(A_n)$ (which depends upon $n, B$), satisfying certain conditions:
    \begin{enumerate}[label=(\roman*)]
        \item $F$ outputs an overwhelmingly plausible set of states, in the following sense: there exists $\eps_n(r) > 0$ such that, for $x \in B \setminus \Neg_n$,
        \begin{equation}
            \Proba_{post}(x) \leq \eps_n(r) \Proba_{post}(F(x)).
        \end{equation}    
        \item $F$ is quasi-injective, in the following sense. For any $r < 1$, there exists  $K_n(r) > 0$ such that $K_n(r) \eps_n(r) \overset{\Proba}{\underset{n \to +\infty}{\longrightarrow}} 0$ and, if $y \in A_n$, there are at most $K_n(r)$ values of $x \in B$ such that $y \in F(x)$.
    \end{enumerate}
    Then, partial estimation is intractable.
\end{lemma}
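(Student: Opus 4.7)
The plan is to show that the hypotheses force $C_n(r) = \max_{x_0 \in A_n} \Proba_{post}(B(x_0,r))$ to tend to $0$ in probability for every $r < 1$, after which Corollary \ref{estimation} immediately yields intractability of partial alignment. The strategy is a standard swap-of-sums / double-counting argument, in which condition (i) lets us trade mass at each plausible point $x$ for mass on its image $F(x)$, and condition (ii) controls the overcounting that this trade produces.

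Fix $r < 1$ and any ball $B = B(x_0,r)$. I would first split
\begin{equation}
    \Proba_{post}(B) \;\leq\; \Proba_{post}(B \setminus \Neg_n) + \Proba_{post}(\Neg_n),
\end{equation}
where the second summand is $o_\Proba(1)$ by assumption. For the first summand I apply (i) pointwise: for every $x \in B \setminus \Neg_n$,
\begin{equation}
    \Proba_{post}(x) \;\leq\; \eps_n(r)\, \Proba_{post}(F(x)) \;=\; \eps_n(r) \sum_{y \in F(x)} \Proba_{post}(y).
\end{equation}
Summing over $x \in B \setminus \Neg_n$ and switching the order of summation,
\begin{equation}
    \Proba_{post}(B\setminus \Neg_n) \;\leq\; \eps_n(r) \sum_{y \in A_n} \Proba_{post}(y) \cdot \bigl|\{x \in B \setminus \Neg_n : y \in F(x)\}\bigr|,
\end{equation}
and by (ii) each inner cardinality is at most $K_n(r)$, so the total is bounded by $\eps_n(r) K_n(r) \cdot \Proba_{post}(A_n) = \eps_n(r) K_n(r)$, which is $o_\Proba(1)$.

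Crucially, the resulting bound $\Proba_{post}(B) \leq \eps_n(r) K_n(r) + \Proba_{post}(\Neg_n)$ depends on $B$ only through its radius $r$. Taking the maximum over $x_0 \in A_n$ therefore gives $C_n(r) = o_\Proba(1)$ for every $r < 1$, and Corollary \ref{estimation} concludes. The main obstacle is conceptual rather than technical: the actual proof is a one-page swapping of sums, but the statement has to be set up carefully so that $\eps_n(r)$ and $K_n(r)$ are uniform in the choice of ball (not merely in the radius), and so that the exceptional set $\Neg_n$ absorbs the points where (i) could fail. Verifying these hypotheses in the Erdős–Rényi model, where $F$ must be built by locally rewiring permutations around a candidate $\pi_0$, will be the substantive work in Section \ref{erproof}.
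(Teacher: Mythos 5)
Your proposal is correct and follows essentially the same swap-of-sums argument as the paper, trading the mass at each $x \in B \setminus \Neg_n$ for the mass on $F(x)$ and using quasi-injectivity to bound the overcounting by $K_n(r)$. The one thing you make explicit that the paper leaves implicit is that the final bound $\eps_n(r) K_n(r) + \Proba_{post}(\Neg_n)$ is uniform over the centre $x_0$ of the ball, which is what allows the passage from a bound on $\Proba_{post}(B)$ for a single ball to a bound on $C_n(r) = \max_{x_0} \Proba_{post}(B(x_0,r))$.
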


\begin{remark}
    Notably, we do not require that the quantities $\eps_n, K_n$ be deterministic: this would be somewhat restrictive given that the measure $\Proba_{post}$ is itself random.
\end{remark}

Here, $F$ takes on the same role as the map $\pi \mapsto (\sigma_i \circ \pi)_{i}$, in the $2$-graph argument from \cite{partialimposs}.

\begin{proof}
Let $B = B(x, r)$ be a ball with radius $r < 1$. Then:
    \begin{equation}
    \begin{split}
        \Proba_{post}(B) & \leq \Proba_{post}(\Neg_n) + \sum_{y \in B \setminus \Neg_n} \Proba_{post}(y) \\
        & \leq o_{\Proba}(1) + \eps_n(r)\sum_{y \in B \setminus \Neg_n}  \Proba_{post}(F(y)) \\
        & \leq o_{\Proba}(1) + \eps_n(r) \sum_{y' \in A_n} \sum_{y \text{ s.t. }y' \in F(y)} \Proba_{post}(y') \\
        & \leq o_{\Proba}(1) + \eps_n(r) \sum_{y' \in A_n} K_n(r) \Proba_{post}(y') \\
        & = o_{\Proba}(1) + \eps_n(r) K_n(r) \Proba_{post}(A_n) = o_{\Proba}(1)
    \end{split}
\end{equation}

Applying Corollary \ref{estimation}, the lemma is shown.\qed
\end{proof}
\section{Solving the Gaussian model} \label{gaussproof}
In this section, we prove Theorem \ref{mainthm}. As discussed in Section \ref{multiov}, it will be sufficient for us to prove it for the distance function $d$; from now on, all balls $B(\bi, r)$ are defined with regards to this metric.

In all of the following theorems, we assume that 
\begin{equation}
    \rho = \ds\sqrt{\frac{8(1 + \eta) \log n}{p n}}
\end{equation}
for some $\eta = O(1)$. We will prove the feasibility of exact alignment for any fixed $\eta > 0$, and show that partial alignment is intractable when $\eta = -(\log n)^{- \frac{1}{5}}$. 

Note that this is, in fact, sufficient to prove the theorem for general $\rho$. Indeed, if we observe $(\bi^*)^{-1}(G)$ for some $G \sim \mathcal{N}(n, \rho)$, we may form a set of graphs $(\bi^*)^{-1}(G')$ where $G' \sim \mathcal{N}(n, \rho')$ for any $\rho' \leq \rho$, simply by adding i.i.d. Gaussian weights to the edges of $(\bi^*)^{-1}(G)$ and renormalising; then, solving the alignment problem for $G'$ allows us to solve it for $G$.

In what follows, for $\bigma \in \mathcal{S}_n^{p-1}$ and $1 \leq i \neq j \leq p$, we will set $\sigma_{ij} = \sigma_j (\pi_j^*)^{-1} \pi_i^* \sigma_i^{-1} $. It may be helpful to assume that $\boldsymbol{\pi}^* = \Id$, since we can (up to some notational bookkeeping) reduce the problem down to this case; then, $\sigma_{ij}$ measures how different $\sigma_i$ and $\sigma_j$ are.

\subsection{The Gibbs measure}
The main appeal of the Gaussian model is that the a posteriori distribution has a nice expression. Specifically:

\begin{proposition} \label{gibbs}
    For any $\bigma \in \mathcal{S}_n^{p-1}$, we may write

    \begin{equation}
        \Proba_{post}(\bigma) = \frac{1}{\prt} e^{- \beta \mathcal{H}(\sigma)}
    \end{equation}
    where the partition function $\prt$ is independent from $\bigma$. Here,

    \begin{equation}
        \mathcal{H}(\bigma) = - \sum_{e \in E} \sum_{1 \leq i \neq j \leq p} G_e^{(i)} G_{\sigma_{ij}(e)}^{(j)}; \hspace{5pt} \beta = \dfrac{\rho}{2(1 - \rho)(1 + (p-1)\rho)} = \dfrac{\rho}{2}(1 + o(1))
    \end{equation}

\end{proposition}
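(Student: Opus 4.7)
The plan is to compute the posterior directly via Bayes' rule with the uniform prior, factor the Gaussian likelihood over edges, invert the $p\times p$ cross-correlation matrix, and reindex the exponent to match the claimed Hamiltonian.

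Since the prior on $\bi^*$ is uniform on $\mathcal{S}_n^{p-1}$, Bayes gives $\Proba_{post}(\bigma) \propto p\bigl((\bi^*)^{-1}(G) \mid \bi^* = \bigma\bigr)$. Conditionally on $\bi^* = \bigma$, the observation is a deterministic reindexing of the underlying correlated Gaussian, whose joint density factors over edges $f \in E$ as $\prod_{f \in E} \varphi\bigl((G^{(i)}_f)_{i}\bigr)$, where $\varphi$ is the centered $p$-variate Gaussian density on $\R^p$ with covariance $\Sigma = (1-\rho) I_p + \rho J_p$ (with $J_p$ the all-ones $p\times p$ matrix). Since $\Sigma$ is a rank-one perturbation of a multiple of the identity, Sherman--Morrison (equivalently, diagonalizing $J_p$, whose nonzero eigenvalue is $p$ with eigenvector $\mathbf{1}$) yields
\begin{equation*}
\Sigma^{-1} = \frac{1}{1-\rho}\, I_p - \frac{\rho}{(1-\rho)(1+(p-1)\rho)}\, J_p.
\end{equation*}

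Plugging this into the exponent of the likelihood and expanding, I would split the resulting quadratic form into two pieces according to whether the Gaussian indices $i,j$ coincide. The diagonal ($i=j$) contribution reduces, after a trivial reindexing of the dummy edge variable, to $\sum_i \sum_{g \in E} (G^{(i)}_g)^2$, which is $\bigma$-independent and therefore absorbed into the partition function $\prt$. The off-diagonal ($i \neq j$) contribution produces, in the exponent, the prefactor $\frac{\rho}{2(1-\rho)(1+(p-1)\rho)}$ times a sum over $f \in E$ and $i \neq j$ of products of two Gaussian weights indexed by $\sigma_i(f)$ and $\sigma_j(f)$ respectively. This immediately identifies $\beta$, and the Taylor expansion $(1-\rho)(1+(p-1)\rho) = 1 + O(\rho)$ yields $\beta = \frac{\rho}{2}(1+o(1))$.

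The remaining step is a change of summation variable on the first Gaussian factor of each ordered pair $(i,j)$, which rewrites the surviving sum in the claimed form $\sum_{e \in E} \sum_{i \neq j} G^{(i)}_e G^{(j)}_{\sigma_{ij}(e)}$, with the composite permutation $\sigma_{ij}$ emerging from composing the reindexings that relate the underlying $G$ to the observation $(\bi^*)^{-1}(G)$ with the hypothetical alignment $\bigma$. A useful sanity check is that $\sigma_{ij} = \Id$ precisely when $\bigma = \bi^*$, so that $\mathcal{H}(\bi^*) = -\sum_e \sum_{i \neq j} G^{(i)}_e G^{(j)}_e$ is large and negative (the underlying weights $G^{(i)}_e$ and $G^{(j)}_e$ being positively correlated), confirming that the posterior concentrates near the truth. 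I expect this final bookkeeping step to be the only delicate point: tracking how each permutation acts on the edge labels (underlying versus observed, and left versus right composition) is the main source of possible sign or conjugation errors, whereas the rest of the argument is routine Gaussian calculation.
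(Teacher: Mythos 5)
Your proposal is correct and follows essentially the same route as the paper: apply Bayes with the uniform prior, factor the Gaussian likelihood over edges, invert the $p\times p$ covariance $\Sigma = (1-\rho)I_p + \rho J_p$ (the paper states the off-diagonal entry of $\Sigma^{-1}$ directly; you derive it via Sherman--Morrison), discard the $\bigma$-independent diagonal block into the partition function, and reindex to introduce $\sigma_{ij}$. Your explicit Sherman--Morrison computation gives $(\Sigma^{-1})_{ij} = -\rho/\bigl((1-\rho)(1+(p-1)\rho)\bigr) = -2\beta$ for $i\neq j$, which is the sign-correct statement (the paper's appendix writes $(M^{-1})_{ij}=2\beta$, a typo that cancels against a dropped minus in its displayed $\mathcal{H}$), and your sanity check that $\sigma_{ij}=\Id$ at $\bigma=\bi^*$ is the right way to validate the final bookkeeping.
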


\begin{proof}
    By definition, the probability density function for $(\bigma, G^{(1)}, \ldots, G^{(p)})$ is proportional to

    \begin{equation}
                p(\bigma, G^{(1)}, \ldots, G^{(p)})  = \frac{1}{(n!)^{p-1}} \prod_{e \in E} \exp\left( -\frac{1}{2}\sum_{1 \leq i,j \leq p} (M^{-1})_{ij} G^{(i)}_{e} G^{(j)}_{e}\right)
    \end{equation}
    where $M = (1 - \rho)I_p + \rho J_p$ is our covariance matrix. Consequently, if we observe the graphs $H^{(i)} = (\pi_i^*)^{-1}(G^{(i)})$, the distribution $\Proba_{post}$ is given by

    \begin{equation}
        \begin{split}
            \Proba_{post}(\bigma) & = \Proba(\boldsymbol{\pi}^* = \bigma | H^{(1)}, \ldots, H^{(p)}) \\
            & = \frac{1}{\prt'} p(\bigma, \sigma_1(H^{(1)}), \ldots, \sigma_p(H^{(p)})) \\
            & = \frac{1}{\prt'} \prod_{e \in E} \exp\left( -\frac{1}{2}\sum_{1 \leq i,j \leq p} (M^{-1})_{ij} G^{(i)}_{(\pi_i^*)^{-1} \sigma_i(e)} G^{(j)}_{(\pi_j^*)^{-1} \sigma_j(e)}\right) \\
            & = \frac{1}{\prt} \exp \left( - \frac{1}{2}\sum_{e \in E} \sum_{1 \leq i \neq j \leq p} (M^{-1})_{ij} G^{(i)}_{(\pi_i^*)^{-1} \sigma_i(e)} G^{(j)}_{(\pi_j^*)^{-1} \sigma_j(e)}\right)
        \end{split}
    \end{equation}

    since the product over the diagonal terms ($i=j$) is independent from $\bigma$. (Here, $\mc{Z}, \mc{Z'}$ are quantities which may depend upon the $(H^{(i)})$ but not upon $\bigma$.) However, for $i \neq j$, $(M^{-1})_{ij} = - 2 \beta$; we thus obtain our desired expression, with

    \begin{equation}
        \begin{split}
            \mathcal{H}(\sigma) & = -\sum_{e \in E} \sum_{1 \leq i \neq j \leq p}  G^{(i)}_{(\pi_i^*)^{-1} \sigma_i(e)} G^{(j)}_{(\pi_j^*)^{-1} \sigma_j(e)} \\
            & = -\sum_{e \in E} \sum_{1 \leq i \neq j \leq p} G^{(i)}_{e} G^{(j)}_{\sigma_{ij}(e)}
        \end{split}
    \end{equation} \qed
\end{proof}

This Hamiltonian is quite descriptive, and invites a natural visual interpretation of the graph alignment problem: we have $p$ weighted graphs which we "stick" together with our alignment $\bigma$, and each edge has a fixed "spin" such that edges with similar spins try to stick to each other. 

From a technical perspective however, this Hamiltonian is not quite what we want. Recall that our ultimate goal is to understand to what extent the measure $\Proba_{post}$ concentrates around any given point; and, if it is to be concentrated around a point, we should expect that point to be the ground truth $\pi^*$. As a result, it makes sense to renormalise our system, writing

\begin{equation}
    \Proba_{post}(\bigma) = \frac{1}{Z} e^{- \beta V(\bigma)}
\end{equation}

where the potential $V(\bigma)$ is equal to
\begin{equation}
\begin{split}
    V(\bigma)  & = \mathcal{H}(\bigma) - \mathcal{H}(\pi^*) \\
    & = \sum_{e \in E, 1 \leq i \neq j \leq p} G_e^{(i)} (G_e^{(j)} - G_{\sigma_{ij}(e)}^{(j)}) \\
    & = \sum_{\underset{\sigma_{ij}(e) \neq e}{e \in E, 1 \leq i \neq j \leq p}}G_e^{(i)} G_e^{(j)} - \sum_{\underset{\sigma_{ij}(e) \neq e}{e \in E, 1 \leq i \neq j \leq p}}G_e^{(i)} G_{\sigma_{ij}(e)}^{(j)} \\
    & \defeq - V_{\mathrm{diag}}(\bigma) + V_{\mathrm{off}}(\bigma)
\end{split}
\end{equation}
We have split up the potential $V$ into two terms for a couple of reasons. The first one is that, viewing $V$ as a quadratic form of $G \in \R^{E} \otimes \R^p$, $V_{\mathrm{diag}}$ contains the block-diagonal terms while $V_{\mathrm{off}}$ contains the off-diagonal blocks. (The expression of $V$ as a quadratic form is made explicit in Appendices \ref{quad} and \ref{quad2}.) It is typical, when studying quadratic forms of Gaussian vectors, to separate the two; they tend to require different tools. The second reason is that $V_{\mathrm{diag}}(\bigma)$ concentrates around its mean quite uniformly in $\bigma$, in a sense which we will make explicit in a moment.

Before moving on, let us briefly explain the structure of the rest of this section. Our ultimate goal is to prove that $\Proba_{post}$ concentrates around the true alignment $\bi^*$ (resp. is spread out over the entire state space) if $\eta > 0$ (resp. $\eta < 0$). Doing this will require a few steps:

\begin{itemize}
    \item first, we will do a bit of combinatorics in order to understand the structure of $(\mathcal{S}_n^{p-1}, d)$ a bit better;
    \item next, we shall compute the "annealed free energy" $\log \E[Z]$; 
    \item finally, we will use this to compute the quenched free energy $\E[\log Z]$.
\end{itemize}

This last step is the harder one; as is fairly standard in systems derived from Gibbs measures, once we have $\E[\log Z]$ it is not too hard to obtain most relevant properties of the distribution $\Proba_{post}$.

\subsection{Some combinatorics}
For the rest of this section, we will define $d_{ij}(\bigma) =\# \{ u \in \llb 1, n \rrb,  \sigma_{ij}(u) = u \}$: we will see that this combinatorial quantity appears a fair amount in our analysis, as it is closely linked to the number of terms in the sums defining $V_{\mathrm{diag}}$ and $V_{\mathrm{off}}$. In this section, we control the behaviour of $(d_{ij}(\bigma))$ as a function of $\bigma \in \mathcal{S}_n^{p-1}$.

\begin{theorem} \label{permcount}
    Let $(d_{ij})_{1 \leq i \neq j \leq p}$ be positive integers, with $d_{ij} = d_{ji}$ for any $i,j$.  We set

    \begin{equation}
        F((d_{ij})) = \# \{\bigma \in \mathcal{S}_n^{p-1}, \forall 1 \leq i \neq j \leq p, d_{ij}(\bigma) \geq d_{ij} \}
    \end{equation}
    
    Then, for any permutation $\tau \in \mathcal{S}_p$,

    \begin{equation}
                F((d_{ij})) \leq \frac{(n!)^{p-1}}{(\ds\max_{j < 2} d_{\tau(j)\tau(2)})!(\ds\max_{j < 3} d_{\tau(j) \tau(3)})! \ldots (\ds\max_{j < p} d_{\tau(j) \tau(p)})!}
    \end{equation}
\end{theorem}

\begin{proof}
It is sufficient to handle the case where $\boldsymbol{\pi}^* = \Id$; the general case follows by replacing $\bigma$ by $\boldsymbol{\pi}^* \circ \bigma$ in the following proof.

We will begin by noting that $F$ is a symmetric function of $(d_{ij})$, in the sense that if $\tau \in \mathcal{S}_p$, $F((d_{\tau(i)\tau(j)})) = F((d_{ij}))$. As a result, we just need to show the theorem when $\tau = \Id$; the general statement then follows.
    
    We will now note that, for any $(d_{ij})$, setting $Y_{ij}(\bigma) \defeq \{ u \in \llb 1, n \rrb, \sigma_{ij}(u) = u \}$,

    \begin{equation} \label{puples}
        F((d_{ij})) = \frac{1}{n!}\# \{\bigma \in \mathcal{S}_n^{p}, \forall 1 \leq i \neq j \leq p, |Y_{ij}(\bigma)| \geq d_{ij} \}
    \end{equation}

    (i.e. counting the $p$-tuples of permutations where we do \textbf{not} enforce that $\sigma_1 = \Id$). To see this, note that the map 

    \begin{align*}
        \mathcal{S}_n^{p} & \longrightarrow  \mathcal{S}_n^{p-1} \simeq \{ \Id \} \times \mathcal{S}_n^{p-1} \\
    (\sigma_1, \sigma_2, \ldots, \sigma_p) & \longmapsto (\Id, \sigma_1^{-1} \sigma_2, \ldots, \sigma_1^{-1} \sigma_p)
    \end{align*}

    preserves the values of $(d_{ij})_{1 \leq i \neq j \leq p}$, and any given element of $\mathcal{S}_n^{p-1}$ has exactly $n!$ preimages. We will therefore count the number of elements of the subset defined in (\ref{puples}).

    Fix $(d_{ij})_{i\neq j}$. We are going to bound the number of possible choices for $\sigma_1$, then bound the number of possible choices for $\sigma_{2}$ given a choice of $\sigma_{1}$ and so on; finally bounding the number of choices for $\sigma_{p}$ given a choice of $\sigma_{1}, \ldots, \sigma_{p-1}$.

    There are $n!$ choices for $\sigma_{1}$. In order to fix $\sigma_{2}$ given $\sigma_{1}$, it is sufficient to determine the set $X_2 = \llbracket 1, n \rrbracket \setminus Y_{1 2}(\sigma)$ of points where $\sigma_{2} \neq \sigma_1$, as well as $\sigma_{2}|_{X_2}^{\sigma_1(X_2)}$, giving at most 

    \begin{equation}
        \binom{n}{n - d_{12}}(n - d_{12})! = \frac{n!}{d_{12}!}
    \end{equation}
    possible choices.

    Recursively, given $\sigma_{1}, \sigma_{2}, \ldots, \sigma_{l-1}$, in order to fix $\sigma_{l}$, it is sufficient to choose $j < l$, and determine the set $X_{l} = \llbracket 1, n \rrbracket \setminus Y_{jl}(\sigma)$ of points where $\sigma_{l} \neq \sigma_{j}$ as well as the restriction $\sigma_{l}|_{X_{l}}^{\sigma_{j}(X_{l})}$, giving at most $\frac{n!}{d_{jl}!}$ possibilities. Since $j$ can be chosen arbitrarily, we can pick $j$ such that $d_{jl}$ is maximal. The total number of possible choices is thus at most

    \begin{equation}
        \frac{1}{n!} \cdot n! \cdot \frac{n!}{(\ds\max_{j < 2} d_{j2})!} \cdot \ldots \cdot \frac{n!}{(\ds\max_{j < p} d_{jp})!}
    \end{equation}
    giving us the desired inequality and concluding the proof of Theorem \ref{permcount}.
\qed
\end{proof}

Theorem \ref{permcount} can be used by itself, but the introduction of the quantities $\max_{j<k} d_{\tau(j)\tau(k)}$ for $1 < k \leq p$ makes it somewhat unwieldy to use. Instead, we will mostly use the following corollary.

\begin{corollary}
 \label{usable}
     Let $(d_{ij})_{1 \leq i \neq j \leq p}$ be positive integers, with $d_{ij} = d_{ji}$ for any $i,j$. Then, 
    
    \begin{equation} \label{usablef}
        F((d_{ij})) \leq \frac{(n!)^{p-1}}{\left( \ds\prod_{1 \leq i \neq j \leq p} (d_{ij}!) \right)^{\frac{1}{p}}}
    \end{equation}
\end{corollary}
\begin{proof}
For $\tau \in \mathcal{S}_p$, set $D_{i}(\tau) = \max_{j < i} d_{\tau(j)\tau(i)}$. In order to deduce Corollary \ref{usable} from Theorem \ref{permcount}, we need to show that, for some choice of $\tau$,

\begin{equation}
    \prod_{i=2}^p D_i(\tau)! \geq \left( \prod_{1 \leq i \neq j \leq p} (d_{ij}!) \right)^{\frac{1}{p}}
\end{equation}

In order to show this, we will use the probabilistic method, showing that if $\tau \in \mathcal{S}_p$ is chosen uniformly at random, then

\begin{equation} \label{?}
     \frac{1}{p} \sum_{1 \leq i \neq j \leq p} \log (d_{ij}!) \leq \E_\tau\left[ \sum_{i=2}^p \log (D_i(\tau)!) \right]
\end{equation}

Indeed, more generally, if $\phe$ is any increasing function,

\begin{equation} \label{yeah}
    \begin{split}
        \E_\tau\left[ \sum_{i=2}^p \phe(D_i(\tau)) \right] & = \E_\tau\left[ \sum_{i=2}^p \max_{1 \leq j \leq i-1} \phe(d_{\tau(i) \tau(j)}) \right]\\
        & \geq \E_\tau\left[ \sum_{i=2}^p \frac{1}{i-1} \sum_{j=1}^{i-1} \phe(d_{\tau(i) \tau(j)}) \right] \\
        & = \sum_{i=2}^p \frac{1}{i-1} \sum_{j=1}^{i-1} \E_\tau[ \phe(d_{\tau(i) \tau(j)})]
    \end{split}
\end{equation}

However, for any $i,j \in \llb 1, p \rrb$, $\E_\tau[\phe(d_{\tau(i) \tau(j)})] = \dfrac{1}{p(p-1)}\ds \sum_{1 \leq i \neq j \leq p} \phe(d_{ij})$. Plugging this into (\ref{yeah}), we obtain

\begin{equation}
     \frac{1}{p} \sum_{1 \leq i \neq j \leq p} \phe(d_{ij}) \leq \E_\tau\left[ \sum_{i=2}^p \phe(D_i(\tau)) \right] 
\end{equation}

and this holds in particular for the function $\phe(d) = \log(d!)$, proving the corollary. \qed
\end{proof}

\subsection{The annealed free energy}

The propositions in this subsection are consequences of some sharp controls on the behaviour of their Gaussian components; their proofs are all deferred to the appendix.

In order to compute $\log \E[Z]$, we begin by computing $V_{\mathrm{diag}}$ and $V_{\mathrm{off}}$.
The behaviour of $V_{\mathrm{diag}}$ is the simplest to bound: it is essentially deterministic.

\begin{proposition}
\label{weaklemma}
    For some $c > 0$, the following property $(\mathcal{C})$ {holds with probability at least $1 - \exp(-c n \sqrt{\log n})$}:

    \begin{equation} \label{weakprob}
        \forall \bigma \in \mathcal{S}_n^{p-1}, \left|V_{\mathrm{diag}}(\bigma) - \E[V_{\mathrm{diag}}(\bigma)]\right| \leq  (\log n)^{\frac{1}{4}} n^{\frac{3}{2}}
    \end{equation}
    Consequently, for any $\bigma \in \mathcal{S}_n^{p-1}$,

    \begin{equation}
         V(\bigma)\1_\mathcal{C} =  \rho  \sum_{1 \leq i \neq j \leq p} \left( \binom{n}{2} - \binom{d_{ij}(\bigma)}{2} \right)\1_\mathcal{C} + V_{\mathrm{off}}(\bigma) \1_{\mathcal{C}} + O(n^{\frac{3}{2}} (\log n)^{\frac{1}{4}})
    \end{equation}
    where the error term is bounded uniformly in $\bigma$.
\end{proposition}

The fluctuations of  $V_{\mathrm{off}}$ are somewhat less nice but can still be controlled.

\begin{proposition} \label{cumulants}
    Let $\bigma \in \mathcal{S}_n^{p-1}$. Then,
    \begin{equation}
        \log \E[e^{-\beta V_{\mathrm{off}}(\bigma)}] = \beta^2\dsum_{1 \leq i \neq j \leq p} \left(\ds\binom{n}{2} - \ds \binom{d_{ij}(\bigma)}{2} \right) + O(n)
    \end{equation}
where, again, the error term is bounded uniformly in $\bigma$.
\end{proposition}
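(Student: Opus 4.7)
The plan is to recognise $V_{\mathrm{off}}(\bigma)$ as a centered Gaussian quadratic form and then apply the classical log-determinant formula for the Laplace transform. Writing $G = (G_e^{(i)})_{e \in E, 1 \leq i \leq p}$ as a centered Gaussian vector on $\R^E \otimes \R^p$ with covariance $\Sigma = I_E \otimes M$ (where $M$ has diagonal $1$ and off-diagonal $\rho$), one checks that $V_{\mathrm{off}}(\bigma) = G^T A G$ for the symmetric matrix $A$ with entries $A_{(e,i),(f,j)} = -\1\{i \neq j,\, e \neq f,\, \sigma_{ij}(e) = f\}$. Since $A$ has zero diagonal, $\E[V_{\mathrm{off}}] = \Tr(\Sigma A) = 0$, and the MGF admits the Taylor expansion
\begin{equation}
    \log \E\!\left[e^{-\beta V_{\mathrm{off}}(\bigma)}\right] = -\tfrac{1}{2}\log\det(I + 2\beta \Sigma A) = \sum_{k \geq 2}\frac{(-2\beta)^k}{2k}\Tr\!\left((\Sigma A)^k\right),
\end{equation}
which converges because $\|\Sigma A\|_{op} = O(1) \ll 1/\beta \sim \sqrt{n/\log n}$.

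The heart of the proof is the analysis of the $k = 2$ term. I would expand $\Tr((\Sigma A)^2) = \tfrac{1}{2}\Var(V_{\mathrm{off}})$ via Isserlis' formula, which produces two nontrivial Wick pairings. In each, the dominant contribution comes from matching the species indices so that every $M$-factor equals $1$, and a direct count gives $|E| - |\Fix_E(\sigma_{ij})|$ terms for each pair $i \neq j$. Writing $|\Fix_E(\sigma_{ij})| = \binom{d_{ij}(\bigma)}{2} + c_2(\sigma_{ij})$ where $c_2 \leq n/2$ counts edges swapped by $2$-cycles of $\sigma_{ij}$, these main contributions sum to $\sum_{i \neq j}\bigl(\binom{n}{2} - \binom{d_{ij}(\bigma)}{2}\bigr) + O(n)$. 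The remaining Wick pairings all carry at least one factor $M_{ab} = \rho$; using the crude bound that, for each $(e,i,j)$, there are at most $p^2$ valid species pairs $(k,l)$ with $\sigma_{kl}(\sigma_{ij}(e)) = e$, their total contribution to $\Tr((\Sigma A)^2)$ is $O(\rho n^2)$, so after multiplication by $\beta^2 \sim \rho^2$ the resulting error is $O(\rho^3 n^2) = o(n)$ and is absorbed by the $O(n)$ term.

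For the tail $k \geq 3$, the natural tool is the inequality $|\Tr(B^k)| \leq \|B\|_{op}^{k-2}\|B\|_F^2$ applied to the symmetric matrix $B = \Sigma^{1/2} A \Sigma^{1/2}$. The block structure of $A$ (each nonzero block is a permutation matrix minus a diagonal correction) gives $\|A\|_{op} \leq 2p$, whence $\|B\|_{op} \leq \|\Sigma\|_{op}\|A\|_{op} = O(1)$, while $\|B\|_F^2 = \Tr((\Sigma A)^2) = O(n^2)$ from the $k = 2$ analysis. The tail therefore sums to $O(\beta^3 n^2) = O(n^{1/2}(\log n)^{3/2}) = o(n)$, uniformly in $\bigma$. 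The main delicate point I anticipate is the uniform bookkeeping of the $\rho$-weighted pairings in the $k = 2$ step: the bound must hold for every $\bigma$, so one cannot rely on any typicality of the induced permutations $\sigma_{ij}$, but the combination of the crude $p^2$ count and the smallness of $\rho$ saves the day.
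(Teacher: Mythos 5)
Your proposal is correct and follows essentially the same route as the paper: identify $V_{\mathrm{off}}$ as a centered Gaussian quadratic form, invoke the log-determinant formula for its MGF, observe the $k=1$ trace vanishes, extract $\Tr((\Sigma M_{\bigma})^2)$ as the main term (with the $\binom{n}{2}-\binom{d_{ij}}{2}$ count and a uniform $O(\rho n^2)$ correction from the off-diagonal of $\Sigma$), and absorb the $k\geq 3$ tail via an operator-norm/Frobenius-norm bound giving $O(\beta^3 n^2)=o(n)$. Your phrasing of the $k=2$ analysis via Wick pairings is just a re-expression of the direct computation of $\Tr(M_{\bigma}^2)$ that the paper performs, and your split of $|\Fix_E(\sigma_{ij})|$ into $\binom{d_{ij}}{2}$ plus an $O(n)$ contribution from transpositions matches the paper's Lemma \ref{Dij}.
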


Putting these together allows us to obtain the behaviour of the annealed free energy. To this end, for any $\bi \in \mc{S}_n^{p-1}$ and $0 < r \leq 1$, we will define

    \begin{equation}\label{zr}
        Z_r(\bi) = \sum_{\bigma \in B(\bi, r)} e^{-\beta V(\bigma)}.
    \end{equation}

{
\begin{proposition} \label{energy}
    For any $0 < r \leq 1$,

    \begin{equation}
        \log \E[Z_r(\bi^*) \1_\mathcal{C}] \leq (p-1) n \log n \max_{0 \leq \rho \leq r} \phe(\rho) + O(n(\log n)^{\frac{3}{4}})
    \end{equation}
    and 

    \begin{equation}
    \begin{split}
        \log \E[(Z - Z_r(\bi^*))\1_{\mc{C}} ] & \leq (p-1) n \log n \left( \frac{p-1}{p} \max_{r' \in [0,1]} \phe(r') + \frac{1}{p} \max_{r' \in [\frac{r}{p-1}, 1] } \phe(r') \right) \\
        & + O(n (\log n)^\frac{3}{4})
        \end{split}
    \end{equation}
    where $\phe(r') = (1 + \eta)(r')^2 - (1 + 2 \eta)r'$ for $r' \in [0,1]$.

\end{proposition}

}

\subsection{The quenched free energy}

Recall that we have written the parameter $\rho$ of our system as $\rho = \sqrt{\frac{8(1 + \eta) \log n}{p n}}$; we wish to estimate the quenched free energy $\E[\log Z]$ as a function of $\eta$. This quantity captures the typical behaviour of $Z$ better than the annealed free energy; controlling it is the difficult part in bounding $\Proba_{post}$. In these systems, we cannot usually expect to have $\E[\log Z] \sim \log \E[Z]$ in general; however, when $\eta > 0$ the system fluctuates sufficiently little for this to hold.

\begin{lemma} \label{easyquenched}
    In general, we have: 
    \begin{equation}
        0 \leq \E[(\log Z)\1_{\mathcal{C}}] \leq \log \E[Z\1_{\mathcal{C}}](1 + o(1)) 
    \end{equation}
    In particular, if $\eta > 0$,

    \begin{equation}
        \E[(\log Z)\1_{\mc{C}}]= o(n \log n).
    \end{equation}
\end{lemma}

\begin{proof}

    {The first inequality simply follows from the fact that
    \begin{equation}
        Z \geq e^{-\beta V(\bi^*)} = 1
    \end{equation}

    since $V(\bi^*) = \mc{H}(\bi^*) - \mc{H}(\bi^*) = 0$.
    
     The second inequality follows from Jensen's inequality: 
     
     \begin{equation}
         \E[(\log Z)\1_\mc{C}] \leq \Proba(\mc{C}) \log \E[Z | \mc{C}] = \log \E[Z \1_{\mc{C}}](1 + o(1)).
     \end{equation}

     The final inequality follows from the previous proposition.
     \qed}
\end{proof}

The difficult part comes when $\eta < 0$: typically, we believe that $\E[\log Z]$ will be significantly smaller than $\log \E[Z]$. To get around this, we will take a small value $\eta = (\log n)^{-\frac{1}{5}}$ in the hope that the annealed free energy still provides some information in this regime. And indeed:

\begin{proposition} \label{hardquenched}
    If $\eta = -(\log n)^{-\frac{1}{5}}$, 

    \begin{equation}
        \E[\log Z] \sim \log \E[Z \1_{\mathcal{C}}] \sim (p-1)|\eta| n \log n.
    \end{equation}
\end{proposition}

\begin{proof}
    The idea is the following: we will show that
    \begin{itemize}
        \item $\log Z$ is strongly concentrated around its mean, via standard Gaussian concentration theorems;
        \item $\log Z$ has a somewhat reasonable probability of being larger than $\log \E[Z\1_{\mathcal{C}}] (1 - o(1))$
    \end{itemize}
    implying that $\E[\log Z]$ and $\log \E[Z]$ aren't too far from each other. Explicitly:

    \begin{lemma} \label{concentration}
        For some constant $c_0 > 0$, if $0 \leq K \leq n^{\frac{3}{2}}$,

        \begin{equation}
            \Proba(|\log Z - \E[\log Z]| > K) \leq 2\exp(- c_0 \frac{K^2}{n \log n}).
        \end{equation}
    \end{lemma}

    \begin{lemma} \label{technical}
        With probability at least $\exp(-O(n))$, 

        \begin{equation}
            (\log Z) \1_{\mathcal{C}} \geq (p-1)|\eta| n \log n - (p-1) n (\log n)^{\frac{3}{5}}.
        \end{equation}
    \end{lemma}

    The proof of these lemmas is deferred to the appendix. 

    With these results in hand, we may note that
\begin{equation}
\begin{split}
     \Proba((\log Z)\1_{\mathcal{C}} >  \E[\log Z] + n (\log n)^{\frac{3}{5}} ) &\leq 2\exp( -  C' n (\log n)^{\frac{1}{5}}) + \Proba(\mathcal{\overline{C}})\\
     & < \exp(- O(n)) \\
     & \leq \Proba((\log Z)\1_{\mathcal{C}} \geq (p-1)|\eta| n \log n - (p-1) n (\log n)^{\frac{3}{5}})
     \end{split}
\end{equation}

so that 

\begin{equation}
    \begin{split}
         \E[\log Z] + n (\log n)^{\frac{3}{5}}& \geq (p-1)|\eta| n \log n - (p-1) n (\log n)^{\frac{3}{5}} \\
         & \geq \log \E[Z\1_\mathcal{C}] + O(n (\log n)^{\frac{3}{4}}). \\
    \end{split}
\end{equation}
This proves the difficult part of Proposition \ref{hardquenched}. The full proposition will follow if we prove that $\E[\log Z] \sim \E[(\log Z)\1_{\mc{C}}]$ since, in this case, we may apply Lemma \ref{easyquenched} and

\begin{equation}
    \E[\log Z] \sim \E[(\log Z)\1_\mc{C}] \leq \log \E[Z \1_\mc{C}](1 + o(1)).
\end{equation}

However, if we define $t_0 > 0$ by the condition $\ds\int_{t_0}^{+\infty} \exp \left(- c_0 \frac{u^2}{n \log n} \right) du = \Proba(\overline{\mc{C}})$, then by Lemma \ref{concentration}:

\begin{equation} \label{idkgarbage}
    \begin{split}
        \E[\log Z] - \E[(\log Z)\1_\mc{C}] & = \E[(\log Z)\1_{\overline{\mc{C}}}]\\
        & \leq 2\int_{t_0}^{+\infty} (u + \E[\log Z]) \exp \left(- c_0 \frac{u^2}{n \log n} \right) du \\
        & \ll \E[\log Z].
    \end{split}
\end{equation}
This concludes the proof.
     \qed
\end{proof}

With this result, we have most of the required elements to prove Theorem \ref{mainthm}.

\begin{proof}[Proof of Theorem \ref{mainthm}]
    Assume that $\eta > 0$: we will begin by showing the existence of an almost exact estimator. In order to apply Corollary \ref{estimation}, we simply need to prove that  

    \begin{equation}
        \Proba_{post}(B(\bi^*, \eps)) \toPr 1 \text{ i.e. } Z_\eps(\bi^*) \underset{\Proba}{\sim} Z.
    \end{equation}

    This is true since, by Proposition \ref{energy},

    \begin{equation} 
    \begin{split}
    \log \E[(Z - Z_{\eps}(\bi^*)) \mathbbm{1}_{\mathcal{C}}] \leq -\frac{\eps}{p}  n \log n(1 + o(1))
        \end{split}
    \end{equation}
and so $Z - Z_{\eps}(\bi^*) \ll_{\Proba} 1 \leq  Z$. 

Conversely, when $\eta = -(\log n)^{-\frac{1}{5}}$, our job is to prove that for any $r < 1$, $\sup_{\bi \in \mathcal{S}_n^{p-1}} Z_r(\bi) \ll_{\Proba} Z$: by Corollary \ref{estimation}, this will imply that partial alignment is intractable. Here:

\begin{equation}
    \log \E[Z_r(\bi^*)\1_{\mathcal{C}}] \leq c_r (p-1)n (\log n)^{\frac{4}{5}}(1 + o(1))
\end{equation}
    for some $0 \leq c_r < 1$, by Proposition \ref{energy}. However, by Proposition \ref{hardquenched} and Lemma \ref{concentration}, 

    \begin{equation}
        \log Z \geq (p-1)|\eta|n\log n(1 - o_{\Proba}(1)) = (p-1)n (\log n)^{\frac{4}{5}}(1 -o_{\Proba}(1))
    \end{equation}

    so that { $Z_r(\bi^*) \ll_{\Proba} Z$.

    This is close to what we wanted, but not quite there: we still need to go from $Z_r(\bi^*) \ll_{\Proba} Z$ to the uniform bound $\sup_{\bi \in \mathcal{S}_n^{p-1}} Z_r(\bi) \ll_{\Proba} Z$. This requires another technical argument which we defer to Section \ref{sec}. With this extra argument, the case $\eta < 0$ is complete.}
    
    Finally, as for when $\eta > 0$, we still wish to show that there exists an \textbf{exact} estimator. For this, we will require the following extra lemma, whose proof is given in the appendix.

    \begin{lemma} \label{exact}
    Assume that $\eta > 0$. Then, there exists $\eps > 0$ such that, with high probability, for any $\bigma \in B(\bi^*, \eps)$ which is different from $\bi^*$, $V(\bigma) > 0$.
\end{lemma}
Since, for any $\bigma \notin B(\bi^*, \eps)$, $e^{-\beta V(\bigma)} \leq Z - Z_{\eps} \ll_{\Proba} 1$, this means that with high probability,
\begin{equation}
    \bi^* = \underset{\bi \in \mathcal{S}_n^{p-1}}{\arg \max} \Proba_{post}(\bi)
\end{equation}
concluding the proof of our main theorem.
\qed
\end{proof}

\section{The Erdős–Rényi model: proof of Theorem \ref{erthm}} \label{erproof}

In this section, we will provide a proof of Theorem \ref{erthm}. We define $\Gamma = (\Gamma^{(1)}, \ldots, \Gamma^{(p)}) = (\bi^*)^{-1}(G)$ (the observed graph), such that $\Proba_{post} = \Proba(\cdot | \Gamma)$. 

\subsection{A description of the system}

As in the Gaussian case, we will begin by evaluating the Hamiltonian of our system, and formulate a proof strategy based upon the behaviour of said Hamiltonian.

\begin{proposition} \label{hamiltonerdos}
Let $\bi = (\pi_1 = \Id , \pi_2, \ldots, \pi_p) \in \mathcal{S}_n^{p-1}$. For any subset $X \subseteq \llb 1, p \rrb$, we define $e_X = e_X(\bi)$ to be the number of edges $e \in E$ "of type $X$"; i.e. such that for any $1 \leq i \leq p$, $e \in \pi_i(\Gamma^{(i)})$ if and only if $i \in X$. (In this sense, $\sum_X e_X = \binom{n}{2}$.) 

With this setup, we may write

\begin{equation}
    \Proba_{post}(\bi^* = \bi) = \frac{1}{Z} e^{- \beta' \mathcal{H}(\bi)(1 + o(1))} \text{ with } \beta' = \log n; { \mathcal{H}(\bi) = -\sum_{X \neq \emptyset} e_X(\bi)}
\end{equation}
Here, $Z$ is a random variable which does not depend on $\bi$, and the error term $o(1)$ is bounded uniformly in $\bi$.

\end{proposition}

In other words, if we "glue" together all of our graphs using the correspondence $\bi$, the associated energy for this configuration is equal to the total number of edges in the combined graph (plus a smaller, negligible term). 

Also note that, contrary to the Gaussian case, we are now in a low-temperature configuration: we care very much about correctly matching our edges since we have so much fewer of them. 

\begin{proof}
Let $\bi \in \mathcal{S}_n^{p-1}$, and let $(A^{(1)}, \ldots, A^{(p)})$ be graphs over $\llb 1, n \rrb$. Then:

\begin{equation}
    \begin{split}
        \Proba(\bi^* = \bi, G^{(1)} = A^{(1)}, \ldots, G^{(p)} = A^{(p)}) & = \frac{1}{(n!)^{p-1}} \Proba( G^{(1)} = A^{(1)}, \ldots, G^{(p)} = A^{(p)} | \bi^* = \bi) \\
        & = \frac{1}{(n!)^{p-1}} \prod_{e \in E} \Proba(\forall i \in \llb 1, p \rrb, e \in G^{(i)} \Leftrightarrow e \in A^{(i)}) \\
    \end{split}
\end{equation}

since the variables $((\1_{e \in G^{(1)}}, \ldots, \1_{e \in G^{(p)}}))_{e \in E}$ are all independent, and independent from $\bi^*$. In fact, we know the laws of these variables explicitly, and so we may write:

\begin{equation}
    \Proba(\forall i \in \llb 1, p \rrb, e \in G^{(i)} \Leftrightarrow e \in A^{(i)}) = \begin{cases}
            1 - \frac{\lambda (1 - (1 - s)^{p})}{n} & \text{ if } \forall 1 \leq i \leq p, e \notin A^{(i)} \\
            \frac{\lambda}{n}s^k (1-s)^{p-k} & \text{ if } \# \{ 1 \leq i \leq p, e \in A^{(i)} \} = k > 0
        \end{cases}
\end{equation}

Consequently, if $B^{(1)}, \ldots, B^{(p)}$ are graphs over $\llb 1, n \rrb$:

\begin{equation}
\begin{split}
     \Proba & (\bi^* = \bi | \Gamma^{(1)} = B^{(1)}, \ldots, \Gamma^{(p)} = B^{(p)}) \\
     & = \frac{1}{Z'} \Proba(\bi^* = \bi, G^{(1)} = \pi_1(B^{(1)}), \ldots, G^{(p)} = \pi_p(B^{(p)})) \\
\end{split}
\end{equation}

meaning that

\begin{equation}
\begin{split}
     \Proba_{post}(\bi^* = \bi)& = \Proba(\bi^* = \bi | \Gamma) \\
     & = \frac{1}{Z'} \left( 1 - \frac{\lambda (1 - (1 - s)^{p})}{n} \right)^{e_{\emptyset}(\bi)} \prod_{X \subseteq \llb 1, p \rrb \setminus \emptyset} \left(\frac{\lambda s^{|X|} (1- s)^{p - |X|}}{n} \right)^{e_X(\bi)}
\end{split}
\end{equation}

Since $\sum_{X \subseteq \llb 1, p \rrb }e_X(\bi) = \binom{n}{2}$ is independent from $\bi$, we may therefore rewrite:

\begin{equation}
\begin{split}
     \Proba_{post} (\bi^* = \bi) & = \frac{1}{Z'} \prod_{X \subseteq \llb 1, p \rrb \setminus \emptyset} \left(\frac{\lambda s^{|X|} (1- s)^{p - |X|}}{n} \right)^{e_X(\bi)} \left( 1 - \frac{\lambda (1 - (1 - s)^{p})}{n} \right)^{-e_{X}(\bi)} \\
    & = \frac{1}{Z} \exp \left( - \log n\sum_{X \neq \emptyset} e_X(\bi) (1 - \frac{1}{\log n} \log \left( \frac{1 - \frac{\lambda (1 - (1 - s)^{p})}{n} }{\lambda s^{|X|} (1- s)^{p - |X|}}\right)) \right)\\
    & = \frac{1}{Z} \exp \left( - \mathcal{H}(\bi) \log n (1 + o(1))) \right)
\end{split}
\end{equation}

which is what we wanted to show.
\qed
\end{proof}

\subsection{Symmetries of $\mc{H}$}

{ Recall that our aim is to apply Lemma \ref{invariance} in order to show Theorem \ref{erthm}. To this end, we fix a ball $B = B(\bi_0, r) \subseteq \mc{S}_n^{p-1}$: we wish to construct a subset $\text{Neg} \subseteq \mc{S}_n^{p-1}$ and a function $F: B \setminus \text{Neg} \to \mc{P}(\mc{S}_{n}^{p-1})$ such that

\begin{enumerate}
    \item for some $c > 0$, for any $\bi \in B \setminus \text{Neg}$, $\Proba_{post}(\bi) \leq \frac{1}{(cn)!}\Proba_{post}(F(\bi))$;
    \item if $\bi' \in \mc{S}_n^{p-1}$, the set $F^{-1}(\bi') \defeq \{ \bi \in B \setminus \text{Neg}: \bi' \in F(\bi) \}$ has size at most $\exp O_\Proba(n)$;
    \item $\Proba_{post}(\text{Neg}) = o_\Proba(1)$.
\end{enumerate}
This will guarantee that the prerequisites of Lemma \ref{invariance} are verified, and thus prove Theorem \ref{erthm}.

In this subsection, we will describe the symmetries which we intend to make use of, in order to construct our function $F$. In the following subsection, we will define our set $\text{Neg}$ verifying (iii), in such a way that, if $\bi \notin \text{Neg}$, there are many of these symmetries which we may exploit in order to build $F$. Then, in our final subsection, we will explicitly construct $F$, and prove that conditions (i) and (ii) are verified - showing the theorem.
}

Thus, let us begin by describing the symmetries which we are interested in.

\begin{proposition} \label{hamiltonianinvariant}
    Let $\bi \in \mathcal{S}_n^{p-1}$; set 
    $\mc{U}^{(1)} = \mc{U}^{(1)}(\bi) = \ds \bigcup_{i=2}^p \pi_i(\Gamma^{(i)})$.

Then, if $\sigma \in \mathcal{S}_n$ is an automorphism of $\Gamma^{(1)} \cap \mc{U}^{(1)}$, setting $\bigma = (\Id, \sigma, \ldots, \sigma) \in \{ \Id \} \times \mathcal{S}_n^{p-1}$,

    \begin{equation}
        \mathcal{H}(\bigma \bi) \defeq \mathcal{H}(( \pi_1,  \sigma \circ \pi_2, \ldots, \sigma \circ \pi_p)) \leq \mathcal{H}(\bi).
    \end{equation}
\end{proposition}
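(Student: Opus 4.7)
The plan is to reduce the Hamiltonian inequality to a combinatorial statement on edge counts, and then deduce it from the automorphism hypothesis via a short inclusion--exclusion argument.

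First, I would use the explicit form of $\mathcal{H}$ promised in Proposition \ref{hamiltonerdos}. The Erdős--Rényi likelihood factorises across edges as $P(\text{obs} \mid \bigga) = \prod_e p_{X^{\bigga}(e)}$, where $p_X = (\lambda/n) s^{|X|}(1-s)^{p-|X|}$ for $X \neq \emptyset$. Taking logs yields $\log P(\text{obs} \mid \bigga) = \sum_X e_X(\bigga) \log p_X$; since $\sum_X |X| e_X(\bigga) = \sum_i |G_i|$ is independent of $\bigga$ and $e_\emptyset(\bigga) = \binom{n}{2} - |U(\bigga)|$ with $U(\bigga) := G_1 \cup \bigcup_{i \geq 2} \sigma_i(G_i)$, a short rearrangement shows that the $\bigga$-dependent part of $-\log P$ is an affine, strictly increasing function of $|U(\bigga)|$ (the slope being of order $\log n$). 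Consequently, $\mathcal{H}(\bigga \bi) \leq \mathcal{H}(\bi)$ is \emph{equivalent} to $|U(\bigga \bi)| \leq |U(\bi)|$.

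Next, I would compute both unions explicitly. By definition $U(\bi) = G_1 \cup H_1$, while $U(\bigga \bi) = G_1 \cup \sigma \pi_2(G_2) \cup \cdots \cup \sigma \pi_p(G_p) = G_1 \cup \sigma(H_1)$. Since $|\sigma(H_1)| = |H_1|$, inclusion--exclusion reduces the required inequality to $|G_1 \cap H_1| \leq |G_1 \cap \sigma(H_1)|$. For this, the automorphism hypothesis $\sigma(G_1 \cap H_1) = G_1 \cap H_1$ gives $G_1 \cap H_1 = \sigma(G_1 \cap H_1) \subseteq \sigma(H_1)$; combined with the trivial inclusion $G_1 \cap H_1 \subseteq G_1$, this yields $G_1 \cap H_1 \subseteq G_1 \cap \sigma(H_1)$, and taking cardinalities concludes the proof.

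The only subtle point is verifying that the first reduction is genuinely exact rather than asymptotic: Proposition \ref{hamiltonerdos} merely records the equivalence $\mathcal{H}(\bigga) \underset{\Proba}{\sim} \sum_{X \neq \emptyset} e_X(\bigga)$, so one has to consult the explicit Hamiltonian in the appendix to confirm the strict affine relationship between $\mathcal{H}(\bigga)$ and $|U(\bigga)|$. Once this is in hand, the combinatorial content of the proposition collapses to the one-line inclusion--exclusion argument above.
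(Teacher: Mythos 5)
Your proof is correct and takes essentially the same approach as the paper: your inclusion--exclusion reduction to $|G_1 \cap H_1| \le |G_1 \cap \sigma(H_1)|$ is precisely what the paper establishes via its three identities on the $e_X$ (since $\sum_{X\neq\emptyset} e_X = |U|$, $\sum_{X\neq\emptyset,\{1\}} e_X = |H_1|$, $\sum_{1\in X} e_X = |G_1|$, and $\sum_{1\in X, X\neq\{1\}} e_X = |G_1 \cap H_1|$), and the key set inclusion $G_1 \cap H_1 = \sigma(G_1\cap H_1) \subseteq G_1 \cap \sigma(H_1)$ is identical in both. You are also right that one must pass from the informal $\mathcal{H} \underset{\Proba}{\sim} \sum_{X\neq\emptyset} e_X$ to the exact affine-in-$|U|$ form from the appendix; the paper leaves this implicit, whereas you spell it out, which is a small but genuine improvement in rigor.
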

{
The reason why such symmetries interest us is that $\mathcal{H}(\bigma \bi) \leq \mathcal{H}( \bi)$ corresponds (roughly) to having $\Proba_{post}(\bigma \bi) \geq \Proba_{post}(\bi)$; thus, if our function $F$ is of the form 

\begin{equation}
    \bi \mapsto \{ \bigma_1 \bi, \ldots, \bigma_N \bi \}
\end{equation}

for some large $N$, then condition (i) from above will automatically be verified. 
}
\begin{remark}
    There is of course nothing special about $\Gamma^{(1)}$: we could have defined $\mc{U}^{(i)}(\pi)$ similarly for any $2 \leq i \leq p$, and the same proposition would hold for any automorphisms of $\Gamma^{(i)} \cap \mc{U}^{(i)}$.
\end{remark}

\begin{proof}
    Let $\sigma$ be an automorphism of $\Gamma^{(1)} \cap \mc{U}^{(1)}(\bi)$, and denote $\bigma = (\Id, \sigma, \ldots, \sigma) \in \{ \Id\} \times \mathcal{S}_n^{p-1}$. We will note a few identities relating the $e_X(\bigma \bi)$ to the $e_X(\bi)$.

First of all,

\begin{equation}
    \sum_{X \neq \emptyset, \{ 1 \} } e_X(\bigma \bi) = e(\sigma(\mc{U}^{(1)})) = e(\mc{U}^{(1)}) = \sum_{X \neq \emptyset, \{ 1 \} } e_X(\bi)
\end{equation}
and
\begin{equation}
    \sum_{1 \in X \subseteq \llb 1, p \rrb} e_X(\bigma \bi) = e(\Gamma^{(1)}) = \sum_{1 \in X \subseteq \llb 1, p \rrb} e_X(\bi)
\end{equation}

(These identities are true for any $\sigma \in \mathcal{S}_n$.) Furthermore, since $\sigma$ is an automorphism of $\Gamma^{(1)} \cap \mc{U}^{(1)}$,

\begin{equation}
\begin{split}
       \sum_{\underset{X \neq \{ 1 \}}{1 \in X \subseteq \llb 1, p \rrb}}e_X(\bigma \bi) & = e(\Gamma^{(1)} \cap \sigma(\mc{U}^{(1)})) \\
       & \geq e((\Gamma^{(1)} \cap \mc{U}^{(1)}) \cap \sigma(\Gamma^{(1)} \cap \mc{U}^{(1)})) \\
       & = e(\Gamma^{(1)} \cap \mc{U}^{(1)}) \\
       & = \sum_{\underset{X \neq \{ 1 \}}{1 \in X \subseteq \llb 1, p \rrb}}e_X(\bi)
\end{split}
\end{equation}

Putting these three identities together, we obtain:

\begin{equation}
    \begin{split}
        \sum_{X \neq \emptyset} e_X(\bigma \bi) & = \left( \sum_{X \neq \emptyset, \{ 1\}} e_X(\bigma \bi) \right) + e_{\{ 1 \}}(\bigma \bi)\\
        & = \sum_{X \neq \emptyset, \{ 1\}} e_X(\bigma \bi) + \sum_{1 \in X \subseteq \llb 1, p \rrb} e_X(\bigma \bi) - \sum_{\underset{X \neq \{ 1 \}}{1 \in X \subseteq \llb 1, p \rrb}} e_X(\bigma \bi) \\
        & \leq \sum_{X \neq \emptyset, \{ 1\}} e_X(\bi) + \sum_{1 \in X \subseteq \llb 1, p \rrb} e_X(\bi) - \sum_{\underset{X \neq \{ 1 \}}{1 \in X \subseteq \llb 1, p \rrb}} e_X(\bi) \\
        &  = \sum_{X \neq \emptyset} e_X(\bi) 
    \end{split}
\end{equation}
concluding the proof. \qed
\end{proof}

\subsection{Building many automorphisms}

We are now tasked with finding such automorphisms $(\bigma_k)_k$ of $A(\bi) \defeq \Gamma^{(1)} \cap \mc{U}^{(1)}(\bi)$. For the true alignment $\bi^*$, given that the intersection graph $\Gamma^{(1)} \cap \mc{U}^{(1)}(\bi^*)$ is an Erdős–Rényi graph, this is possible when that graph is subcritical. Specifically:

\begin{proposition} \label{whatisneg}
    Let $G$ be a graph over $\llb 1, n \rrb$, and let $c > 0, \delta > 0$. We will say that $G$ verifies the property $(P_{c, \delta})$ if, for any $X \subseteq [n]$ of size at least $\delta n$ with no outgoing edges, there are at least $(c n)!$ automorphisms of $G$ which leave $X^c = (\llb 1,n \rrb \setminus X)$ fixed, and which have at most $\delta^2 n$ fixed points within $X$.

Now, assume that $A$ is an Erdős–Rényi graph with size $n$ and parameter $\rho < 1$, and let $\delta > 0$. Then, there exists $c = c(\rho, \delta)$ such that, with high probability, $A$ verifies the property $(P_{c, \delta})$.
\end{proposition}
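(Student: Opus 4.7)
The plan is to exploit the fact that when $A \sim \mathcal{G}(n, \rho/n)$ with $\rho < 1$, almost all vertices of $A$ lie in small connected components. Since any admissible $X$ (no outgoing edges, size $\geq \delta n$) must be a union of connected components of $A$, we can independently derange the isomorphic components appearing inside $X$ to produce the desired automorphisms.

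The first step is the standard subcritical concentration estimate: for every $\eta > 0$ there exists a constant $K = K(\rho, \eta)$ such that, with high probability, the total number of vertices of $A$ lying in components of size $> K$ is at most $\eta n$. I would apply this with $\eta = \delta^2/2$ and call the resulting high-probability event $\mathcal{E}$. Fixing any admissible $X$ and working on $\mathcal{E}$, I partition $X = X_s \sqcup X_\ell$ according to whether the containing component has size $\leq K$ or $> K$; then $|X_\ell| \leq \delta^2 n/2$ and $|X_s| \geq \delta n/2$. The set of isomorphism types of graphs of size $\leq K$ is finite, of size at most some constant $M = M(K)$; writing $N_\tau$ for the number of components of type $\tau$ appearing inside $X_s$, the identity $\sum_\tau N_\tau k_\tau = |X_s| \geq \delta n/2$ combined with $k_\tau \leq K$ and pigeonhole yields a type $\tau^*$ with $N_{\tau^*} \geq \delta n/(2MK)$.

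Now I construct automorphisms as follows: for each type $\tau$ appearing at least twice in $X_s$, pick a derangement of its $N_\tau$ copies, realized through any fixed choice of component-to-component isomorphisms; act as the identity on components of singleton types, on $X_\ell$, and on $X^c$. This does define an automorphism of $A$, because $X$ has no outgoing edges. The number of distinct such automorphisms is at least the number of derangements of $N_{\tau^*}$ elements, which is $\geq N_{\tau^*}!/3 \geq (cn)!$ for some $c = c(\rho,\delta) > 0$ (say $c < \delta/(2MK)$ and $n$ large). Each such automorphism fixes at most $|X_\ell| + MK \leq \delta^2 n/2 + O(1)$ vertices within $X$, which is $\leq \delta^2 n$ for $n$ large.

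The main obstacle is formulating the subcritical concentration input in exactly the uniform form ``a single $K$ such that at most $\eta n$ vertices live in components of size $> K$ with high probability''; this follows from classical results on the subcritical component-size profile (exponential tail bounds on individual component sizes, combined with the $O(\log n)$ bound on the largest component), but must be phrased sharply enough to be useful here. A secondary, easy-to-handle point is the \textbf{uniformity over $X$}: since $\mathcal{E}$ is a single structural property of $A$ not depending on $X$, and the subsequent per-$X$ construction only uses $\mathcal{E}$ and $|X| \geq \delta n$, the same constant $c$ serves all admissible $X$ simultaneously.
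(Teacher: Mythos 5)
Your proposal is correct and follows essentially the same approach as the paper: exploit that in a subcritical Erdős--Rényi graph almost all vertices lie in components of bounded size, observe that any admissible $X$ is a union of components, and generate automorphisms by deranging isomorphic small components of $X$ while fixing everything else. The only (cosmetic) differences are that the paper restricts attention to tree-shaped components and invokes the Bollob\'as convergence $n_i/n \to c_i$ of the component-type profile, and permutes all types with multiplicity $\geq 2$, whereas you use a slightly weaker tail estimate on vertices in large components and pigeonhole onto a single abundant type $\tau^*$; both routes deliver the same $(cn)!$ count and the $\leq \delta^2 n$ fixed-point bound.
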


Essentially, a graph verifying the property $(P_{c, \delta})$ is one such that any reasonably large subgraph has a lot of symmetries: this is naturally the case for a subcritical Erdős–Rényi graph, which is locally a forest whose trees can be swapped around.

\begin{proof}
    Let $A$ be as in the proposition. Fix an ordering $(\mathcal{T}_1, \mathcal{T}_2, \ldots)$ of all isomorphism classes of trees. For any $i \geq 1$, we set $n_{i}$ to be the number of connected components of $A$ which are isomorphic to $\mathcal{T}_i$. Then, for any $i$, there exists a constant $c_i$ with 

\begin{equation}
    \frac{n_i}{n} \toPr c_{i}
\end{equation}

and $\ds\sum_{i \geq 1} |\mathcal{T}_i|c_i = 1 $. (This is shown in \cite{bollobas}.)

Thus, let $\delta > 0$. If $K > 0$ is large enough, with high probability,

\begin{equation}
    \sum_{\underset{|\mathcal{T}_i| \leq K}{i \geq 1}} |\mathcal{T}_i| \frac{n_i}{n} \geq 1 - \frac{\delta^2}{2}
\end{equation}

As a result, any $X \subseteq \llb 1, n \rrb$ with size at least $\delta n$ and with no outgoing edges, contains a subset $X' \subseteq X$ of size at least $(\delta - \frac{\delta^2}{2}) n$ which is a disjoint union of connected components of size at most $K$.

Call $m_i$ the number of connected components of $X'$ isomorphic to $\mathcal{T}_i$, for any $i$: then, we may assume that $m_i \neq 1$ for any $i \geq 1$, even if that means decreasing the size of $X'$ to be at least $(\delta - \delta^2) n$. Consequently, we may build automorphisms of $X'$ by permuting its connected components which are isomorphic. The number of automorphisms with no fixed point which we can build in this way is at least equal to 

\begin{equation}
    \prod_{\underset{|\mathcal{T}_i| \leq K}{i \geq 1}} \frac{m_i!}{3}
\end{equation}

since, for any $k \neq 1$, there are at least $\frac{k!}{3}$ elements of $\mathcal{S}_k$ with no fixed points. Since $K$ is independent from $n$ and $\sum_i m_i \geq \ds\frac{(\delta-\delta^2) n}{K}$, this provides us with at least

\begin{equation}
    \exp\left({\frac{(\delta-\delta^2)}{K} n \log n + O(n)} \right) \geq (cn)!
\end{equation}

automorphisms of $X'$, for large enough $n$ (and some $c > 0$ independent from $n$). 

Finally, we may extend these automorphisms to be $\Id$ on the complement of $X'$, proving the proposition. \qed
\end{proof}

The previous proposition concerns the structure of $A(\bi^*)$. However, the measure $\Proba_{post}$ puts a large weight on permutations which are "hard to distinguish" from $\bi^*$; such permutations $\bi$ will give us a graph $A(\bi)$ with a similar structure. This is captured in the following corollary.

\begin{corollary} \label{corneg}
    Let $\delta > 0$. If $\lambda s (1 - (1 - s)^{p-1}) < 1$, the graph $A(\bi^*) = \Gamma^{(1)} \cap \mc{U}^{(1)}(\bi^*)$ is subcritical: there exists $c > 0$ such that $A$ verifies the property $(P_{c, \delta})$ defined above with high probability. Otherwise put, the set
    \begin{equation}
        \Neg = \{ \bi \in \mathcal{S}_n^{p-1} | \Gamma^{(1)} \cap \mc{U}^{(1)}(\bi) \text{ does not verify }(P_{c, \delta}) \}
    \end{equation}
    verifies $\Proba(\pi^* \in \Neg) = o(1)$. Consequently, $\Proba_{post}(\pi^* \in \Neg) = o_{\Proba}(1)$.
\end{corollary}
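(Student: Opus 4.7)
The plan is to determine the law of $A(\bi^*)$, feed it into Proposition \ref{whatisneg}, and then convert the resulting prior statement into a posterior one via the tower property and Markov's inequality.

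First, I would identify the distribution of $A(\bi^*) = G_1 \cap H_1(\bi^*)$. By relabeling nodes we may assume without loss of generality that $\bi^* = \Id$, in which case $A(\bi^*) = G_1 \cap (G_2 \cup \ldots \cup G_p)$. Under the correlated Erdős--Rényi model, each edge $e$ belongs to the master graph $G_0$ independently with probability $\lambda/n$, and conditional on $\{e \in G_0\}$ the indicators $(\1_{e \in G_i})_{1 \leq i \leq p}$ are iid Bernoulli$(s)$. Hence
\begin{equation}
\Proba(e \in A(\bi^*)) = \frac{\lambda}{n} \cdot s \cdot \bigl(1 - (1-s)^{p-1}\bigr) = \frac{\rho}{n},
\end{equation}
with $\rho = \lambda s(1-(1-s)^{p-1}) < 1$. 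Crucially, for distinct edges these events depend on disjoint collections of Bernoulli variables, so they are mutually independent: $A(\bi^*)$ is genuinely distributed as an Erdős--Rényi graph $\mathcal{G}(n, \rho/n)$, and not merely a graph with the right edge marginals.

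Second, I would invoke Proposition \ref{whatisneg} for this subcritical Erdős--Rényi graph, producing $c = c(\rho, \delta) > 0$ such that $A(\bi^*)$ verifies $(P_{c, \delta})$ with high probability. Unpacking the definition of $\Neg$, this is precisely the statement $\Proba(\bi^* \in \Neg) = o(1)$.

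Finally, $\Neg$ is a measurable function of the observed data $Y$, and the posterior measure is $\Proba_{post}(S) = \Proba(\bi^* \in S \mid Y)$, so the tower property gives
\begin{equation}
\E[\Proba_{post}(\Neg)] = \E[\Proba(\bi^* \in \Neg \mid Y)] = \Proba(\bi^* \in \Neg) = o(1),
\end{equation}
and Markov's inequality then yields $\Proba_{post}(\Neg) = o_{\Proba}(1)$. The only content of the argument is the independence observation in the first step; once one recognizes $A(\bi^*)$ as a literal Erdős--Rényi graph the rest is plug-in. I do not anticipate any real obstacle here, since Proposition \ref{whatisneg} has already absorbed all of the combinatorial work.
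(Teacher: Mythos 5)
Your proposal is correct and is essentially the argument the paper intends; the corollary has no separate written proof in the appendix, being presented as an immediate consequence of Proposition \ref{whatisneg}. Your three steps — identifying $A(\bi^*)$ as a genuine $\mathcal{G}(n, \rho/n)$ graph with $\rho = \lambda s(1-(1-s)^{p-1})$ via edge-by-edge independence in the master-graph construction, invoking Proposition \ref{whatisneg} to get $(P_{c,\delta})$ with high probability, and then passing from the prior bound $\Proba(\bi^* \in \Neg) = o(1)$ to the posterior bound via the tower property and Markov's inequality — are exactly what is needed, and you were right to flag the edge-independence observation as the only non-trivial content.
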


\subsection{Applying the lemma}

We now have most of the ingredients necessary to apply Lemma \ref{invariance} and thus prove Theorem \ref{erthm}.

\begin{proof}[Proof of Theorem \ref{erthm}]
    Let $\bi_0 \in \mathcal{S}_n^{p-1}$ and $\eps > 0$; we will build $F: B(\bi_0, 1- \eps) \setminus \Neg \to \mc{P}(\mathcal{S}_n^{p-1})$ verifying the conditions of Lemma \ref{invariance}.

    Thus, let $\bi \in B(\bi_0, 1-\eps) \setminus \Neg$. We consider the set $X(\bi) = \{ u \in \llb 1, n \rrb, \forall i \in \llb 1, n \rrb,  \pi_i(u) = (\bi_0)_i(u) \}$; by construction, $|X| \geq \eps n$. We then consider the set $\overline{X}(\bi)$, which is the minimal subset of $\llb 1, n \rrb$ containing $X$ such that no edges of $A(\bi)$ link $\overline{X}$ and $\overline{X}^c$. Since $\bi \notin \Neg$, by Proposition \ref{whatisneg}, we may construct $(c n)!$ automorphisms $(\sigma_1, \ldots, \sigma_N)$ of the graph $A$ which leave $\overline{X}^c$ fixed, and which have at most $\eps^2 n$ fixed points within $\overline{X}$, for some $c > 0$ independent of $n$. The function $F$ is defined as mapping $\bi$ to the set $\{ \bigma_i \bi, 1 \leq i \leq N \}$ (where - as defined earlier - $\bigma_i = (\Id, \sigma_i, \ldots, \sigma_i)$).

    Does $F$ satisfy the conditions of Lemma \ref{invariance}? Condition (i) is verified by construction: for any $\bi \in B(\bi_0, 1-\eps)$,
    \begin{equation}
        \Proba_{post}(\bi) \leq \frac{e^{o(n \log n)}}{(cn)!} \Proba_{post}(F(\bi)) 
    \end{equation}
    since $\mathcal{H}(\bigma_i \bi) \leq \mathcal{H}(\bi)$ for any $1 \leq i \leq N$. Condition (ii), however, is less obvious. We will prove that it holds with $K_n(r) = \exp O_{\Proba}(n)$. 

    Let $\bi' \in \mathcal{S}_n^{p-1}$. We may partition $F^{-1}(\bi') = \{ \bi: \bi' \in F(\bi) \}$ as 
    \begin{equation}
    \begin{split}
        F^{-1}(\bi')&  = \bigsqcup_{\underset{S \text{ subgraph of } (\Gamma^{(1)})|_Z}{Y \subseteq Z \subseteq \llb 1, n \rrb}} \{ \bi \in F^{-1}(\bi'): X(\bi) = Y, \overline{X}(\bi) = Z, (\Gamma^{(1)}\cap \mc{U}^{(1)}(\bi))|_Z = S \} \\
        & \defeq  \bigsqcup_{\underset{S \text{ subgraph of } (\Gamma^{(1)})|_Z}{Y \subseteq Z \subseteq \llb 1, n \rrb}} W(Y, Z, S)
        \end{split}
    \end{equation}
    Then, there are only $\exp O_\Proba(n)$ possible values for $(Y, Z, S)$, as $|\Gamma^{(1)}| = O_{\Proba}(n)$. We therefore just need to show that $|W(Y, Z, S)| \leq \exp O_\Proba(n)$ for any triplet $(Y, Z, S)$. To this end, let $\bi_1, \bi_2 \in W(Y, Z, S)$: by construction, $\bi_2 \bi_1^{-1}$ is of the form $(\Id, \sigma, \ldots, \sigma)$ for some $\sigma \in \mathcal{S}_n$. Furthermore, given such a $\sigma$, we know that $\sigma|_Y = \Id$, and $\sigma|_Z$ must be an automorphism of $S$. This allows us to use the following lemma, whose proof is deferred to the appendix.
    \begin{lemma} \label{auttrees}
        Let $H$ be a graph over some set $V$, with degree distribution $(d_v)_{v \in V}$. Then, $H$ has at most $e^{\frac{|V|}{e}}\ds\prod_{v \in V}\left(d_v!\right)$ automorphisms which send each connected component of $H$ to itself.
    \end{lemma}
    Indeed, by construction of $\overline{X}(\bi_1) = Z$, any connected component of $S$ intersects $Y$, and so $\sigma$ must send it to itself. This gives us at most
        $e^{\frac{n}{e}}\prod_{i=1}^n (d_i!) $
    possible values for $\sigma$, showing that $(ii)$ is verified as 

    \begin{equation}
        \log \prod_{i=1}^n (d_i!) \leq \sum_{i=1}^n d_i \log d_i = O_{\Proba}(n \E[d_i \log d_i ]) = O_{\Proba}(n).
    \end{equation}
    given that $\E[d_i \log d_i ] = O(1)$. Applying Lemma \ref{invariance} thus proves Theorem \ref{erthm}. \qed
\end{proof}

\appendix

\section{Some results on quadratic forms of normal vectors}

In this section, we list a few results on the behaviour of quadratic forms of normal vectors, for use in the following sections.

We begin with a general lemma on cumulants of quadratic forms of normal vectors. 

\begin{lemma} \label{cumcalc}
    Let $M$ be a symmetric matrix: then, if $A \sim \mathcal{N}(0, \Sigma)$, and $Q = A^T M A$ is a quadratic form of $A$, the log-moment generating function of $Q$ exists for small enough $t$, and can be written as

\begin{equation}
    \log \E[e^{t Q}] = - \frac{1}{2} \log \det(\Id - 2 t \Sigma M) =  \sum_{k \geq 1} 2^{k-1}\Tr((\Sigma M)^k) \frac{t^k}{k}
\end{equation}
\end{lemma}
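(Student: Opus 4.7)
The plan is to compute the moment generating function $\E[e^{tQ}]$ by a direct Gaussian integral, and then to Taylor-expand the resulting log-determinant to obtain the cumulant series.

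First, I would assume momentarily that $\Sigma$ is positive definite (the general, possibly singular case follows by approximating $\Sigma$ by $\Sigma + \varepsilon \Id$ and letting $\varepsilon \to 0$, using continuity of both sides in $\Sigma$). Using the explicit density of $A$, one writes
\[
    \E[e^{tQ}] = \frac{1}{(2\pi)^{n/2}\det(\Sigma)^{1/2}} \int_{\R^n} \exp\Bigl(-\tfrac{1}{2}\, x^T(\Sigma^{-1} - 2tM)\,x\Bigr)\, dx.
\]
For $|t|$ small enough that $\Sigma^{-1} - 2tM$ remains positive definite (equivalently, for $|t|$ less than $1/(2\rho(\Sigma M))$, where $\rho$ denotes spectral radius; note that $\Sigma M$ is similar to the symmetric matrix $\Sigma^{1/2} M \Sigma^{1/2}$ and hence has real spectrum), the integral is a standard Gaussian one and evaluates to
\[
    \E[e^{tQ}] = \det(\Sigma)^{-1/2} \det(\Sigma^{-1} - 2tM)^{-1/2} = \det(\Id - 2t \Sigma M)^{-1/2}.
\]
Taking logarithms yields the first claimed identity.

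For the power-series form, I would apply the matrix identity $\log \det(\Id - N) = \Tr \log(\Id - N) = -\sum_{k \geq 1} \Tr(N^k)/k$, valid as soon as the spectral radius of $N$ is strictly less than $1$, with $N = 2t \Sigma M$. This immediately gives
\[
    -\tfrac{1}{2} \log \det(\Id - 2t\Sigma M) = \tfrac{1}{2} \sum_{k \geq 1} \frac{(2t)^k}{k} \Tr((\Sigma M)^k) = \sum_{k \geq 1} \frac{2^{k-1}}{k} \Tr((\Sigma M)^k)\, t^k,
\]
which is exactly the claimed cumulant expansion.

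There is no real obstacle here; the only point requiring care is the regime of validity, which is dictated by the positivity of $\Sigma^{-1} - 2tM$ and which fixes the radius of convergence of both the integral representation and the power series to at least $1/(2\rho(\Sigma M))$. The identity $\log\det = \Tr\log$ combined with the Mercator series for the matrix logarithm does the rest.
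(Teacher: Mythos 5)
Your proof is correct, and it reaches the same determinant formula as the paper, but by a different computational route. The paper first normalizes to $\Sigma = \Id$ by substituting $\Sigma^{-1/2}A$ for $A$, then orthogonally diagonalizes $M$, uses rotational invariance of the standard Gaussian, and multiplies the one-dimensional moment generating functions $\E[e^{t\lambda_i Y_i^2}] = (1-2t\lambda_i)^{-1/2}$ to assemble $\det(\Id - 2tM)^{-1/2}$; this makes the scalar nature of the computation explicit at the cost of an extra reduction step. You instead evaluate the multidimensional Gaussian integral directly with the covariance $\Sigma$ in place, obtaining $\det(\Id - 2t\Sigma M)^{-1/2}$ in one stroke, and you correctly flag the two points that make this safe: $\Sigma M$ has real spectrum (being similar to $\Sigma^{1/2}M\Sigma^{1/2}$), and the integral converges precisely when $\Sigma^{-1} - 2tM \succ 0$, i.e.\ $|t| < 1/(2\rho(\Sigma M))$. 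The perturbation argument $\Sigma \mapsto \Sigma + \eps\Id$ for singular $\Sigma$ is also a sound addition that the paper does not bother with. For the series expansion, both you and the paper ultimately invoke $\log\det = \Tr\log$ together with the Mercator series; the paper phrases it as $\Id - 2tM = e^L$ and you as the standard power-series identity, which is the same argument. Your version is marginally more streamlined; the paper's version is perhaps more self-contained for a reader who wants to see the chi-squared building blocks.
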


The following proof is a standard calculation, often used to prove the Hanson-Wright inequality (see \cite{hr} for instance).

\begin{proof}
    Replacing $A$ by $\Sigma^{-\frac{1}{2}} A$, it is sufficient for us to handle the case where $\Sigma = \Id$.

    We may diagonalise $M$ to obtain the expression $M = P^T D P$, where $P$ is an orthogonal matrix. Then, we may write

    \begin{equation}
        Q = (PA)^T D (PA) \defeq Y^T D Y
    \end{equation}
    However, since the standard Gaussian distribution is invariant by the action of the orthogonal group $O(n)$, $Y$ is also a (standard, centred) normal vector. As such, setting $Y = (Y_1, \ldots, Y_n)$ and denoting by $(\lambda_1, \ldots, \lambda_n)$ the eigenvalues of $M$,
\begin{equation}
\begin{split}
    \E[e^{t Q}] & = \prod_{i=1}^n \E[e^{t\lambda_i Y_i^2}] \\
                & = \prod_{i=1}^n \frac{1}{\sqrt{1 - 2 t \lambda_i}} \\
                & = (\det(\Id - 2 t M))^{-\frac{1}{2}}
\end{split} 
\end{equation}
for small enough $t$, showing the first identity.

For the second one, note that if $t$ is small enough, we may write $\Id - 2 t M = e^L$ for some matrix $L = - \dsum_{k \geq 1} 2^k M^k \frac{t^k}{k}$. Thus,

\begin{equation}
    \log \det (\Id - 2 t M)  = \Tr L  = -  \dsum_{k \geq 1} 2^k \Tr(M^k) \frac{t^k}{k}
\end{equation}
concluding the proof.\qed
\end{proof}

In the same vein as this, we may obtain tail bounds on such random variables. Again, the following result is fairly standard: a proof can be found in \cite{concentration} for instance.

\begin{lemma} \label{weakmass}
    Let $X$ be a centred normal vector with covariance matrix $\Sigma$, $S_N$ a symmetric 
    matrix with size $N \to + \infty$. Then,

    \begin{equation}
    \begin{split}
        \log \Proba(X^T S_N X  - \E[X^T S_N X] > C_N) & \leq -\frac{C_N^2}{4(\Tr((\Sigma S_N)^2) + C_N \|\Sigma^{\frac{1}{2}} S_N \Sigma^{\frac{1}{2}}\|_{op})} \\
        & = -\frac{C_N^2}{2 \Var(X^T S_N X) + C_N \|\Sigma^{\frac{1}{2}} S_N \Sigma^{\frac{1}{2}}\|_{op})}
        \end{split}
    \end{equation}
\end{lemma}

Finally, we will state a significantly sharper tail bound which we may apply if the quantity $C_N$ is not too large.

\begin{proposition}
\label{mass2}
        Let $X$ be a centred normal vector with covariance matrix $\Sigma$, $S_N$ a symmetric 
    matrix with size $N \to + \infty$, verifying {$\|\Sigma^{\frac{1}{2}}S_N \Sigma^{\frac{1}{2}} \|_{op} \leq K$ }independent of $N$. Then, if $C_N > 0$ with $1\ll C_N  \ll \Tr((\Sigma S_N)^2)$,

    \begin{equation}
         \Proba(X^T S_N X  - \E[X^T S_N X] > C_N) \underset{N \to + \infty}{\sim} \frac{\sqrt{\Tr((\Sigma S_N)^2)}}{\sqrt{\pi} C_N}\exp \left(- \frac{C_N^2}{4 \Tr((\Sigma S_N)^2)} + O(\frac{C_N^3}{\Tr((\Sigma S_N)^2)^2}) \right)
    \end{equation}

    and a symmetric result holds for $\Proba(X^T S_N X  - \E[X^T S_N X] < - C_N)$.
\end{proposition}

\begin{proof}

    Again, we may write

    \begin{equation}
        Q_N = X^T S_N X - \E[X^T S_N X] = \sum_{i=1}^N \lambda_i (Y_i^2-1)
    \end{equation}
    as in the previous proof.
    
    This time, we will apply the following theorem from \cite[Eq. (2.11)]{hrsharp}:

    \begin{theorem}
        Let $\xi_1, \ldots, \xi_N$ be independent centred random variables such that, for some $A > 0$, for any $i, k \geq 0$,

        \begin{equation}
            \E[|\xi_i|^k] \leq A^k k! 
        \end{equation}

        Then, setting $\Phi(x) = \frac{1}{\sqrt{2\pi}} \int_{-\infty}^x e^{-\frac{u^2}{2}} du$, and $s^2 = \sum_{i} \E[\xi_i^2]$, if $1 \ll x \ll s$,

        \begin{equation}
            \Proba\left( \dfrac{1}{s} \sum_{i=1}^N \xi_i > x \right) \underset{N \to + \infty}{\sim} [1 - \Phi(x)]e^{O(\frac{x^3}{s})} \underset{N \to + \infty}{\sim} \frac{1}{\sqrt{2\pi}x} e^{-\frac{x^2}{2} + O(\frac{x^3}{s})}
        \end{equation}
    \end{theorem}

    In this case, $s^2 = 2\ds\sum_{i=1}^n \lambda_i^2 = 2\Tr((\Sigma S_N)^2)$ and so $x^2 = \dfrac{C_N^2}{2 \Tr((\Sigma S_N)^2)}$; Proposition \ref{mass2} follows immediately.
    \qed
\end{proof}
\section{Proof of Proposition \ref{cumulants}} \label{quad2}

We now turn to proving Proposition \ref{cumulants}. Here, for any $\bigma \in \mathcal{S}_n^{p-1}$, we may write $V_{\mathrm{off}}(\bigma) = -G^T M_{\bigma} G$, as a quadratic form over $\R^E \otimes \R^{p}$, where

\begin{equation} \label{defmsigreal}
    (M_{\bigma})_{(e, i), (e', j)} = \1_{e'=\sigma_{ij}(e)} \1_{e \neq e'} \1_{i \neq j}
\end{equation}

for any $e, e' \in E, 1 \leq i , j \leq p$; and, as described in (\ref{covdef}), the covariance matrix of $G$ is 

\begin{equation} \label{covmatrix}
    \Sigma = (1 - \rho) I_N + \rho ( I_{\binom{n}{2}} \otimes J_p) \defeq (1 - \rho) I_N + \rho \tilde{J}
\end{equation}

where $J_p$ is the $p \times p$ all-ones matrix. For the sake of convenience, we will set $\mathcal{M}_{\bigma} = \Sigma^{\frac{1}{2}} M_{\bigma}\Sigma^{\frac{1}{2}}$ for the rest of this proof.

We will also note that, for symmetric positive $U$ and symmetric $V$, 

\begin{equation} \label{traces}
    \Tr UV \leq \|V\|_{op} \Tr U.
\end{equation}
This will allow us to bound the traces of our matrices in order to properly control our error terms. { Explicitly, since $\|M_{\bigma}\|_{op} \leq p-1$ and $\|\Sigma\|_{op} = 1 + o(1)$, we may use the fact that $ \mathcal{M}_{\bigma}^2$ is positive, so that for $k \geq 2$ and large enough $n$ 

\begin{equation} \label{crude}
\begin{split}
    \Tr(\mathcal{M}_{\bigma}^k) &\leq \Tr(\mathcal{M}_{\bigma}^2) \|\mathcal{M}_{\bigma}^{k-2}\|_{op} \\
    & \leq p^{k-2}\Tr(\mathcal{M}_{\bigma}^2)
    \end{split}
\end{equation}

This allows us, by applying Lemma \ref{cumcalc}, to obtain that:

\begin{equation}
    \begin{split}
        \log \E[e^{-\beta V_{\mathrm{off}}(\bigma)}] & = \sum_{k \geq 1} 2^{k-1} \Tr((\Sigma M_{\bigma})^k) \frac{\beta^k}{k} \\
        & = \beta \Tr(\mathcal{M}_{\bigma}) + \beta^2 \Tr (\mathcal{M}_{\bigma}^2)(1 + o(1)).
    \end{split}
\end{equation}
}
We now simply need to evaluate some traces.

To begin with, by definition, $\Sigma$ and $M_{\bigma}$ can be written as block matrices with ($|E|$ blocks $ \times $ $ |E|$ blocks), where each block is of size $p \times p$. The matrix $\Sigma$ is block diagonal, while the diagonal blocks of $M_{\bigma}$ are all zero matrices. Consequently, $\Tr(\Sigma M_{\bigma}) = 0$.

We now move on to $\Tr(\mathcal{M}_{\bigma}^2)$. Given that $\|\Sigma - \Id\|_{op} = O(\rho)$,
{
\begin{equation}
\begin{split}
     \Tr(\mathcal{M}_{\bigma}^2) & = \Tr(\Sigma M_{\bigma}\Sigma M_{\bigma}) \\
     & = \Tr(M_{\bigma}^2) + 2\Tr((\Sigma - \Id) M_{\bigma}^2) + \Tr((\Sigma - \Id) M_{\bigma}(\Sigma - \Id) M_{\bigma}) \\
\end{split}
\end{equation}

so that

\begin{equation}
    \begin{split}
        \Tr(\mathcal{M}_{\bigma}^2) - \Tr(M_{\bigma}^2) & \leq 2 \|\Sigma - \Id\|_{op} \Tr(M_{\bigma}^2) +\|\Sigma - \Id\|_{op} \Tr((\Sigma - \Id)M_{\bigma}^2) \\
        & = \Tr(M_{\bigma}^2)(1 + O(\rho)).
    \end{split}
\end{equation}

by (\ref{traces}).}

We may now compute:
\begin{equation}
\begin{split}
    \Tr(M_{\bigma}^2) & = \sum_{e, e' \in E} \sum_{1 \leq i, j \leq p} (M_{\bigma})_{(e, i), (e', j)}^2 \\
    & = \# \{ e \neq e' \in E, 1 \leq i \neq j \leq p, e' = \sigma_{ij}(e)  \} \\
    & = \sum_{ 1 \leq i \neq j \leq p} \left(\binom{n}{2} - \# \{ e \in E, \sigma_{ij}(e) = e \}\right)\\
    & \defeq \sum_{1 \leq i \neq j \leq p} \left(\binom{n}{2} - D_{ij}(\bigma) \right)
    \end{split}
\end{equation}

Thus:

\begin{equation}
    \log \E[e^{-\beta V_{\mathrm{off}}(\bigma)}]  = \beta^2 \sum_{1 \leq i \neq j \leq p} \left(\binom{n}{2} - D_{ij}(\bigma) \right) + O(\beta^3 n^2)
\end{equation}

Note that $\beta^3 n^2 = o(n)$; thus, to conclude the proof, we just need to show the following lemma.

\begin{lemma} \label{Dij}
    For $\bigma \in \mathcal{S}_n^{p-1}$,
    \begin{equation}
    D_{ij}(\bigma) = \binom{d_{ij}(\bigma)}{2} + R_{\bigma}
\end{equation}
where $0 \leq R_{\bigma} \leq \frac{n - d_{ij}(\bigma)}{2}$.
\end{lemma}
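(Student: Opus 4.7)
The plan is to unpack the definition of $D_{ij}(\bigma)$ by performing a case analysis on the orbit structure of the permutation $\sigma_{ij}$ acting on $\llb 1, n \rrb$. Recall that $\sigma_{ij}$ is a permutation on vertices which acts on edges $e = \{u, v\} \in E$ in the natural way; an edge $e = \{u, v\}$ is fixed by this action precisely when $\{\sigma_{ij}(u), \sigma_{ij}(v)\} = \{u, v\}$. There are exactly two ways this can happen: either (a) both endpoints are fixed points of $\sigma_{ij}$, i.e.\ $\sigma_{ij}(u) = u$ and $\sigma_{ij}(v) = v$, or (b) $\sigma_{ij}$ swaps the two endpoints, i.e.\ $\sigma_{ij}(u) = v$ and $\sigma_{ij}(v) = u$ with $u \neq v$.

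The first case contributes exactly $\binom{d_{ij}(\bigma)}{2}$ to $D_{ij}(\bigma)$, since it consists of choosing an unordered pair of distinct fixed points of $\sigma_{ij}$, of which there are $d_{ij}(\bigma)$ by definition. I define $R_\bigma$ to be the number of edges in case (b), which is the same as the number of $2$-cycles (transpositions) appearing in the cycle decomposition of $\sigma_{ij}$. This immediately gives the decomposition $D_{ij}(\bigma) = \binom{d_{ij}(\bigma)}{2} + R_\bigma$ and the lower bound $R_\bigma \geq 0$.

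For the upper bound, I would note that each of the $R_\bigma$ transpositions involves two vertices that are not fixed points of $\sigma_{ij}$, and these pairs of vertices are pairwise disjoint (since cycles in a cycle decomposition are disjoint). Hence the $2$-cycles together involve $2 R_\bigma$ distinct non-fixed vertices, which yields $2 R_\bigma \leq n - d_{ij}(\bigma)$, i.e.\ $R_\bigma \leq \frac{n - d_{ij}(\bigma)}{2}$, as desired.

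There is no real obstacle here: the proof is a direct consequence of the disjointness of cycles in the cycle decomposition of a permutation. The only thing worth being careful about is the bookkeeping between vertex-level and edge-level fixed points, and in particular remembering that in case (b) the edge is fixed even though neither endpoint is, so that the pairs of endpoints involved are disjoint across different transpositions.
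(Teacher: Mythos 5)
Your proof is correct and takes essentially the same approach as the paper: split the edge-fixing condition into the two cases (both endpoints fixed, giving $\binom{d_{ij}(\bigma)}{2}$, versus endpoints swapped, counted by the number of $2$-cycles of $\sigma_{ij}$), and bound the latter by $\frac{n - d_{ij}(\bigma)}{2}$ via disjointness of cycles.
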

\begin{proof}
    If $e \in E$, writing $e = \{u, v \}$, we may see that $\sigma_{ij}(e) = e$ if and only if

    \begin{equation}
    \begin{cases}
            \sigma_{ij}(u) = u \\
            \sigma_{ij}(v) = v
        \end{cases} \text{ or       }
    \begin{cases}
        \sigma_{ij}(u) = v \\
        \sigma_{ij}(v) = u
    \end{cases}.
\end{equation}

There are $\ds \binom{d_{ij}(\bigma)}{2}$ pairs verifying the first condition. The pairs verifying the second condition are the orbits of length $2$ for the action of $\sigma_{ij}$ over $\llb 1, n \rrb$: there are at most $\dfrac{n - d_{ij}}{2}$ of these.  \qed
\end{proof}

We have thus proven Proposition \ref{cumulants}. \qed 

Before moving on, we will perform a final computation which we will need later on. Namely, if $\bigma, \bigma' \in \mathcal{S}_n^{p-1}$:

\begin{equation}
\begin{split}
    \Tr(M_{\bigma} M_{\bigma'}) & = \sum_{e, e' \in E} \sum_{1 \leq i, j \leq p} (M_{\bigma})_{(e, i), (e', j)} (M_{\bigma'})_{(e, i), (e', j)} \\
    & = \# \{ e \neq e' \in E, 1 \leq i \neq j \leq p, e' = \sigma_{ij}(e) = \sigma'_{ij}(e) \} \\
    & = \sum_{ 1 \leq i \neq j \leq p}\# \{ e \in E, \sigma_{ij}(e) = \sigma'_{ij}(e) \neq e \}\\
    & \leq \sum_{ 1 \leq i \neq j \leq p}\# \{ e \in E, \sigma_{ij}(e) = \sigma'_{ij}(e) \}\\
    & \defeq \sum_{1 \leq i \neq j \leq p} C_{ij}(\bigma, \bigma')
    \end{split}
\end{equation}

As for $(D_{ij}(\bigma))$, we may show that, setting $c_{ij}(\bigma, \bigma') = \{u \in \llb 1, n \rrb, \sigma_{ij}(u) = \sigma'_{ij}(u) \}$,

\begin{equation}
    C_{ij}(\bigma, \bigma') = \binom{c_{ij}(\bigma, \bigma')}{2} + O(n)
\end{equation}

and so

\begin{equation} \label{C17}
    \begin{split}
        \Tr((\mathcal{M}_{\bigma} + \mathcal{M}_{\bigma'})^2) & = \Tr((M_{\bigma} + M_{\bigma'})^2) + O(\rho n^2) \\
        & = \Tr(M_{\bigma}^2) + \Tr(M_{\bigma'}^2) + 2 \Tr(M_{\bigma} M_{\bigma'}) \\
        & = \sum_{1 \leq i \neq j \leq p}\left( \binom{n}{2} - \binom{d_{ij}(\bigma)}{2} - \binom{d_{ij}(\bigma')}{2} + 2 \binom{c_{ij}(\bigma, \bigma')}{2} \right) + O(\rho n^2).
    \end{split}
\end{equation}
{
\section{Proof of Proposition \ref{weaklemma}} \label{quad}

Recall that the value of $V_{\mathrm{diag}}(\bigma)$ is equal to 
\begin{equation}
\begin{split}
    V_{\mathrm{diag}}(\bigma) & = -\sum_{\underset{\sigma_{ij}(e) \neq e}{e \in E, 1 \leq i \neq j \leq p}}G_e^{(i)} G_e^{(j)} \\
    & = \sum_{1 \leq i \neq j \leq p} \left[-\sum_{e \in E} G_e^{(i)} G_e^{(j)} +  \sum_{\underset{u, v \in \Fix \sigma_{ij}}{e = \{u, v \} \in E}}G_e^{(i)}  G_e^{(j)} + \sum_{\underset{\sigma_{ij}(u) = v, \sigma_{ij}(v) = u}{e = \{u, v \} \in E}}G_e^{(i)} G_e^{(j)}\right] \\
    & \defeq \sum_{1 \leq i \neq j \leq p} \left[ V_{ij}^{(1)} - V_{ij}^{(2)}(\Fix \sigma_{ij}) - V_{ij}^{(3)}(\bigma) \right]
    \end{split}
\end{equation}

Since this is a quadratic form of the vector $(G_{e}^{(i)})$, we may apply Lemma \ref{weakmass} to bound the probability that any of these three terms ever becomes too large. First,

\begin{equation}
    \begin{split}
        \Proba & (\exists 1 \leq i \neq j \leq p, \exists Y \subseteq \llb 1, n \rrb, | V_{ij}^{(2)}(Y) - \E[ V_{ij}^{(2)}(Y)]| > (\log n)^{\frac{1}{4}}n^{\frac{3}{2}}) \\
        & \leq \sum_{1 \leq i \neq j \leq p} \sum_{ Y \subseteq \llb 1, n \rrb} \Proba(| V_{ij}^{(2)}(Y) - \E[ V_{ij}^{(2)}(Y)]| > (\log n)^{\frac{1}{4}}n^{\frac{3}{2}}) \\
        & \leq \sum_{1 \leq i \neq j \leq p} \sum_{ Y \subseteq \llb 1, n \rrb} \exp\left(- \frac{n^3 (\log n)^{\frac{1}{2}}}{O(n^2) + O((\log n)^{\frac{1}{4}}n^{\frac{3}{2}})}\right) \\
        & = \exp(- c n \sqrt{\log n} + o(n \sqrt{\log n})) = o(1)
    \end{split}
\end{equation}
for some $c > 0$, since $\Var V_{ij}^{(2)}(Y) = O(n^2) $ (uniformly in $Y$) and the operator-norm term is $O(1)$ (again uniformly in $Y$).

Noting that $V_{ij}^{(1)} = V_{ij}^{(2)}(\llb 1, n \rrb)$, this also shows that 

\begin{equation}
\begin{split}
    \Proba & (\exists 1 \leq i \neq j \leq p, | V_{ij}^{(1)} - \E[ V_{ij}^{(1)}]| > (\log n)^{\frac{1}{4}}n^{\frac{3}{2}}) \\
    & \leq \exp(- c n \sqrt{\log n} + o(n \sqrt{\log n})) = o(1).
    \end{split}
\end{equation}

Finally, 

\begin{equation}
    \begin{split}
        \Proba & (\exists 1 \leq i \neq j \leq p, \exists \bigma \in \mc{S}_n^{p-1},  | V_{ij}^{(3)}(\bigma) - \E[ V_{ij}^{(3)}(\bigma)]| > (\log n)^{\frac{1}{4}}n^{\frac{3}{2}}) \\
        & \leq \sum_{1 \leq i \neq j \leq p} \sum_{ \bigma \in \mc{S}_n^{p-1}} \Proba(| V_{ij}^{(3)}(\bigma) - \E[ V_{ij}^{(3)}(\bigma)]| > (\log n)^{\frac{1}{4}}n^{\frac{3}{2}}) \\
        & \leq \sum_{1 \leq i \neq j \leq p} \sum_{ \bigma \in \mc{S}_n^{p-1}}\exp\left(- \frac{n^3 (\log n)^{\frac{1}{2}}}{O(n) + O((\log n)^{\frac{1}{4}}n^{\frac{3}{2}})}\right) \\
        & = \exp(- c n^{\frac{3}{2}}(\log n)^{\frac{1}{4}} + o(n^{\frac{3}{2}}(\log n)^{\frac{1}{4}})) = o(1).
    \end{split}
\end{equation}

as, this time, $\Var V_{ij}^{(3)}(\bigma) = O(n)$ (uniformly in $\bigma$). This proves the first part of the proposition.

The second part follows, simply by noting that, if $D_{ij}(\bigma) = \# \{ e \in E, \sigma_{ij}(e) = e \}$ for $1 \leq i \neq j \leq p$,

\begin{equation}
\begin{split}
    \E[V_{\mathrm{diag}}(\bigma)] & = -  \sum_{1 \leq i \neq j \leq p} \sum_{\underset{\sigma_{ij}(e) \neq e}{e \in E}} \E[G_e^{(i)} G_e^{(j)}] \\
    & = - \rho \sum_{1 \leq i \neq j \leq p} \left(\binom{n}{2} - D_{ij}(\bigma)\right) \\
    & = - \rho \sum_{1 \leq i \neq j \leq p} \left( \binom{n}{2} -\binom{d_{ij}(\bigma)}{2}\right) + O(\rho n)
    \end{split}
\end{equation}

by Lemma \ref{Dij}.
}
{
\section{Proof of Proposition \ref{energy}}
If $r\geq 0$,

\begin{equation}
\begin{split}
    \E[Z_r(\bi^*) \1_{\mathcal{C}}] & = \sum_{\bigma \in B(\boldsymbol{\pi}^*,r) } \E[e^{-\beta V(\bigma)}\1_{\mathcal{C}}] \\
    & \leq \sum_{\bigma \in B(\boldsymbol{\pi}^*,r)} \E[e^{- \beta(V_{\mathrm{off}}(\bigma) + \E[V_{\mathrm{diag}}(\bigma)] + O((\log n)^{\frac{1}{4}} n^{\frac{3}{2}}))}] \\
    & = \sum_{\bigma \in B(\boldsymbol{\pi}^*,r) } \exp \left( - \frac{\rho^2}{4} \sum_{1 \leq i \neq j \leq p} \left(  \binom{n}{2} - \binom{d_{ij}(\bigma)}{2} \right) + O(n (\log n)^\frac{3}{4}) \right) \\
    & = e^{O(n (\log n)^\frac{3}{4})}\sum_{\bigma \in B(\boldsymbol{\pi}^*,r) } \exp \left( - \frac{\rho^2}{4} \sum_{1 \leq i \neq j \leq p} \left(  \frac{n^2 - d_{ij}(\bigma)^2}{2} \right) \right).
    \end{split}
\end{equation}

By definition of $B(\bi^*, r)$, we have $d_{ij}(\bigma) \geq (1-r)n$ for any $\bigma \in B(\bi^*, r)$ and $1 \leq i \neq j \leq p$. Thus,
\begin{equation} \label{1side1}
\begin{split} 
    \E&[Z_r(\bi^*) \1_{\mathcal{C}}] \\
    &\leq \sum_{(1-r)n \leq (d_{ij}) \leq n} F(d_{ij})\exp \left( - \frac{\rho^2}{4} \sum_{1 \leq i \neq j \leq p} \left(  \frac{n^2 - d_{ij}(\bigma)^2}{2} \right) + O(n (\log n)^\frac{3}{4}) \right) \\
    & \leq \max_{(\alpha_{ij}) \in [(1-r), 1]^{\binom{p}{2}}} F(n \alpha_{ij})\exp \left(- \frac{1}{p}n \log n\sum_{1 \leq i \neq j \leq p} (1 + \eta) (1 - \alpha_{ij}^2) + O(n (\log n)^\frac{3}{4}) \right) \\
    \end{split}
\end{equation}

where $F$ was defined in Theorem \ref{permcount}, and we have substituted $\alpha_{ij} = \frac{d_{ij}}{n}$. Therefore, applying Corollary \ref{usable}, 

\begin{equation}
\begin{split}
    \log  \E &[Z_r(\bi^*) \1_{\mathcal{C}}] \\
    &\leq n \log n   \max_{(\alpha_{ij}) \in [(1-r), 1]^{\binom{p}{2}}} \left( 
    \frac{1}{p}\sum_{1 \leq i \neq j \leq p} (1 - \alpha_{ij} - (1 + \eta) (1 - \alpha_{ij}^2)) \right)+ O(n (\log n)^\frac{3}{4}) \\
     & \leq (p-1) n \log n  \max_{(\alpha_{ij}) \in [(1-r), 1]^{\binom{p}{2}}} \frac{1}{p(p-1)} \sum_{1 \leq i \neq j \leq p} \phe(1 - \alpha_{ij})  + O(n (\log n)^\frac{3}{4}) \\
     & = (p-1) n \log n \max_{r' \in [0, r]} \phe(r') + O(n (\log n)^\frac{3}{4}).
     \end{split}
\end{equation}

proving the first half of the proposition. 

In the same way, 

\begin{equation}
    \E[(Z - Z_r(\bi^*))\1_{\mc{C}}] = e^{O(n (\log n)^\frac{3}{4})}\sum_{\bigma \notin B(\boldsymbol{\pi}^*,r) } \exp \left( - \frac{\rho^2}{4} \sum_{1 \leq i \neq j \leq p} \left(  \frac{n^2 - d_{ij}(\bigma)^2}{2} \right) \right).
\end{equation}

This time, we will use the fact that if $\bigma \notin B(\bi^*, r)$, then for some $j > 1$, $\frac{d_{1j}(\bigma)}{n} \leq 1 - \frac{r}{p-1}$. As a result, setting $\mc{A} = \{(\alpha_{ij})_{1 \leq i < j \leq p} \in [0, 1]^{\binom{p}{2}} | \exists 2 \leq j \leq p, \alpha_{1j} \leq 1-\frac{r}{p-1}  \}$,

\begin{equation}
    \E[(Z - Z_r(\bi^*))\1_{\mc{C}}] \leq \max_{(\alpha_{ij}) \in \mc{A}} F(n \alpha_{ij})\exp \left(- \frac{1}{p}n \log n\sum_{1 \leq i \neq j \leq p} (1 + \eta) (1 - \alpha_{ij}^2) + O(n (\log n)^\frac{3}{4}) \right)
\end{equation}

as in (\ref{1side1}). Again upper bounding $F$ via Corollary \ref{usable}, we thus obtain

\begin{equation}
    \begin{split}
        \log \E[(Z - Z_r(\bi^*))\1_{\mc{C}}] &  \leq (p-1) n \log n  \max_{(\alpha_{ij}) \in \mc{A}} \frac{1}{p(p-1)} \sum_{1 \leq i \neq j \leq p} \phe(1 - \alpha_{ij})  + O(n (\log n)^\frac{3}{4}) \\
        & = (p-1) n \log n \left( \frac{p-1}{p} \max_{r' \in [0,1]} \phe(r') + \frac{1}{p} \max_{r' \in [\frac{r}{p-1}, 1] } \phe(r') \right) + O(n (\log n)^\frac{3}{4})
    \end{split}
\end{equation}
concluding the proof.
 \qed
}
\section{Proof of Lemma \ref{concentration}}
We will rely upon the following concentration theorem:

\begin{theorem}
    Let $(X_1, \ldots, X_N)$ be a standard centred normal vector, and let $f: \R^N \to \R$ be an $L$-Lipschitz function. Then, if $t > 0$,

    \begin{equation}
        \Proba(|f(X) - \E[f(X)]| \geq t) \leq 2e^{-\frac{t^2}{2 L^2}}
    \end{equation}
\end{theorem}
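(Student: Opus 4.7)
The plan is to invoke the standard Gaussian concentration inequality, which follows from the Gaussian log-Sobolev inequality of Gross via the Herbst entropy method. I would first reduce to the case where $f$ is smooth, by replacing $f$ with a Gaussian mollification $f_\eps = f \ast \phi_\eps$: this is $C^\infty$, still $L$-Lipschitz, and $f_\eps \to f$ pointwise, so any tail bound on $f_\eps$ passes to $f$ in the limit $\eps \to 0$.

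The central input is the Gaussian log-Sobolev inequality: for any sufficiently regular $g : \R^N \to \R$,
\begin{equation}
    \E[g(X)^2 \log g(X)^2] - \E[g(X)^2] \log \E[g(X)^2] \leq 2 \E[\|\nabla g(X)\|^2].
\end{equation}
I would cite this as a black box. Applying it to $g = e^{\lambda f / 2}$ for $\lambda > 0$, and using $\|\nabla f\| \leq L$ almost everywhere, produces
\begin{equation}
    \lambda H'(\lambda) - H(\lambda) \log H(\lambda) \leq \frac{\lambda^2 L^2}{2} H(\lambda), \quad \text{where } H(\lambda) \defeq \E[e^{\lambda f(X)}].
\end{equation}
Dividing through by $\lambda^2 H(\lambda)$ rewrites this as the differential inequality
\begin{equation}
    \frac{d}{d\lambda}\left[\frac{\log H(\lambda)}{\lambda}\right] \leq \frac{L^2}{2}.
\end{equation}

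Integrating from $0$ to $\lambda$, with $\ds \lim_{\lambda \to 0^+} \frac{\log H(\lambda)}{\lambda} = \E[f(X)]$ by L'Hôpital's rule, gives the moment generating function bound $\E[e^{\lambda(f(X) - \E[f(X)])}] \leq e^{\lambda^2 L^2 / 2}$ for every $\lambda > 0$. A Chernoff argument, $\Proba(f(X) - \E[f(X)] \geq t) \leq \inf_{\lambda > 0} e^{-\lambda t + \lambda^2 L^2 / 2}$, optimised at $\lambda = t / L^2$, then yields the claimed bound $e^{- t^2 / (2 L^2)}$.

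The only real obstacle is the log-Sobolev inequality itself; proving it from scratch would require a tensorisation argument reducing to dimension one together with a Hermite-polynomial computation, but since this is a classical result it is fine to quote. The Herbst argument that converts log-Sobolev into a subgaussian MGF bound, and then into a deviation inequality, is then completely mechanical.
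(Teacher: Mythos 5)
Your proof is correct: the reduction to smooth $f$ by mollification, the application of the Gaussian log-Sobolev inequality to $g = e^{\lambda f/2}$, the resulting differential inequality for $\frac{\log H(\lambda)}{\lambda}$, and the Chernoff optimisation at $\lambda = t/L^2$ all check out and give the sharp constant $e^{-t^2/(2L^2)}$ with the mean (not the median). Note that the paper does not prove this theorem itself; it simply cites \cite{concentration} as a black box, so there is no in-paper argument to compare against. The Herbst route you chose is one of the standard proofs found in that reference (alongside the Ornstein--Uhlenbeck semigroup interpolation of Pisier--Maurey and the Gaussian isoperimetry route of Tsirelson--Ibragimov--Sudakov), and your exposition of it is accurate, so this is fully consistent with the paper's level of rigor.
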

A proof of this theorem can be found in \cite{concentration}.

Note that this remains almost true if $(X_1, \ldots, X_N)$  is a centred normal vector with covariance matrix $\Sigma$, such that $\|\Sigma \|_{op} = 1 + o(1)$. Specifically, in this case, applying the theorem to the standard normal vector $\tilde{X} = \Sigma^{-\frac{1}{2}} X$ and to $g = f(\Sigma^{\frac{1}{2}} \cdot)$, 

\begin{equation}
    \Proba(f(X) - \E[f(X)] \geq t) = \Proba(g(\tilde{X}) - \E[g(\tilde{X})] \geq t) \leq 2e^{-\frac{t^2}{2 L^2}(1 + o(1))}
\end{equation}

as $g$ is a $L(1 + o(1))$-Lipschitz function.

Here, $\log Z$ is a function $F$ of the normal random variables $(G^{(e)}_i)_{e \in E, 1 \leq i \leq p}$; in order to apply this, we therefore want to evaluate $\nabla F$. Thus, if $e \in E$ and $1 \leq i \leq p$:

\begin{equation}
\begin{split}
    \partial_{(e, i)}F((G_e^{(i)})) & = \frac{1}{Z} \partial_{e, i} Z((G_e^{(i)})) \\
    & = \frac{1}{Z}  \sum_{\bigma \in \mathcal{S}_n^{p-1}} 2 \beta \left( \sum_{{j \neq i}} (G_{\sigma_{ij}(e)}^{(j)} - G_e^{(j)})  \right) e^{-\beta V(\bigma)} \\
    & = 2 \beta \sum_{e' \in E, j \neq i}G_{e'}^{(j)}\left( \1_{e = e'} + \1_{e \neq e'} \sum_{\underset{\sigma_{ij}(e) = e'}{\bigma \in \mathcal{S}_n^{p-1}}}  \frac{e^{-\beta V(\bigma)}}{Z} \right) \\
    & \defeq 2 \beta \sum_{e' \in E, j \neq i}G_{e'}^{(j)} \alpha_{(e, i), (e', j)}
    \end{split}
\end{equation}
(We set $\alpha_{(e, i), (e', j)} = 0$ when $i=j$.) Defined as such, we may notice that $0 \leq \alpha_{(e, i), (e, j)} \leq 1 $, and that for any fixed $e', j$ (resp. any fixed $e, i$),

\begin{equation}
    \sum_{e \in E, 1 \leq i \leq p} \alpha_{(e, i), (e', j)} = 2 \text{ resp. }\sum_{e' \in E} \alpha_{(e, i), (e', j)} = 2
\end{equation}

This means that, defining the matrix $A$ by $A_{(e, i), (e', j)} = \alpha_{(e, i), (e', j)}$, the matrix $\dfrac{A}{2 p}$ is doubly stochastic. We may use this in order to bound $\nabla F$:

\begin{equation}
\begin{split}
    \|\nabla F(G) \|^2 & = 4 \beta^2 \sum_{e \in E} \sum_{i=1}^p (\partial_{(e, i)}F(G))^2\\
     & = 4 \beta^2 \sum_{e \in E} \sum_{i=1}^p \left( \sum_{e' \in E, j \neq i}G_{e'}^{(j)} \alpha_{(e, i), (e, j)} \right)^2 \\
    & = 4 \beta^2 \| A G\|_F^2 \leq 16 p^2 \beta^2 \|G\|_F^2
\end{split}
\end{equation}
since $\|A\|_{op} \leq 2p$.

 As a result, for $M > 0$, we will define $p_M$ to be the orthogonal projection onto the $\| \cdot \|_F$ ball of radius $M$: if we truncate our function to obtain $F_M(G) = F(p_M(G))$ for $M > 0$, then $F_M$ is $(4 p \beta M)$-Lipschitz. In this instance, we will take $M = pn^2$.

Consequently, if $K \leq n^\frac{3}{2}$, for large enough $n$,

\begin{equation} \label{concentrationf}
\begin{split}
    \Proba(\log Z - \E[\log Z] > K) & \leq \Proba(\|G \|_{F} \geq M) + \Proba(F_M(G) - \E[F_M(G)] > K) \\
    & \leq \exp(-C n^2) + 2e^{- \frac{K^2}{16 p^2 \beta^2 M^2}(1 + o(1)) } \\
    & \leq 2\exp(- C' \frac{K^2}{n \log n})
\end{split}
\end{equation}
where $C, C'$ are independent from $n$.
\qed
{
\begin{remark} \label{zrpoint}
    We may prove the same theorem for $\log Z_r(\bi^*)$ instead of $\log Z$ in the same way; the changes required are minimal.
\end{remark}
}
\section{Proof of Lemma \ref{technical}} \label{technicalproof}
Set

\begin{equation}
    U = \{ \bigma \in \mathcal{S}_n^{p-1}, \forall 1 \leq i \neq j \leq p, d_{ij}(\bigma) \leq 2 \}
\end{equation}
Then, by Corollary \ref{usable}, $|U| \geq \frac{1}{2}(n!)^{p-1}$.
We will estimate 

\begin{equation}
    N = \# \{ \bigma \in U, \beta V_{\mathrm{off}}(\bigma) \leq -c (p-1) n \log n \}
\end{equation}

where $c = 2\sqrt{1 + \eta} > 0$.

Here, $V_{\mathrm{off}}(\bigma) = G^T M_{\bigma} G$ for some matrix $M_{\bigma}$ defined in (\ref{defmsigreal}) such that, for any $\bigma \in U$,

\begin{equation}
    \Tr((\Sigma M_{\bigma})^2) = p(p-1)\binom{n}{2}(1 + O(\rho)).
\end{equation}

 Thus, we may apply Proposition \ref{mass2}:

\begin{equation}
    \begin{split}
        \E[N] 
        & = |U| \Proba\left(V_{\mathrm{off}}(\bigma)\leq -\frac{c}{\beta} (p-1) n \log n \right) \\
        & \sim |U| \cdot \sqrt{\frac{8p}{c^2 \pi}} \sqrt{\frac{n}{\log n}} \exp \left( - \frac{c^2 (p-1) n \log n}{4 (1 + \eta)}  + O(\sqrt{n \log^3 n}) \right). \\
        & = \exp \left((p-1) n \log \frac{n}{e} - (p-1) n \log n + O(\sqrt{n \log^3 n}) \right) \\
        & = \exp \left( O(n) \right).
    \end{split}
\end{equation}

We would like to show that $\E[N^2] = \exp O(n)$. Assuming this, by the Paley-Zygmund inequality,

\begin{equation}
    \Proba(N > 0) \geq \frac{\E[N]^2}{\E[N^2]} = \exp(- O(n))
\end{equation}

However, if $N> 0$, there exists $\bigma \in U$ such that

\begin{equation}
    \begin{split}
        \log Z & \geq - \beta V(\bigma)  \\
        & \geq - \rho \beta p(p-1) \binom{n}{2} - \beta V_{\mathrm{off}}(\bigma) + \beta(V_{\mathrm{diag}}(\bigma) - \E[V_{\mathrm{diag}}(\bigma)]) \\
       & \geq - 2 (1 + \eta) (p-1) n \log n +   2 \sqrt{1 + \eta} (p-1) n \log n + \beta(V_{\mathrm{diag}}(\bigma) - \E[V_{\mathrm{diag}}(\bigma)]) \\
       & \geq - (p-1)\eta n \log n - (p-1) \eta^2 n \log n + \beta(V_{\mathrm{diag}}(\bigma) - \E[V_{\mathrm{diag}}(\bigma)]) \\
       & = (p-1)|\eta| n \log n - (p-1) n (\log n)^{\frac{3}{5}} + \beta(V_{\mathrm{diag}}(\bigma) - \E[V_{\mathrm{diag}}(\bigma)])
    \end{split}
\end{equation}

Furthermore, by Proposition \ref{weaklemma},
\begin{equation}
    \Proba(\exists \bigma \in \mathcal{S}_n^{p-1}, |V_{\mathrm{diag}}(\bigma) - \E[V_{\mathrm{diag}}(\bigma)]| \geq (\log n)^{\frac{1}{4}}n^{\frac{3}{2}}) = \Proba(\overline{\mathcal{C}}) \leq \exp (- c n \sqrt{\log n}(1 + o(1))).
\end{equation}
where $\overline{\mathcal{C}}$ is the complement of the event $\mathcal{C}$. This will mean that $\Proba((\log Z) \1_{\mathcal{C}} \geq - (p-1)\eta n \log n - (p-1) n (\log n)^{\frac{3}{5}} \geq \exp(- O(n)) - \Proba(\overline{\mathcal{C}}) = \exp(- O(n)) $, concluding the proof of Lemma \ref{technical}.

We now just need to show that $\E[N^2] = \exp \left(O(n) \right)$. Indeed, by Proposition \ref{mass2},

\begin{equation}
    \begin{split}
        \E[N^2] & = \sum_{\bigma, \bigma' \in U} \Proba\left(V_{\mathrm{off}}(\bigma)\leq -\frac{c}{\beta} (p-1) n \log n ,V_{\mathrm{off}}(\bigma') \leq - \frac{c}{\beta} (p-1) n \log n\right) \\
        & \leq  \sum_{\bigma, \bigma' \in U}\Proba\left(V_{\mathrm{off}}(\bigma) +V_{\mathrm{off}}(\bigma') \leq -2\frac{c}{\beta} (p-1) n \log n \right) \\
        & \leq \frac{\beta\sqrt{\Tr((\Sigma(M_{\bigma} + M_{\bigma'}))^2)}}{2\sqrt{\pi} c (p-1) n \log n}\exp \left(- \frac{c^2 n^2 \log^2 n}{\beta^2 \Tr((\Sigma(M_{\bigma} + M_{\bigma'}))^2)} + O(\sqrt{n \log^3 n}) \right)(1 + o(1)) \\
        & = \exp \left(- \frac{c^2 (p-1)^2 n^2 \log^2 n}{\beta^2 \Tr((\Sigma(M_{\bigma} + M_{\bigma'}))^2)} + O(n) \right)
    \end{split}
\end{equation}

To continue, we will note that, as we showed in (\ref{C17}), we may write

\begin{equation}
\begin{split}
    \Tr((\Sigma(M_{\bigma} + M_{\bigma'}))^2) &\leq 2p(p-1)\binom{n}{2} + 2\sum_{1 \leq i \neq j \leq p} \binom{c_{ij}(\bigma, \bigma')}{2} + O(\rho n^2) \\
    & \leq \left( p(p-1) n^2 + \sum_{1 \leq i \neq j \leq p} c_{ij}(\bigma, \bigma')^2 \right) (1 + O(\rho))
    \end{split}
\end{equation}

where $c_{ij}(\bigma, \bigma') = \# \{u \in \llb 1, n \rrb, \sigma_{ij}(u)=\sigma'_{ij}(u) \} = n \cdot ov(\pi_i^*\sigma_i^{-1} \sigma_i'(\pi_i^*)^{-1},\pi_j^*\sigma_j^{-1}\sigma'_j (\pi_j^*)^{-1})$. Note that this definition parallels that of $d_{ij}(\bigma) = n \cdot ov(\pi_i^* \sigma_i^{-1}, \pi_j^* \sigma_j^{-1})$; in particular, we will point out that, for fixed $\bigma \in \mathcal{S}_n^{p-1}$, if $0 \leq (c_{ij})_{i \neq j} \leq n$,

\begin{equation}
    \# \{ \bigma' \in \mathcal{S}_n^{p-1}, \forall 1 \leq i \neq j \leq p, c_{ij}(\bigma, \bigma') = c_{ij}\} = F(c_{ij})
\end{equation}

where $F$ was defined in Theorem \ref{permcount}. Thus:

\begin{equation}
    \begin{split}
        \E[N^2] & \leq \sum_{\bigma \in U} \sum_{\bigma' \in U} \exp \left(- \frac{2 (p-1) n \log n}{1 + \frac{1}{p(p-1) n^2}\sum_{1 \leq i \neq j \leq p} c_{ij}(\bigma, \bigma')^2} + O(n)\right) \\
        & \leq \sum_{\bigma \in \mathcal{S}_n^{p-1}} \sum_{\bigma' \in \mathcal{S}_n^{p-1}} \exp \left(- \frac{2 (p-1) n \log n}{1 + \frac{1}{p(p-1) n^2}\sum_{1 \leq i \neq j \leq p} c_{ij}(\bigma, \bigma')^2} + O(n)\right) \\
        & \leq (n!)^{p-1}\sum_{\bigma' \in \mathcal{S}_n^{p-1}}\exp \left(- \frac{2(p-1) n \log n}{1 + \frac{1}{p(p-1) n^2}\sum_{1 \leq i \neq j \leq p} d_{ij}(\bigma')^2} + O(n)\right) 
    \end{split}
\end{equation}
and so

\begin{equation}
    \begin{split}
        \E[N^2] & \leq (n!)^{p-1} \sum_{0 \leq (d_{ij})_{1 \leq i \neq j \leq p} \leq n} F(d_{ij}) \exp \left(- \frac{2 (p-1) n \log n}{1 + \frac{1}{p(p-1) n^2}\sum_{1 \leq i \neq j \leq p} d_{ij}^2} + O(n) \right) \\
    \end{split}
\end{equation}

However, instead of using Corollary \ref{usable} to bound $F(d_{ij})$, we will need to use the full Theorem \ref{permcount}. For any $\tau \in \mathcal{S}_p$,

\begin{equation}
\begin{split}
    \E[N^2 ] & \leq \sum_{0 \leq (d_{ij})_{1 \leq i \neq j \leq p} \leq n} \frac{(n!)^{p-1}}{(\ds\max_{j < 2} d_{\tau(j)\tau(2)})!\ldots (\ds\max_{j < p} d_{\tau(j) \tau(p)})!} \exp \left(- \frac{2 (p-1) n \log n}{1 + \frac{1}{p(p-1)n^2}\sum_{1 \leq i \neq j \leq p} d_{ij}^2} + O(n) \right) \\
    & \leq (n!)^{2(p-1)}\sum_{0 \leq (d_{ij})_{1 \leq i \neq j \leq p} \leq n}  \exp \left(-  \sum_{i=2}^p D_i(\tau) \log_+(D_i(\tau)) - \frac{2 (p-1) n \log n}{1 + \frac{1}{p(p-1)n^2}\sum_{1 \leq i \neq j \leq p} d_{ij}^2} + O(n) \right) 
    \end{split}
\end{equation}

where we have set $D_i(\tau) = \ds\max_{j < i} d_{\tau(j) \tau(i)}$ for $2 \leq i \leq p$.

Note that, in fact, $D_i(\tau) \log_+(D_i(\tau)) = D_i(\tau) \log n + O(n)$. Thus, in order to show that $\E[N^2] \leq e^{O(n)}$, we will prove that, for any values of $(d_{ij})$, there exists $\tau$ such that

\begin{equation}
     \sum_{i=2}^p D_i(\tau) \log n + \frac{2 (p-1) n \log n}{1 + \frac{1}{p(p-1)n^2}\sum_{1 \leq i \neq j \leq p} d_{ij}^2} \geq 2(p-1) n \log n.
\end{equation}
Setting $\alpha_{ij} = \frac{d_{ij}}{n}$ and $A_i(\tau) = \frac{1}{n}D_i(\tau)$, this is equivalent to showing that there exists $\tau$ such that

\begin{equation}
    \frac{1}{p-1} \sum_{i=2}^p A_i(\tau) \geq \frac{\frac{2}{p(p-1)}\sum_{1 \leq i \neq j \leq p} \alpha_{ij}^2}{1 + \frac{1}{p(p-1)}\sum_{1 \leq i \neq j \leq p} \alpha_{ij}^2}.
\end{equation}

This fact is true, but its proof is quite technical: we defer it to Appendix \ref{spanningtree}. Assuming this, we indeed have $\E[N^2] \leq e^{O(n)}$ and so Lemma \ref{technical} is proven.
\qed
\section{Proof of Lemma \ref{exact}}

The main difference between this argument and the proof of Proposition \ref{energy} is that, in this instance, we cannot ignore the fluctuations of $V_{\mathrm{diag}}$: we will be directly evaluating $\sum_{\bigma \in B(\bi^*, \eps)}\E[e^{- \beta V(\bigma)}]$. To this end, note that, for any $\bigma \in \mathcal{S}_n^{p-1}$, we may write $V(\bigma) = -G^T M'_{\bigma} G$, as a quadratic form over $\R^E \otimes \R^{p}$, where

\begin{equation} \label{defmsig}
    (M'_{\bigma})_{(e, i), (e', j)} =  \1_{i \neq j}(\1_{e'=\sigma_{ij}(e)} - \1_{e'=e})
\end{equation}

for any $e, e' \in E, 1 \leq i , j \leq p$. As we saw in Appendix \ref{quad2}, the covariance matrix $\Sigma$ of $G$ is equal to $(1 - \rho)\Id + \rho \tilde{J}$, where $\|\tilde{J}\|_{op} \leq p-1$.{ Therefore, for any $\bigma \in \mathcal{S}_n^{p-1}$, as in  (\ref{crude}),

\begin{equation}
    \begin{split}
        \Tr((\Sigma M_{\bigma}')^k) & = \Tr((\Sigma^{\frac{1}{2}} M_{\bigma}' \Sigma^{\frac{1}{2}})^k)\\
        &\leq \Tr(( \Sigma M_{\bigma}')^2)(2p)^{k-2}.\\
    \end{split}
\end{equation}

Furthermore, again by (\ref{traces}):
\begin{equation}
    \begin{split}
        \Tr(( \Sigma M_{\bigma}')^2) & = \Tr((M_{\bigma}')^2)(1 + o(1)) + 2 \rho (1-\rho) \Tr(\tilde{J} M_{\bigma}^2) + \rho^2\Tr((\tilde{J} M_{\bigma}')^2).
    \end{split}
\end{equation}
}

Here, by Lemma \ref{Dij}:

\begin{equation}
    \begin{split}
        \Tr((M_{\bigma}')^2) &= \sum_{(e, i), (e', j)} \1_{i \neq j}(\1_{e'=\sigma_{ij}(e)} - \1_{e'=e})^2 \\
        & = 2 \sum_{1 \leq i \neq j \leq p} \left( \binom{n}{2} - D_{ij}(\bigma)\right) \\
        & = 2 \sum_{1 \leq i \neq j \leq p} \left( \binom{n}{2} - \binom{d_{ij}(\bigma)}{2} \right)(1 + o(1)).
    \end{split}
\end{equation}
Similarly:

\begin{equation}
    \begin{split}
        \Tr((\tilde{J}M_{\bigma}')^2) & = \sum_{(e, i), (e', j), (e'', k)} \left(\sum_{l=1}^{p} \1_{l \neq k}(1_{e'' = \sigma_{lk}(e)} - 1_{e'' = e})\right)\left(\sum_{l=1}^{p} \1_{l \neq j}(1_{e' = \sigma_{lj}(e'')} - 1_{e' = e''})\right) \\
        & \leq 2 \sum_{(e, i), (e', j)} \sum_{1 \leq k, l, l' \leq p } \1_{\sigma_{lk}(e) \neq e} \1_{\sigma_{jl'}(e') \neq e'} \1_{e' \in \{ \sigma_{l'j}\sigma_{lk}(e), \sigma_{l'k}(e), \sigma_{lj}(e), e  \}} \\
        & \leq  2 \sum_{(e, i)} \sum_{1 \leq k, l, l' \leq p } \1_{\sigma_{lk}(e) \neq e}  \left( \sum_{ (e', j)}  \1_{e' \in \{ \sigma_{l'j}\sigma_{lk}(e), \sigma_{l'k}(e), \sigma_{lj}(e), e  \}} \right) \\
        & \leq 8 p^2 \sum_{e \in E}\sum_{1 \leq k, l \leq p } \1_{\sigma_{lk}(e) \neq e} \\\
        & = 4 p^2 (1 + o(1)) \Tr((M_{\bigma}')^2)
    \end{split}
\end{equation}

and so

\begin{equation}
    \Tr(( \Sigma M_{\bigma}')^2) =  2 \sum_{1 \leq i \neq j \leq p} \left( \binom{n}{2} - \binom{d_{ij}(\bigma)}{2} \right)(1 + o(1)).
\end{equation}

{
Finally, by Cauchy-Schwarz,

\begin{equation}
    |\Tr(\tilde{J} (M_{\bigma}')^2)| \leq \sqrt{\Tr((M_{\bigma}')^2)\Tr((\tilde{J} M_{\bigma}')^2)} = O(1) \Tr((M_{\bigma}')^2).
\end{equation}
}
As a result, if $\eps > 0$, by Lemma \ref{weakmass},

\begin{equation}
    \begin{split}
        \Proba& (\exists \bigma \in B(\bi^*, \eps) \setminus \{ \bi^*  \}, V(\bigma) \leq 0) \\
        &\leq \sum_{\bigma \in B(\bi^*, \eps)}\Proba(V(\bigma) \leq 0) \\
        & \leq \sum_{\bigma \in B(\bi^*, \eps)\setminus \{ \bi^*  \}} \exp \left( - \frac{\E[V(\bigma)]^2}{4 \Tr((\Sigma M_{\bigma}')^2)} \right) \\
        & = \sum_{\bigma \in B(\bi^*, \eps)\setminus \{ \bi^*  \}}\exp \left( - \frac{\rho^2}{8} \sum_{1 \leq i \neq j \leq p} \left( \binom{n}{2} - \binom{d_{ij}(\bigma)}{2} \right)(1 + o(1)) \right) \\
        & = \sum_{ \underset{(d_{ij}) \neq (n)}{(1 - p(p-1) \eps)n < (d_{ij})_{1 \leq i \neq j \leq p} \leq n}} F((d_{ij}))\exp \left( - \frac{\rho^2}{8} \sum_{1 \leq i \neq j \leq p} \left( \binom{n}{2} - \binom{d_{ij}}{2} \right)(1 + o(1)) \right).
    \end{split}
\end{equation}

since $B(\bi^*, \eps) \subseteq \{ \bigma \in \mathcal{S}_n^{p-1}, \forall 1 \leq i \neq j \leq p, d_{ij}(\bigma) > 1 - p(p-1) \eps \}$. By Corollary $\ref{usable}$, thus,

\begin{equation}
    \begin{split}
        \Proba& (\exists \bigma \in B(\bi^*, \eps) \setminus \{ \bi^*  \}, V(\bigma) \leq 0)\\
        & \leq \sum_{ \underset{(d_{ij}) \neq (n)}{(1 - p(p-1) \eps )n < (d_{ij}) \leq n}} \exp \left( \frac{1}{p} \sum_{1 \leq i \neq j \leq p} (n - d_{ij}) \log n - \frac{\rho^2}{8} \sum_{1 \leq i \neq j \leq p} \left( \binom{n}{2} - \binom{d_{ij}}{2} \right)(1 + o(1)) \right).
    \end{split}
\end{equation}

since $\frac{n!}{(d_{ij})!} \leq n^{n-d_{ij}} = e^{(n - d_{ij}) \log n}$. Setting $\alpha_{ij} = \dfrac{d_{ij}}{n}$, we may simplify this:

\begin{equation}
    \begin{split}
        \Proba& (\exists \bigma \in B(\bi^*, \eps) \setminus \{ \bi^*  \}, V(\bigma) \leq 0)\\
        & \sum_{ \underset{(d_{ij}) \neq (n)}{(1 - p(p-1) \eps )n < (d_{ij}) \leq n}}\leq \exp \left(\frac{1}{p} n \log n \sum_{1 \leq i \neq j \leq p} \left((1 - \alpha_{ij}) - (1 + \eta)(1 + o(1))\frac{1 - \alpha_{ij}^2}{2} \right)\right) \\
        & \leq \sum_{ \underset{(d_{ij}) \neq (n)}{(1 - p(p-1) \eps )n < (d_{ij}) \leq n}}\ \exp \left(\frac{1}{p} n \log n \sum_{1 \leq i \neq j \leq p} (1 - \alpha_{ij})\left(1 - (1 + \eta)(1 + o(1))\frac{2 - p(p-1)\eps}{2}\right) \right) \\
        & = \sum_{ \underset{(d_{ij}) \neq (n)}{(1 - p(p-1) \eps )n < (d_{ij}) \leq n}} \exp \left(- \frac{1}{p} n \log n c_{\eps} \sum_{1 \leq i \neq j \leq p} (1 - \alpha_{ij}) \right) = o(1)
    \end{split}
\end{equation}

where $c_{\eps} > 0$ as soon as $\eps < \frac{\eta}{p(p-1)}$.
\qed

{
\section{Proof of Theorem \ref{mainthm} continued: extra technical step}\label{sec}

Recall that we need to prove that

\begin{equation} \label{extranec}
    \max_{\bi \in \mathcal{S}_n^{p-1}} Z_r(\bi) \ll_{\Proba} Z
\end{equation}
where $r < 1$ has been fixed.

Define $\bi^e = \underset{\bi \in \mc{S}_n^{p-1}}{\arg \max} Z_r(\bi)$. We also set $\mc{Y} = \sigma((\bi^*)^{-1}(G))$ the $\sigma$-algebra of all observable random variables.

Let $\eps > 0$, and $\mc{X}_\eps$ be the event $\{Z_r(\bi^e) \geq \eps Z\}$. In this sense, showing (\ref{extranec}) is equivalent to showing that $\Proba(\mc{X}_\eps) = o(1)$ for any such $\eps$.

We begin with the following lemma.

\begin{lemma}
\begin{equation}
    \Proba(\bi^e \in B(\bi^*, r)|\mc{X}_\eps) \geq \eps.
\end{equation}
\end{lemma}
\begin{proof}
    We compute:

    \begin{equation}
        \begin{split}
            \Proba(\bi^e \in B(\bi^*, r) \wedge \mc{X}_\eps) & = \E[\1_{\bi^e \in B(\bi^*, r)} \1_{Z_r(\bi^e) \geq \eps Z}] \\
            & = \E[\1_{\bi^* \in B(\bi^e, r)} \1_{\Proba_{post}(B(\bi^e, r)) \geq \eps}] \\\
            & = \E\left[\E\left[\1_{\bi^* \in B(\bi^e, r)} |  \mc{Y} \right] \1_{\Proba_{post}(B(\bi^e, r)) \geq \eps} \right] \\
            & = \E\left[\Proba_{post}(B(\bi^e, r)) \1_{\Proba_{post}(B(\bi^e, r)) \geq \eps} \right] \\
            & \geq \eps \Proba(\mc{X}_\eps).
        \end{split}
    \end{equation} \qed
\end{proof}

This means that, defining the new event $\mc{X}_\eps^{int} = \{Z_r(\bi^e) \geq \eps Z\} \wedge \{ \bi^e \in B(\bi^*, r)\})$, it is sufficient for us to show that $\Proba(\mc{X}_\eps^{int}) = o(1)$.
To achieve this, we will show the following lemma.

\begin{lemma} \label{awful}
    There exists an estimator $\hat{Y} \in \mc{P}(\mc{S}_n^{p-1})$ such that

\begin{equation} \label{toshow1}
    \Proba(\bi^* \in \hat{Y} \subseteq B(\bi^*, r) | \mc{X}_\eps^{int}) \geq \exp(-O(n)).
\end{equation}
\end{lemma}

Assume this lemma. Then,

\begin{equation}
    \Proba(\bi^* \in \hat{Y} \subseteq B(\bi^*, r)) \geq \Proba(\mc{X}_\eps^{int}) \exp(-O(n)).
\end{equation}

However, we know that

\begin{equation}
    \begin{split}
        \Proba(\bi^* \in \hat{Y} \subseteq B(\bi^*, r)) & = \E[\1_{\bi^* \in \hat{Y}}\1_{\hat{Y} \subseteq B(\bi^*, r)}] \\
        & \leq \E[\1_{\bi^* \in \hat{Y}} \1_{\Proba_{post}(\hat{Y}) \leq \frac{Z_r(\bi^*)}{Z}}]\\
        & \leq \Proba(\frac{Z_r(\bi^*)}{Z} \geq \delta_n) + \E[\1_{\bi^* \in \hat{Y}} \1_{\Proba_{post}(\hat{Y}) \leq \delta_n}]
    \end{split}
\end{equation}

where $\delta_n = \exp(-n (\log n)^{\frac{3}{5}})$.

By applying Lemma \ref{concentration} (as well as Remark \ref{zrpoint}), we may see that for some $c > 0$,

\begin{equation}
    \Proba(\frac{Z_r(\bi^*)}{Z} \geq \delta_n) \leq 2\exp(-c n (\log n)^{\frac{1}{5}})
\end{equation}

since $\E[\log Z] \sim (p-1) n (\log n)^{\frac{4}{5}}$ and, by the same argument as in (\ref{idkgarbage}),  

\begin{equation}
    \E[\log Z_r(\bi^*)] \sim \E[\log Z_r(\bi^*)\1_{\mc{C}}] \leq c_r (p-1) n (\log n)^{\frac{4}{5}}(1 + o(1))
\end{equation}
for some $0 \leq c_r < 1$.

Furthermore,

\begin{equation}
    \begin{split}
        \E[\1_{\bi^* \in \hat{Y}} \1_{\Proba_{post}(\hat{Y}) \leq \delta_n}] & = \E \left[\E \left[\1_{\bi^* \in \hat{Y}} |\mc{Y}\right] \1_{\Proba_{post}(\hat{Y}) \leq \delta_n}\right] \\
        & = \E[\Proba_{post}(\hat{Y})\1_{\Proba_{post}(\hat{Y}) \leq \delta_n}] \\
        & \leq \delta_n.
    \end{split}
\end{equation}
 This all implies that 

\begin{equation}
    \Proba(\mc{X}_\eps^{int}) \leq \exp O(n) (\exp(-c n (\log n)^{\frac{1}{5}}) + \delta_n) = o(1)
\end{equation}

which is what we wanted. 

We now just need to prove Lemma \ref{awful}.

\begin{proof}[Proof of Lemma \ref{awful}]
    Pick $I_1, \ldots,  I_p \subseteq \llb 1, n \rrb$ to be uniformly random subsets of $\llb 1, n \rrb$, independently from each other and everything else; we then define

    \begin{equation}
        \hat{Y} = \{ \bi \in \mc{S}_{n}^{p-1}, \forall 1 \leq l \leq p, \forall i \in I_l, \bi_l(i) = \bi_l^e(i) \}.
    \end{equation}
    Assume that $\mc{X}_\eps^{int}$ holds. Then, if $I_l = \{ i \in \llb 1, n \rrb, \bi^e_l(i) = \bi_l^*(i) \}$ for any $1 \leq l \leq p$ (which occurs with probability at least $\exp (- O(n))$), we will in fact have

    \begin{equation}
        \bi^* \in \hat{Y} \subseteq B(\bi^*, r).
    \end{equation}

    Indeed, given that $\mc{X}_\eps^{int}$ is verified, we may set $J = \bigcap_{1 \leq l \leq p} I_l$, and $|J| \geq (1-r)n$; thus, if $\bi \in \hat{Y}$,

    \begin{equation}
        ov(\bi, \bi^*) \geq \frac{|J|}{n} = 1-r
    \end{equation}

    and $\hat{Y} \subseteq B(\bi^*, r)$.
\end{proof}}
\section{Proof of Lemma \ref{auttrees}}

We begin by assuming that $H$ is connected. We will prove by recursion that, for any $1 \leq i \leq |V|$, there exists  $X \subseteq V$ with $|X| \geq  i$ such that the cardinal of  $\{ \sigma|_{X}, \sigma \in \Aut H \}$ is at most equal to

\begin{equation}
     |V|\prod_{\underset{N_u \subseteq X}{u \in X}} (d_u)!
\end{equation}

where, for any vertex $u$, $N_u$ is the set of neighbours of $u$.

This is clear for $i=1$. Furthermore, if we have such a set $X \neq V$, then there exist $u \in X$ such that $N_u$ is not contained within $X$. However, if $\sigma \in \Aut H$, $\sigma(N_{u}) = N_{\sigma(u)}$ and so, given the value of $\sigma(u)$, there are at most $d_u!$ possible values for $\sigma|_{N_u}$. Thus, the property remains true for $X' = X \cup N_u$. By recursion, this concludes the case where $H$ is connected.

Now, we move on to the general case: assume that $H$ splits into connected components $(H_i)_{1 \leq i \leq l}$ with vertex sets $(V_i)_{1 \leq i \leq l}$. From the connected case, we deduce that the number of automorphisms of $H$ which send each $H_i$ to itself, is at most equal to 

\begin{equation}
    \left(\prod_{i=1}^l |V_i| \right)\prod_{v \in V}(d_v!).
\end{equation}

However, since $\sum_{i=1}^l |V_i| = |V|$, by the AM-GM inequality,

\begin{equation}
    \prod_{i=1}^l |V_i| \leq \left(\frac{1}{l} \sum_{i=1}^l |V_i|\right)^l = \left(\left(\frac{|V|}{l} \right)^{\frac{l}{|V|}} \right)^{|V|} \leq \left(e^{\frac{1}{e}} \right)^{|V|},
\end{equation}
concluding the proof.
\qed
\section{An extra theorem} \label{spanningtree}

At the end of Appendix \ref{technicalproof}, we make use of an inequality which we shall now prove.

\begin{theorem}\label{spanthm}
    Let $0 \leq (x_{ij})_{1 \leq i \neq j \leq p} \leq 1$ be real numbers with $x_{ij} = x_{ji}$ for all $i \neq j$. If $\tau \in \mathcal{S}_p$, and $2 \leq i \leq p$, define $X_{i}(\tau) = \max_{j<i} x_{\tau(j) \tau(i)}$. We also set

    \begin{equation}
        S_*(\tau) = S_*(\tau, (x_{ij})) \defeq \frac{1}{p-1}\sum_{i=2}^p X_{i}(\tau)
    \end{equation}
    
    Then, there exists $\tau \in \mathcal{S}_p$ such that

    \begin{equation}
        S_*(\tau) \geq \frac{\dfrac{2}{p(p-1)}\dsum_{1 \leq i \neq j \leq p} x_{ij}^2}{1 + \dfrac{1}{p(p-1)}\dsum_{1 \leq i \neq j \leq p} x_{ij}^2}
    \end{equation}
\end{theorem}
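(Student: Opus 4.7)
I would prove the bound by taking $\tau$ to be the Prim ordering of a maximum spanning tree $T^\ast$ of $K_p$ weighted by $(x_{ij})$. For this choice, $(p-1)\,S_*(\tau)=w(T^\ast)$. Writing $w=w(T^\ast)$ and $Z=\sum_{i<j}x_{ij}^2$ (so $\bar y=2Z/(p(p-1))$), the target inequality $S_*(\tau)\geq\frac{2\bar y}{1+\bar y}$ rearranges to the algebraic statement
\[
Z\;\leq\;\frac{w\,p(p-1)}{2\bigl(2(p-1)-w\bigr)}.
\]
Thus the task reduces to a purely graph-theoretic bound on the sum of squared edge weights in terms of the MST weight.

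The main tool is the defining property of a maximum spanning tree: for every pair $(i,j)$, $x_{ij}\leq M_{ij}$, where $M_{ij}$ is the minimum weight on the $T^\ast$-path from $i$ to $j$; hence $Z\leq\sum_{i<j}M_{ij}^2$. Sorting the tree edges as $w_1\geq\cdots\geq w_{p-1}$ (Kruskal's order on $T^\ast$), the pairs with $M_{ij}=w_k$ are exactly those first connected at the $k$-th Kruskal step, and their count $m_k=a_k b_k$ is the product of the sizes of the two components then merged. This gives $\sum_{i<j}M_{ij}^2=\sum_{k=1}^{p-1}w_k^2\,m_k$, with $\sum_k m_k=\binom{p}{2}$. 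After $j$ merges, the partial forest has $p-j$ components of sizes $s_1,\ldots,s_{p-j}$ summing to $p$, so the partial sum $M_j:=\sum_{k\leq j}m_k=\sum_l\binom{s_l}{2}$ is maximized by one component of size $j+1$ and singletons, yielding the key cumulative bound $M_j\leq\binom{j+1}{2}$.

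With $(w_k^2)$ decreasing and $M_j\leq\binom{j+1}{2}$, Abel summation (and the telescoping identity $\binom{k+1}{2}-\binom{k}{2}=k$) gives
\[
\sum_{k=1}^{p-1}w_k^2\,m_k\;\leq\;\sum_{k=1}^{p-1}k\,w_k^2.
\]
It remains to prove $\sum_k k\,w_k^2\leq\frac{w\,p(p-1)}{2(2(p-1)-w)}$ on the polytope $\mathcal{P}_w$ of decreasing sequences $(w_k)\in[0,1]^{p-1}$ with $\sum_k w_k=w$. Since $\sum_k k\,w_k^2$ is convex in $(w_k)$, its maximum on $\mathcal{P}_w$ is attained at a vertex, which by the monotonicity and box constraints must take the form $w_1=\cdots=w_j=1$, $w_{j+1}=t\in[0,1)$, $w_k=0$ for $k>j+1$, with $j+t=w$.

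The main obstacle is the resulting vertex-level inequality. For each integer $j\in\{0,\ldots,p-2\}$, substitution reduces it to showing that the cubic
\[
g(t)\;=\;(j+1)\bigl(j+2t^2\bigr)\bigl(2(p-1)-j-t\bigr)\;-\;(j+t)\,p(p-1)
\]
is nonpositive on $t\in[0,1]$. A direct second-derivative computation yields $g''(t)=(j+1)\bigl(8(p-1)-4j-12t\bigr)\geq 0$ on $[0,1]$ whenever $p\geq 3$ and $j\leq p-2$ (with the degenerate case $p=2$ handled separately, where $g(t)=-2t(t-1)^2\leq 0$), so $g$ is convex there. Explicit factorizations at the endpoints give
\[
g(0)=-j(p-1-j)(p-2-j)\leq 0,\qquad g(1)=-(j+1)(p-2-j)(p-3-j)\leq 0,
\]
and convexity then forces $g(t)\leq(1-t)g(0)+t\,g(1)\leq 0$ on $[0,1]$. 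The tight cases encountered along the way---all tree weights equal to $1$, or a single heavy edge for $p\in\{2,3\}$---confirm that the chain of reductions (max-spanning-tree property $\to$ Kruskal cumulative bound $\to$ Abel summation $\to$ extreme-point analysis) is sharp exactly where it needs to be.
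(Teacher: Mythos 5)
Your approach is genuinely different from the paper's. The paper proceeds by induction on $p$, isolating the smallest edge $x$ of a maximum spanning tree, partitioning $\llb 1,p\rrb$ across a cut where all cross-weights are $\leq x$, and reducing to a two-variable algebraic inequality $f(S_1,S_2)\geq 0$ which is verified by concavity and corner-checking. You instead use the cycle property of the MST ($x_{ij}\leq M_{ij}$), a Kruskal-time decomposition of $\sum_{i<j} M_{ij}^2$, a cumulative bound $M_j\leq\binom{j+1}{2}$, Abel summation to reach $\sum_k k\,w_k^2$, and then a convexity/extreme-point argument. The chain up through the Abel step is correct, and the algebraic reformulation $Z\leq \frac{w\,p(p-1)}{2(2(p-1)-w)}$ is also right.

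However, the extreme-point analysis has a genuine gap: the vertices of
\begin{equation}
\mathcal{P}_w=\bigl\{\,w\in\R^{p-1}:\ 1\geq w_1\geq\cdots\geq w_{p-1}\geq 0,\ \textstyle\sum_k w_k=w\,\bigr\}
\end{equation}
are \emph{not} all of the form ``$j$ ones, one fractional entry, then zeros.'' Because of the chain constraints $w_k\geq w_{k+1}$, a vertex is obtained by choosing $p-2$ active constraints from $\{w_1=1,\ w_1=w_2,\ \ldots,\ w_{p-2}=w_{p-1},\ w_{p-1}=0\}$; this leaves at most two breakpoints and therefore produces vertices consisting of an initial block of ones, a \emph{middle block of arbitrary length} $m\geq 1$ all at some level $t\in(0,1)$, and a tail of zeros, with $j+mt=w$. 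You have implicitly assumed $m=1$. Concretely, for $p=4$ and $w=3/2$, the point $(3/4,3/4,0)$ is a vertex of $\mathcal{P}_w$ (active constraints $w_1=w_2$ and $w_3=0$), it is not of the claimed form, and it gives $\sum_k k w_k^2=27/16$, strictly larger than the value $24/16$ at your claimed vertex $(1,1/2,0)$. So the maximum over $\mathcal{P}_w$ is not attained where your cubic $g(t)$ is evaluated, and the verification $g(t)\leq 0$ does not close the argument. The inequality does still hold at $(3/4,3/4,0)$ (since $27/16<2=\tfrac{wp(p-1)}{2(2(p-1)-w)}$), so the route may be salvageable, but you would need to redo the vertex classification to include the two-parameter family $(j,m)$ and verify the resulting inequality for all admissible $(j,m,t)$, which is a substantially larger case analysis than the single cubic you handle.
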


\begin{remark} \label{spanrk}
    Consider the complete graph $K_p$ over $\llb 1, p \rrb$; we put the weight $x_{ij}$ on the edge linking $i$ to $j$, obtaining a graph $\mathcal{G}$. We construct a  spanning tree of $\mathcal{G}$, by:
    \begin{itemize}
        \item picking $\tau(1)$;
        \item picking $\tau(2)$ and connecting it to $\tau(1)$;
        \item picking $\tau(3)$ and connecting it to $\tau(1)$ or $\tau(2)$ (depending on whether $x_{\tau(1) \tau(3)}$ or $x_{\tau(2) \tau(3)}$) is larger;
        \item and continuing the process in this fashion.
    \end{itemize}
    $X_i(\tau)$ is the $(i-1)$th edge constructed by this process, and so $(p-1)S_*(\tau)$ is the total weight of the tree. Furthermore, if $\tau$ is chosen correctly, this is simply the application of Prim's algorithm; thus, $\max_{\tau \in \mathcal{S}_p} (p-1)S_*(\tau)$ is the total weight of the maximal spanning tree of $\mathcal{G}$. We are therefore claiming that there exists a spanning tree of $\mathcal{G}$, with weight at least

    \begin{equation}
        (p-1) \cdot \frac{ \dfrac{2}{p(p-1)}\dsum_{1 \leq i \neq j \leq p}x_{ij}^2}{1 + \dfrac{1}{p(p-1)}\dsum_{1 \leq i \neq j \leq p}x_{ij}^2}.
    \end{equation}
\end{remark}

\begin{proof}
    We will proceed by recursion over $p \geq 1$. If $p=1$, there is nothing to prove. From now on, we assume that $p \geq 2$.

    Let $x$ be the minimum weight of an edge contained in the maximal spanning tree.
    Then, the graph over $\llb 1, p \rrb$ with edges $\{ \{i\neq j \}, x_{ij} > x \}$ cannot be connected; otherwise, Prim's algorithm would give us a maximal spanning tree with edges whose weights are strictly larger than $x$. Thus, we may partition $\llb 1, p \rrb$ into two (nonempty) subsets $B_1, B_2 \subseteq \llb 1, p \rrb$ such that $x_{ij} \leq x$ for $i \in B_1, j \in B_2$, and there exists a maximal spanning tree which is the union of 
    \begin{itemize}
        \item a maximal spanning tree of the induced subgraph of $\mathcal{G}$ over $B_1$;
        \item a maximal spanning tree of the induced subgraph of $\mathcal{G}$ over $B_2$;
        \item an edge with weight $x$.
    \end{itemize}
Also note that we may assume, without loss of generality, that $x_{ij} \geq x$ for any $i \neq j \in B_1$ or $i \neq j \in B_2$. Indeed, even after increasing the $x_{ij}$ to satisfy this condition, applying Prim's algorithm (with suitable tiebreakers) will still give us the same spanning tree, (since all of the edges of the spanning tree are $\geq x$) but the quantity $\frac{2 \sum_{1 \leq i \neq j \leq p}x_{ij}^2}{1 + \sum_{1 \leq i \neq j \leq p}x_{ij}^2}$ will have increased.
    
    Noting $|B_1| = k_1$ and $|B_2| = k_2 = p-k_1$, this means that

    \begin{equation}
        \begin{split}
            (p-1)S_* & = (k_1-1)S_*((x_{ij})_{i,j \in B_1}) + (k_2 - 1)S_*((x_{ij})_{i,j \in B_2}) + x. \\
        \end{split}
    \end{equation}
However, by induction, we have

    \begin{equation}
         (k_1-1)S_*((x_{ij})_{i,j \in B_1}) + (k_2 - 1)S_*((x_{ij})_{i,j \in B_2})\geq (k_1 - 1) \frac{2 S_1}{1 + S_1} + (k_2 - 1) \frac{2 S_2}{1 +S_2}
    \end{equation}
    where $S_l = \frac{1}{k_l(k_l - 1)} \dsum_{i \neq   j \in B_l} x_{ij}^2$ for $l = 1$ or $2$. 
    (Note that, in our case, $S_l \geq x^2$.) In order to conclude, we thus simply need to show that

    \begin{equation} \label{K7}
        \begin{split}
            (k_1 - 1) \frac{2 S_1}{1 + S_1} + (k_2 - 1) \frac{2 S_2}{1 +S_2} + x & \geq  (p-1)\frac{2 \frac{1}{p(p-1)} \sum_{1 \leq i \neq j \leq p} x_{ij}^2}{1 + \frac{1}{p(p-1)} \sum_{1 \leq i \neq j \leq p} x_{ij}^2} \\
        \end{split}
    \end{equation}

    To simplify notations, we define
    \begin{equation}
        \begin{split}
            \alpha_1 = \frac{k_1 - 1}{p-1}, \alpha_2 = \frac{k_2 - 1}{p-1}, \alpha_3 = \frac{1}{p-1}
        \end{split}
    \end{equation}
    \begin{equation}
        \beta_1 = \frac{k_1(k_1 - 1)}{p(p-1)}, \beta_2 = \frac{k_2(k_2 - 1)}{p(p-1)}, \beta_3 = \frac{2k_1k_2}{p(p-1)}
    \end{equation}
    such that $\alpha_1 + \alpha_2 + \alpha_3 = \beta_1 + \beta_2 + \beta_3 = 1$. By definition of our blocks, we have:

    \begin{equation}
        \frac{1}{p(p-1)} \sum_{1 \leq i \neq j \leq p} x_{ij}^2 \leq \beta_1 S_1 + \beta_2 S_2 + \beta_3 x^2
    \end{equation}

    and so, in order to show (\ref{K7}), we just need to show that

    \begin{equation}
        \alpha_1 \frac{2 S_1}{1 + S_1} + \alpha_2 \frac{2 S_2}{1 + S_2} + \alpha_3 x \geq \frac{2(\beta_1 S_1 + \beta_2 S_2 + \beta_3 x^2)}{1 + \beta_1 S_1 + \beta_2 S_2 + \beta_3 x^2}
    \end{equation}
    
    Expanding out, the above inequality reduces to:

    \begin{equation}
    \begin{split}
        f(S_1, S_2) &\defeq (2\alpha_1 S_1(1 + S_2) + 2\alpha_2 S_2(1 + S_1) + \alpha_3 x (1 + S_1)(1 + S_2))(1 + \beta_1 S_1 + \beta_2 S_2 + \beta_3 x^2) \\
        & - (2\beta_1 S_1 + 2\beta_2 S_2 + 2 \beta_3 x^2)(1 + S_1)(1 + S_2) \\
        & \geq 0.
        \end{split}
    \end{equation}

    We will show that:

    \begin{itemize}
        \item for $x \leq S_1, S_2 \leq 1$, $\dfrac{\partial^2 f}{\partial S_i^2}(S_1, S_2) \leq 0$, i.e. $f$ is concave in both arguments;
        \item $f(x^2, 1) \geq 0$, $f(1, x^2) \geq 0$, $f(1, 1) \geq 0$, and $f(x^2, x^2) \geq 0$,
    \end{itemize}
    proving that indeed $f(S_1, S_2) \geq 0$ for $x^2 \leq S_1, S_2 \leq 1$.

    First of all,

    \begin{equation}
        \begin{split}
            \dfrac{\partial^2 f}{\partial S_1^2}(S_1, S_2) = 2 \alpha_1 \beta_1 (1 + S_2) - 2 \beta_1 (1 + S_2) \leq 0.
        \end{split}
    \end{equation}
    and, by symmetry, $ \dfrac{\partial^2 f}{\partial S_2^2} \leq 0$.
    Secondly, expanding out $f$,
        \begin{equation}
            \begin{split}
                f(x^2, 1) & = 2 \alpha_{3} x(1 - x)^2 + 2 \alpha_3 (1 - \beta_2) x^3 (1 - x)^2 \\
                & + 2 \alpha_2(1 + \beta_2) - 4 \beta_2 + 2 \alpha_3 \beta_2 x + 4 \alpha_1 \beta_2 x^2 + 2 \alpha_3 \beta_2 x^3 - 2 \alpha_2(1 - \beta_2)x^4 \\
                & \geq 2 \alpha_2(1 + \beta_2) - 4 \beta_2 + 2 \alpha_3 \beta_2 + 4 \alpha_1 \beta_2  + 2 \alpha_3 \beta_2  - 2 \alpha_2(1 - \beta_2) = 0
            \end{split}
        \end{equation}
        since $2 \alpha_2(1 + \beta_2) - 4 \beta_2 = \frac{2k_1(k_2 - 1)}{p(p-1)} \geq 0$. 
        
        Thirdly, by symmetry, $f(1, x^2) \geq 0$.
        
        Fourthly,
        \begin{equation}
            \begin{split}
                f(1, 1) & = 4(1 - \alpha_3(1 - x))(2 - \beta_3(1-x^2)) - 8(1 - \beta_3(1 - x^2)) \\
                & = 8 \frac{k_1 k_2}{p(p-1)}(1 - x^2) - \frac{8}{p-1}(1-x) + \frac{8 k_1 k_2}{p(p-1)^2}(1-x)(1-x^2) \\
                & = \frac{8}{p(p-1)^2}(k_1(p-k_1)(p-1)(1-x^2) - p(p-1)(1-x) + k_1(p - k_1)(1-x)(1-x^2))
            \end{split}
        \end{equation}
        If $k_1 \geq 2$ and $k_2 = p-k_1 \geq 2$, then $k_1(p-k_1) \geq p$ and $f(1, 1) \geq 0$. Otherwise $\{k_1, k_2 \} = \{1, p-1 \}$, and 

        \begin{equation}
            \begin{split}
                f(1, 1) & = \frac{8}{p(p-1)^2}((p-1)^2(1-x^2) - p(p-1)(1-x)+(p-1)(1-x)(1-x^2)) \\
                & = \frac{8}{p(p-1)^2}(p-1)(p-2)x^2(1-x) \geq 0.
            \end{split}
        \end{equation}
        Finally, 

        \begin{equation}
            \begin{split}
                f(x^2, x^2) & = (1 + x^2)^2(2(\alpha_1 + \alpha_2)x^2 + \alpha_3 x(1 + x^2) - 2 x^2) \\
                & = (1 + x^2)^2 \alpha_3 x (1-x)^2 \geq 0.
            \end{split}
        \end{equation}
    \qed
\end{proof}


\section*{Acknowledgements}
The authors would like to thank Mohammad Hassan Ahmad Yarandi and Luca Ganassali for their helpful comments on earlier versions of the article.

\fund 
There are no funding bodies to thank relating to this creation of this article.

\competing 
There were no competing interests to declare which arose during the preparation or publication process of this article.


%
%
%

\end{document}